\newif\ifarxiv
\DeclareFontFamily{OML}{rsfs}{\skewchar\font'177}
\DeclareFontShape{OML}{rsfs}{m}{n}{ <5> <6> rsfs5 <7> <8> <9> rsfs7
<10> <10.95> <12> <14.4> <17.28> <20.74> <24.88> rsfs10 }{}
\DeclareMathAlphabet{\mathfs}{OML}{rsfs}{m}{n}
\newtheorem{prop}{Proposition}[section]
\newtheorem{thm}[prop]{Theorem}
\newtheorem{lem}[prop]{Lemma}
\newtheorem{conj}[prop]{Conjecture}
\newtheorem{defn}[prop]{Definition}
\newtheorem{rem}[prop]{Remark}
\newtheorem{exam}[prop]{Example}
\newtheorem{clm}[prop]{Claim}
\newtheorem{ques}[prop]{Question}
\newtheorem{ass}[prop]{Assumption}
\newcommand{\BE}{{\mathbb{E}}}
\newcommand{\BN}{{\mathbb{N}}}
\newcommand{\BP}{{\mathbb{P}}}
\newcommand{\BR}{{\mathbb{R}}}
\newcommand{\BZ}{{\mathbb{Z}}}
\newcommand{\FB}{{\mathfrak{B}}}
\newcommand{\CC}{{\mathcal{C}}}
\newcommand{\CE}{{\mathcal{E}}}
\newcommand{\CP}{{\mathcal{P}}}
\newcommand{\CQ}{{\mathcal{Q}}}
\newcommand{\CS}{{\mathcal{S}}}
\newcommand{\Fp}{{\mathfrak{p}}}
\newcommand{\ind}{{\mathbbm{1}}}
\newcommand{\bae}{\begin{equation}\begin{aligned}}
\newcommand{\eae}{\end{aligned}\end{equation}}
\newcommand{\om}{{\omega}}
\newcommand{\Om}{{\Omega}}
\newcommand{\si}{{\sigma}}
\newcommand{\gth}{{\theta}}
\newcommand{\ep}{{\varepsilon}}
\begin{document}

\numberwithin{equation}{section} \numberwithin{figure}{section}
\newtheorem*{theo}{Theorem}
\newtheorem*{propo}{Proposition}

\title{Random Walks on Discrete Point Processes}
\author{Noam Berger$^1$}
\thanks{$^1$The Hebrew University of Jerusalem and TU Munich}
\author{Ron Rosenthal$^2$}
\thanks{$^2$ETH Z\"urich}

\maketitle

\begin{abstract}
    We consider a model for random walks on random environments (RWRE) with a random subset of $\BZ^d$ as the vertices, and uniform transition probabilities on
    $2d$ points (two "coordinate nearest neighbors" in each of the $d$ coordinate directions). We prove that the velocity of such random walks is almost surely
    zero, give partial characterization of transience and recurrence in the different dimensions and prove Central Limit Theorem (CLT) for such random walks,
    under a condition on the distance between coordinate nearest neighbors.
\end{abstract}



\section{Introduction}


\subsection{Background}
$~$\\

Random walks on random environments is the object of intensive mathematical research for more than 3 decades. It deals with models from condensed matter physics, physical chemistry, and many other fields of research. The common subject of all models is the investigation of particles movement in inhomogeneous media. It turns out that the randomness of the media (i.e.~the environment) is responsible for some unexpected results, especially in large scale behavior. In the general  case, the random walk takes place in a countable graph $(V,E)$, but the most investigated models deals with the graph of the $d$-dimensional integer lattice, i.e., $\BZ^d$. For some of the results on those models see \cite{Ze03}, \cite{Sz00}, \cite{Hu96} and \cite{Re05}. The definition of RWRE involves two steps: First the environment is randomly chosen according to some given distribution, then the random walk, which takes place on this fixed environment, is a Markov chain with transition probabilities that depend on the environment. We note that the environment is kept fixed and does not evolve during the random walk, and that the  random walk, given the environment, is not necessarily reversible. The questions on RWRE come in two major flavors: Quenched, in which the walk is distributed according to a given typical environment, and annealed, in which the distribution of the walk is taken according to an average on the environments. There are two main differences between the quenched and the annealed laws: First the quenched is Markovian, while the annealed distribution is usually not. Second, in most models there is some additional assumption of translation invariance of the environments, which implies that the annealed law is translation invariance, while  the quenched law is not.

In contrast to most of the models for RWRE on $\BZ^d$, this work deals with non nearest neighbor random walks. In our case this is most expressed in the estimation of $\BE\left[|X_n|\right]$. Unlike nearest neighbor models we don't have an a priori estimation on the distance made in one step. Nonetheless using an ergodic theorem by Nevo and Stein we managed to bound the above and therefore to show that the estimation $\BE[|X_n|]\leq c(\om)\sqrt{n}$ still holds. The subject of non nearest neighbor random walks has not been systematically studied. For results on long range percolation see \cite{Be01}. For literature on the subject in the one dimensional case see \cite{BG08}, \cite{Br02} and \cite{CS09}. For some results on bounded non nearest neighbors see \cite{Ke84}. For some results that are valid in that general case see \cite{Va02}. For recurrence and transience criteria CLT and more for random walks on random point processes, with transition probabilities between every two points decaying in their distance, see \cite{CFG8} and the references therein. Our model also has the property that the random walk is reversible. For some of the results on this topic see \cite{BBHK}, \cite{BP07}, \cite{MP07} and \cite{sidoravicius2004quenched}.


\subsection{The Model}
$~$\\

We start by defining the random environment of the model which will be a random subset of $\BZ^d$, the $d$-dimensional lattice of integers (we also refer to such random environment as a random point process). Denote $\Om=\{0,1\}^{\BZ^d}$ and let $\mathfrak{B}$ be the Borel $\si$-algebra (with respect to the product topology) on $\Om$. For every $x\in \BZ^d$ let $\gth_x:\Om\rightarrow\Om$ be the shift along the vector $x$, i.e.~for every $y\in\BZ^d$ and every $\om\in\Om$ we have $\gth_x(\om)(y)=\om(x+y)$. In addition let $\CE=\CE(d)=\{\pm e_i\}_{i=1}^d$, where $e_i$ is a unit vector along the $i^{th}$ principal axes.

Throughout this paper we assume that $Q$ is a probability measure on $\Om$ satisfying the following:

\begin{ass}\label{Assumptions}$~$\\
    \begin{enumerate}

        \item $Q$ is stationary and ergodic with respect to each of the translations $\{\gth_{e_i}\}_{i=1}^{d}$.\\

        \item $Q(\CP(\om) = \emptyset)<1$, where $\mathcal{P}(\om)=\{x\in\BZ^d:\om(x)=1\}$.
    \end{enumerate}
\end{ass}

Let $\Om_0=\{\om\in\Om:\om(0)=1\}$. It follows from Assumptions \ref{Assumptions} that $Q(\Om_0)>0$ and therefore we can define a new probability measure $P$ on $\Om_0$ as the conditional probability of $Q$ on $\Om_0$, i.e.:
\begin{equation}
    P(B)=Q(B|\Om_0)=\frac{Q(B\cap \Om_0)}{Q(\Om_0)},\quad\forall B\in\mathfrak{B}. \label{P_definition}
\end{equation}
We denote by $\BE_Q$ and $\BE_P$ the expectation with respect to $Q$ and $P$ respectively.\\

\begin{clm}\label{The_model_claim}
    For $Q$ almost every $\om\in\Om$, every $v\in\BZ^d$ and every vector $e\in\CE$ there are infinitely many $k\in\BN$ such that $v+ke\in\CP(\om)$.
\end{clm}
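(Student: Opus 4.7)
The plan is to deduce the claim from the Birkhoff ergodic theorem applied to the translations $\theta_{e_i}$, using the positivity of $Q(\Omega_0)$ that Assumption \ref{Assumptions} provides.

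First I would fix a single pair $(v,e)$ with $v\in\BZ^d$ and $e\in\CE$. Writing $e=\pm e_i$, the transformation $\gth_e$ is measure-preserving on $(\Om,\mathfrak{B},Q)$, and is ergodic: if $e=e_i$ this is in the assumption, and if $e=-e_i$ then $\gth_e=\gth_{e_i}^{-1}$ has the same invariant $\sigma$-algebra as $\gth_{e_i}$, hence is also ergodic. Note that $\{v+ke\in\CP(\om)\}=\{\gth_{v+ke}\om\in\Om_0\}=\{\gth_{ke}(\gth_v\om)\in\Om_0\}$, so the assertion for $(v,e)$ is the statement that the $\gth_e$-orbit of $\gth_v\om$ enters $\Om_0$ infinitely often along positive times.

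Next I would apply Birkhoff's ergodic theorem to the indicator $\ind_{\Om_0}$ under the ergodic transformation $\gth_e$ and the measure $Q$. This yields a set $\Om_{v,e}\subset\Om$ with $Q(\Om_{v,e})=1$ such that for every $\om\in\Om_{v,e}$,
\begin{equation*}
    \lim_{n\to\infty}\frac{1}{n}\sum_{k=0}^{n-1}\ind_{\Om_0}\bigl(\gth_{ke}(\gth_v\om)\bigr) \;=\; Q(\Om_0).
\end{equation*}
Since $Q(\Om_0)>0$ by Assumptions \ref{Assumptions}, the sum on the left diverges, so the set of indices $k\in\BN$ with $\gth_v\om\in\gth_{-ke}\Om_0$, i.e.~with $v+ke\in\CP(\om)$, is infinite for every $\om\in\Om_{v,e}$.

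Finally, intersecting over all pairs $(v,e)\in\BZ^d\times\CE$ gives the full claim: the set $\Om^*:=\bigcap_{v\in\BZ^d,\,e\in\CE}\Om_{v,e}$ is a countable intersection of full-measure sets, hence $Q(\Om^*)=1$, and on $\Om^*$ the conclusion holds simultaneously for all $v$ and $e$. The only point requiring care is the ergodicity of $\gth_{-e_i}$, which is not directly stated but follows from the ergodicity of $\gth_{e_i}$ as noted above; beyond that, the proof is a standard application of the ergodic theorem and carries no further obstacle.
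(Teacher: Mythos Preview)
Your proof is correct and follows essentially the same approach as the paper: apply Birkhoff's ergodic theorem to the indicator of $\Om_0$ (the paper uses $\ind_{\Om_v}=\ind_{\Om_0}\circ\gth_v$, which yields literally the same ergodic average), conclude from $Q(\Om_0)>0$ that infinitely many terms are nonzero, and then pass to a countable intersection. Your explicit remarks on the ergodicity of $\gth_{-e_i}$ and on the final countable intersection are points the paper leaves implicit, but the argument is the same.
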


\begin{proof}
    Denote $\Om_v=\{\om\in\Om:v\in\CP(\om)\}$ and notice that $\mathbbm{1}_{\Om_v}\in L^1(\Om,\mathfrak{B},Q)$. Since $\gth_e$ is measure preserving and ergodic with respect to $Q$, by Birkhoff's Ergodic Theorem
    \[
        \lim_{n\rightarrow\infty}\frac{1}{n}\sum_{k=0}^{n-1}{\gth_e^k\mathbbm{1}_{\Om_v}}=\BE_Q\left[\mathbbm{1}_{\Om_v}\right]=Q(\Om_v)=Q(\Om_0)>0,\quad Q~a.s.
    \]
    Consequently, there exist $Q$ almost surely infinitely many integers such that $\gth_e^k\mathbbm{1}_{\Om_v}=1$, and therefore infinitely many $k\in\BN$ such that $v+ke\in\CP(\om)$.
\end{proof}

The following function measures the distance of "coordinate nearest neighbors" from the origin in an environment:

\begin{defn}
    For every $e\in\CE$ we define $f_e:\Om\rightarrow\BN^+$ by
    \begin{equation}
        f_e(\om)=\min\{k>0:\gth_e^k(\om)(0)=\om(ke)=1\}.
    \end{equation}
    Note that $f_e$ and $f_{-e}$ have the same distribution with respect to $Q$.\label{f_e_defn}
\end{defn}

For every $v\in\BZ^d$ define $N_v(\om)$ to be the set of the $2d$ "coordinate nearest neighbors" in $\om$ of $v$, one for each direction (see Figure \ref{fig:fig1}). More precisely $N_v(\om)=\bigcup_{e\in\CE}\left\{v+f_e(\theta_v(\om))e\right\}$. By Claim \ref{The_model_claim} $f_e(\theta_v(\om))$ is $Q$ almost surely well defined and therefore $N_v(\om)$ is $Q$ almost surely a set of $2d$ points in $\BZ^d$.\\

\begin{figure}
    \epsfig{figure=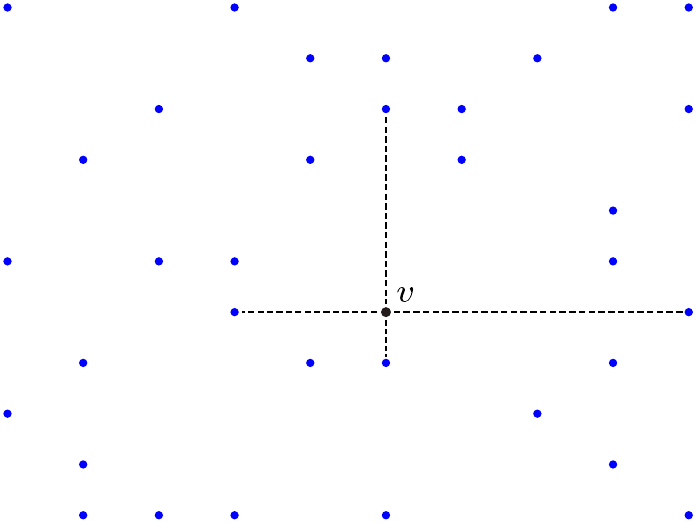, width=0.38\textwidth, clip}
    \smallskip
    \caption{An example for coordinate nearest neighbors}
    \label{fig:fig1}
\end{figure}

We now turn to define the random walk on environments. Fix some $\om\in\Om_0$ such that $|N_v(\om)|=2d$ for every $v\in \CP(\om)$. The random walk on the environment $\om$ is defined on the probability space $((\BZ^d)^\BN,\mathcal{G},P_\om)$, where $\mathcal{G}$ is the $\si$-algebra generated by cylinder functions, as the Markov chain taking values in $\CP(\om)$ with initial condition
\begin{equation}
    P_\om(X_0=0)=1, \label{inital_condition}
\end{equation}
and transition probability
\begin{equation}
    P_\om(X_{n+1}=u|X_n=v)=\left\{
    \begin{array}{cc}
        0&~~~u\notin N_v(\om) \\
        \frac{1}{2d}&~~~u\in N_v(\om) \\
    \end{array}
    \right. .\label{transition_probability}
\end{equation}

The distribution of the random walk according to this measure is called the quenched law of the random walk, and the corresponding expectation is denoted by $E_\om$.

Finally, since for each $G\in\mathcal{G}$, the map $\om\mapsto P_\om(G)$ is $\mathfrak{B}$ measurable, we may define the probability measure $\bold{P}=P\otimes P_\om$ on $(\Om_0\times(\BZ^d)^\BN,\mathfrak{B}\times\mathcal{G})$ by
\[
    \bold{P}(B\times G)=\int_{B}{P_\om(G)P(d\om)},\quad\forall B\in\mathfrak{B},~\forall G\in\mathcal{G}.
\]
The marginal of $\bf{P}$ on $(\BZ^d)^\BN$, denoted by $\BP$, is called the annealed law of the random walk and its expectation is denoted by $\BE$.

In the proof of the  high dimensional Central Limit Theorem we will assume in addition to assumptions \ref{Assumptions} the following:

\begin{ass} $~$\\

    (3) There exists $\ep_0>0$ such that $E_P\left[f_e^{2+\ep_0}\right]<\infty$ for every coordinate direction
    $e\in\CE$. \label{assumption3}\\
\end{ass}


\subsection{Examples}
$~$\\

Before turning to state and prove theorems regarding the model we give a few examples for distributions of points in $\BZ^2$ which satisfy the above conditions.

\begin{exam}[Bernoulli percolation]
The first obvious example for point process which satisfies the above conditions is the Bernoulli vertex percolation. Fix some $0<p<1$ and declare every point $v\in\BZ^d$ to be in the environment independently with probability $p$.
\end{exam}

\begin{exam}[Infinite component of supercritical percolation]
Fix some $d\geq 2$ and denote by $p_c(\BZ^d)$ the critical value for Bernoulli edge percolation on $\BZ^d$. For every $p_c(\BZ^d)<p\leq 1$ there exists with probability one a unique infinite component in $\BZ^d$, which we denote by $\CC^\infty=\CC^\infty(\om)$. We can now define the environment by $\CP(\om)=\CC^\infty(\om)$, i.e., the points in the environment are exactly the points of the unique infinite cluster of the percolation process.
\end{exam}

\begin{exam}
We denote by $\{r_n\}_{n\in\BN}$ and $\{p_n\}_{n\in\BN}$ two sequences of positive numbers, the first satisfies $\lim_{n\rightarrow\infty}r_n=\infty$ and the second satisfies $\lim_{n\rightarrow\infty}p_n=0$ and $p_n<1$ for every $n\in\BN$. We define the environment by the following procedure: For  every $v\in\BZ^d$ and $n\BN$ delete the ball of radius $r_n$ centered at $v$ with probability $p_n$. If the sequence $p_n$ converge fast enough to zero and the sequence $r_n$ converge slow enough to infinity, this procedure yields a random point process that satisfy the model assumptions.
\end{exam}

\begin{exam}[Random interlacement]
Fix some $d\geq 3$. In \cite{sznitman2010vacant} Sznitman introduced the model of random interlacement in  $\BZ^d$. Informally this is the union of traces of simple random walks in $\BZ^d$. The random interlacement in $\BZ^d$ is a distribution on points in $\BZ^d$ which satisfies the above conditions (see \cite[Theorem 2.1]{sznitman2010vacant}).
\end{exam}


\subsection{Main Results}
$~$\\

Our main goal is to study the behavior of random walks in this model. The results are summarized in the following theorems:\\

(1) \textbf{Law of Large Numbers:}  For $P$ almost every $\om\in\Om_0$, the limiting velocity of the random walk exists and equals zero. More precisely:

\begin{thm}\label{LLN}
    Let $(\Om,\FB,Q)$ be a $d$-dimensional discrete point process satisfying assumption \ref{Assumptions}, then
    \[
        \BP\left(\left\{\lim_{n\rightarrow\infty}{\frac{X_n}{n}}=0\right\}\right)=1.
    \]
\end{thm}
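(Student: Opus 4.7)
My proof plan is to decompose $X_n$ into a local drift whose ergodic average vanishes and a centered martingale that is $o(n)$, exploiting the reversibility induced by the constant-degree structure of the coordinate-nearest-neighbor graph.

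First I would set up the environment chain. Because $v=u+f_e(\theta_u\om)e$ if and only if $u=v+f_{-e}(\theta_v\om)(-e)$, the coordinate-nearest-neighbor relation is symmetric, and the quenched walk is a simple random walk on a random graph of constant degree $2d$. Hence the counting measure on $\CP(\om)$ is reversible, and a standard Palm computation shows that the environment seen from the particle $\om_n:=\theta_{X_n}\om$ is a Markov chain on $\Om_0$ with $P$ as reversible stationary measure. Using the $\theta_{e_i}$-ergodicity of $Q$ (Assumption \ref{Assumptions}(1)) together with Claim \ref{The_model_claim}, I would deduce ergodicity of this environment chain under $\mathbf{P}$ by pulling chain-invariant events back to $Q$-shift-invariant events.

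Next I would identify the local drift. Setting $\Phi(\eta):=\frac{1}{2d}\sum_{e\in\CE}f_e(\eta)\,e$, one has $E_\om[X_{k+1}-X_k\mid X_k=v]=\Phi(\theta_v\om)$. Since $f_e$ is precisely the first-return time of the $Q$-ergodic shift $\theta_e$ to $\Om_0$, Kac's lemma yields $\BE_P[f_e]=1/Q(\Om_0)<\infty$ for every $e\in\CE$, so $\Phi\in L^1(P,\BR^d)$; the $\pm e_i$ contributions cancel in pairs to give $\BE_P[\Phi]=0$. Writing
\[
\frac{X_n}{n}=\frac{1}{n}\sum_{k=0}^{n-1}\Phi(\om_k)+\frac{M_n}{n},\qquad M_n:=\sum_{k=0}^{n-1}\bigl[(X_{k+1}-X_k)-\Phi(\om_k)\bigr],
\]
Birkhoff's ergodic theorem for the environment chain sends the first term to $\BE_P[\Phi]=0$ $\mathbf{P}$-almost surely.

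The main obstacle will be controlling $M_n/n$ under the weak moment hypothesis of Theorem \ref{LLN}: we have only $f_e\in L^1(P)$, so neither the $L^2$ nor any $L^{1+\ep}$ martingale strong law applies directly. I would overcome this by truncation. For $K>0$, split each jump as $X_{k+1}-X_k=Y_k^K+R_k^K$ according to whether $\max_e f_e(\om_k)\le K$ or not; note $|Y_k^K|\le K$. The centered truncated piece $Y_k^K-E_\om[Y_k^K\mid X_0,\dots,X_k]$ is then a martingale difference bounded by $2K$, so Azuma's inequality combined with Borel-Cantelli sends its normalized partial sums to zero almost surely. The untruncated contribution is dominated in absolute value by $g_K(\om_k):=\max_e f_e(\om_k)\,\ind_{\{\max_e f_e(\om_k)>K\}}$; since $\max_e f_e\le\sum_e f_e\in L^1(P)$, Birkhoff yields $n^{-1}\sum_{k=0}^{n-1} g_K(\om_k)\to \BE_P[g_K]$, and $\BE_P[g_K]\to 0$ as $K\to\infty$ by dominated convergence. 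Combining these estimates gives $\limsup_n|X_n|/n\le 2\BE_P[g_K]$ for every $K>0$, hence the desired a.s.\ convergence $X_n/n\to 0$.
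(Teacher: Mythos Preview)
Your argument is correct. One small gap in the write-up: after centering $Y_k^K$ and bounding $|R_k^K|\le g_K(\om_k)$, you still owe an estimate on $\frac{1}{n}\sum_k E_\om[Y_k^K\mid\CF_k]$. This is easily supplied, since $E_\om[Y_k^K\mid\CF_k]=\Phi(\om_k)-E_\om[R_k^K\mid\CF_k]$ and $|E_\om[R_k^K\mid\CF_k]|\le g_K(\om_k)$ as well (the indicator $\ind_{\{\max_e f_e(\om_k)>K\}}$ is $\CF_k$-measurable), which is presumably where your factor $2$ comes from.

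The paper takes a shorter route that avoids the drift--martingale split and the truncation altogether. It observes that each coordinate increment is a function of the \emph{pair} of successive environments, namely $(X_{k+1}-X_k)\cdot e=g_e(\theta_{X_k}\om,\theta_{X_{k+1}}\om)$ with $g_e(\om,\om')=f_e(\om)\ind_{\{\om'=\si_e\om\}}-f_{-e}(\om)\ind_{\{\om'=\si_{-e}\om\}}$, and that $\BE\big[g_e(\om,\theta_{X_1}\om)\big]=\tfrac{1}{2d}\big(\BE_P[f_e]-\BE_P[f_{-e}]\big)=0$. A single application of the two-point ergodic theorem for the environment chain (Theorem~\ref{Thm_mutual_ergodic}) then gives $\frac{1}{n}\sum_{k<n}g_e(\theta_{X_k}\om,\theta_{X_{k+1}}\om)\to 0$ directly. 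In effect, by conditioning first you split one $L^1$ ergodic average into an $L^1$ ergodic average plus a martingale with only $L^1$ increments, which then forces the truncation detour; the paper keeps the increment whole and lets the ergodic theorem do all the work. On the other hand, your use of Kac's formula to get $\BE_P[f_e]=1/Q(\Om_0)$ is cleaner than the paper's Birkhoff-based derivation of $\BE_P[f_e]<\infty$.
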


(2) \textbf{Recurrence Transience Classification:} We give a partial classification of recurrence-transience for random walks on discrete point processes. The precise statements are:

\begin{prop}
    Any one dimensional random walk on a discrete point process satisfying assumption \ref{Assumptions} is $\BP$ almost surely recurrent. \label{tran_recu1}
\end{prop}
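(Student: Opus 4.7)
The plan is to reduce the one-dimensional walk to the simple random walk on $\BZ$ via the natural ordering of $\CP(\om)$. In dimension $d=1$, we have $\CE=\{+1,-1\}$, so from any vertex $v\in\CP(\om)$ the walker jumps with probability $1/2$ to the nearest point of $\CP(\om)$ strictly to the right and with probability $1/2$ to the nearest point strictly to the left.

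By Claim \ref{The_model_claim}, for $P$ almost every $\om\in\Om_0$ the set $\CP(\om)$ contains $0$ and is unbounded above and below, so it admits a unique order-preserving enumeration $\CP(\om)=\{x_i(\om):i\in\BZ\}$ with $x_0(\om)=0$ and $x_i(\om)<x_{i+1}(\om)$ for all $i$. By the definition of $N_v(\om)$ together with $f_e$, the coordinate nearest neighbors of $x_i(\om)$ are exactly $x_{i-1}(\om)$ and $x_{i+1}(\om)$. Hence, under the bijection $\phi_\om:\BZ\to\CP(\om)$ given by $i\mapsto x_i(\om)$, the quenched law $P_\om$ on $\CP(\om)$ pushes forward to the law of a Markov chain on $\BZ$ starting at $0$ with transition probabilities $p(i,i+1)=p(i,i-1)=1/2$; that is, the standard simple random walk on $\BZ$.

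The simple random walk on $\BZ$ is recurrent, so the image walk returns to $0\in\BZ$ infinitely often $P_\om$-almost surely, which under $\phi_\om$ means that $X_n=0$ for infinitely many $n$, $P_\om$-almost surely. Integrating over $\om$ with respect to $P$ yields recurrence under the annealed measure $\BP$.

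The main (and essentially only) thing to verify carefully is that the induced graph on $\CP(\om)$ from the coordinate nearest-neighbor structure is indeed isomorphic to $\BZ$; once this is recorded the result is immediate. Note that no integrability hypothesis on the gap variables $f_{\pm 1}$ is needed: even when the gaps have heavy tails, the embedded chain after forgetting the embedding is still SRW on $\BZ$, and recurrence is preserved under relabeling of vertices.
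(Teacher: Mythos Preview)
Your argument is correct and is essentially the same as the paper's: the paper introduces the enumeration $\Fp_n$ of $\CP(\om)$ and the associated simple random walk $Y_n$ of jump signs, observes $X_n=\Fp_{Y_n}$ with $\Fp_0=0$, and concludes recurrence from the recurrence of $Y_n$. Your bijection $\phi_\om(i)=x_i(\om)$ is exactly $\Fp_i$, and your pushed-forward chain is exactly $Y_n$.
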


\begin{thm}
    Let $(\Om,\FB,Q)$ be a two dimensional discrete point process satisfying assumption \ref{Assumptions} and assume there exists a constant $C>0$ such that
    \begin{equation}
        \sum_{k=N}^{\infty}k\cdot P(f_{e_i}=k)\leq\frac{C}{N},\quad\forall i\in\{1,2\},~\forall N\in\BN, \label{Cauchy_tail_ass}
    \end{equation}
    which in particular holds whenever $f_{e_i}$ has a second moment for $i\in\{1,2\}$. Then the random walk is $\BP$ almost surely recurrent.\label{tran_recu2}
\end{thm}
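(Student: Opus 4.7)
The walk is reversible: by Definition~\ref{f_e_defn}, $u \in N_v(\omega)$ (i.e.\ $u = v + f_e(\theta_v\omega)e$) forces $v \in N_u(\omega)$ (i.e.\ $v = u + f_{-e}(\theta_u\omega)(-e)$), so the walk defined by \eqref{transition_probability} is the simple random walk on the undirected $2d$-regular graph $G(\omega) = (\CP(\omega), E(\omega))$ with edges $\{v, u\}$ whenever $u \in N_v(\omega)$. Placing unit conductance on each edge, the standard electric-network characterization (Doyle--Snell / Lyons--Peres) reduces the claim to showing $R_{\mathrm{eff}}^{G(\omega)}(0, \infty) = \infty$ for $P$-a.e.\ $\omega$.

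I would then apply Nash--Williams with box-boundary cutsets. For $n \geq 1$ set $B_n = [-n, n]^2$ and
\[
    \Pi_n = \bigl\{e \in E(G(\omega)) : e \text{ has one endpoint in } B_n \text{ and the other outside}\bigr\}.
\]
Each $\Pi_n$ separates $0$ from infinity. By Claim~\ref{The_model_claim}, the horizontal edges of $G(\omega)$ on each line $\{y=b\}$ form a bi-infinite path through $\CP(\omega) \cap \{y = b\}$, so for each $|b| \leq n$ at most two horizontal edges (the left- and right-crossings of the gap containing $\pm n$) lie in $\Pi_n$; the same holds for columns. This gives the deterministic, $\omega$-uniform bound $|\Pi_n| \leq 4(2n+1) = O(n)$.

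The principal obstacle is non-disjointness of the family $\{\Pi_n\}$: an edge of length $\ell$ may belong to up to $\ell$ consecutive cutsets, so a naive Nash--Williams summation double-counts long edges. Here condition \eqref{Cauchy_tail_ass} enters decisively. By the inspection paradox, the length of the right-crossing edge of $\Pi_n$ on line $\{y=b\}$ follows the size-biased law of $f_{e_1}$ under $P$, and \eqref{Cauchy_tail_ass} translates to the size-biased tail bound $P_{\mathrm{bias}}(\ell > M) \leq C'/M$. Consequently, the expected number of edges in $\Pi_n$ of length exceeding $M$ is at most $Cn/M$.

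To conclude, I would pass to a disjoint subfamily by truncation. Choose a threshold $M_n$, let $\tilde\Pi_n \subseteq \Pi_n$ consist of the crossings of length at most $M_n$, and extract a subsequence $n_k$ with $n_{k+1} - n_k > M_{n_k}$ so that the $\tilde\Pi_{n_k}$ are automatically pairwise disjoint. A Borel--Cantelli argument driven by \eqref{Cauchy_tail_ass} shows that along $(n_k)$, only finitely many scales carry a discarded ``long'' crossing $\omega$-a.s.; absorbing these finitely many bad edges back into the appropriate $\tilde\Pi_{n_k}$ preserves both the cutset property and the $O(n_k)$ size bound. Nash--Williams then yields
\[
    R_{\mathrm{eff}}^{G(\omega)}(0, \infty) \;\geq\; \sum_k \frac{1}{|\tilde\Pi_{n_k}|} \;\geq\; c \sum_k \frac{1}{n_k}.
\]
The hardest step is the choice of $(M_n)$ and $(n_k)$: $M_n$ must be small enough that the spacing $n_{k+1} - n_k \gtrsim M_{n_k}$ keeps the scales dense enough for $\sum 1/n_k$ to diverge, yet large enough that $\sum_k n_k/M_{n_k}$ is summable so Borel--Cantelli applies. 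The precise tail $\leq C/M$ provided by \eqref{Cauchy_tail_ass} is exactly what permits this balance to be struck, explaining the specific form of the hypothesis.
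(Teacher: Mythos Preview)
Your overall strategy---unit conductances, box cutsets, Nash--Williams---is sound, and you have correctly identified the crux: the cutsets $\Pi_n$ overlap because long edges belong to many of them. But the specific repair you propose cannot be carried out. Your plan imposes three simultaneous constraints on $(M_{n_k})$ and $(n_k)$:
\begin{enumerate}
    \item[(i)] $\sum_k n_k/M_{n_k}<\infty$ (so that Borel--Cantelli fires, since $P(\exists\text{ long crossing at scale }n_k)\lesssim n_k/M_{n_k}$);
    \item[(ii)] $n_{k+1}-n_k>M_{n_k}$ (so that the truncated cutsets are disjoint);
    \item[(iii)] $\sum_k 1/n_k=\infty$ (so that Nash--Williams diverges).
\end{enumerate}
Condition (i) forces $M_{n_k}/n_k\to\infty$, whence (ii) gives $n_{k+1}/n_k\to\infty$; but super-geometric growth of $(n_k)$ makes $\sum_k 1/n_k<\infty$, contradicting (iii). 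No choice of thresholds resolves this, so the argument as written has a genuine gap---the Cauchy tail is simply not strong enough to make a Borel--Cantelli approach work.

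The paper circumvents this with a different device: instead of thinning the family $\{\Pi_n\}$, it \emph{subdivides} each edge of length $k$ into $k$ unit edges, each carrying conductance $k$ (so the effective resistance is unchanged by the series law). In the subdivided network the box-boundary cutsets are automatically pairwise disjoint and contain exactly $8n+4$ edges; the price is that the edge conductances are no longer $1$ but follow the size-biased law $Q(c(e)=k)=k\,P(f_{e_i}=k)/\BE_P[f_{e_i}]$, which by hypothesis \eqref{Cauchy_tail_ass} has a Cauchy tail. A short lemma (Lemma~\ref{Cauchy_tail_lemma}, due to \cite{Be01}) then shows that a sum of $n$ identically distributed Cauchy-tailed variables is $O(n\log n)$ with probability $>1-\ep$ for every $\ep>0$, and a second lemma (Lemma~\ref{lem2_2dim_recurrent}) converts this ``with high probability for every $n$'' statement into $\sum_n 1/C_{\Pi_n}=\infty$ a.s., since $\sum 1/(n\log n)=\infty$. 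This avoids Borel--Cantelli entirely: one only needs $P(A_n)>1-\ep$ for all large $n$, not summability of $P(A_n^c)$.
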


\begin{thm}
    Fix $d\geq 3$ and let $(\Om,\FB,Q)$ be a $d$-dimensional discrete point process satisfying assumption \ref{Assumptions}. Then the random walk is $\BP$ almost surely transient.\label{tran_recu3}
\end{thm}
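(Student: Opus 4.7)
First I would observe the key symmetry $u \in N_v(\om) \Leftrightarrow v \in N_u(\om)$: if $u = v + f_{e_i}(\gth_v \om) e_i$ is the first $\CP(\om)$-point past $v$ in direction $+e_i$, then $v$ is the first $\CP(\om)$-point past $u$ in direction $-e_i$. Consequently $P_\om(u,v) = P_\om(v,u) = 1/(2d)$, so the walk is reversible with uniform invariant measure on $\CP(\om)$ and unit conductance on every edge of the graph $G(\om) := (\CP(\om), \{\{v,w\} : w \in N_v(\om)\})$. By the Thomson/Doyle--Snell principle, transience is equivalent to the existence of a unit flow $\theta$ from $0$ to infinity on $G(\om)$ with $\sum_e \theta(e)^2 < \infty$.

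\textbf{Step 2 (Flow construction).} Since $d \geq 3$, $\BZ^d$ admits a finite-energy unit flow $J$ from $0$ to infinity (take, e.g., $J$ equal to the gradient of the discrete Green function, so that $|J(u,u+e_i)| \lesssim |u|^{1-d}$). I would transfer $J$ to $G(\om)$ by a ``segment integration'': for each $G(\om)$-edge $(v, v+f e_i)$, spanning $f$ consecutive lattice edges in direction $e_i$, set
\[
    \theta(v, v+f e_i) := \sum_{j=0}^{f-1} J(v+je_i, v+(j+1)e_i).
\]
Cauchy--Schwarz gives $\theta(v, v+fe_i)^2 \leq f \sum_{j} J(v+je_i, v+(j+1)e_i)^2$, and summing over $G(\om)$-edges yields
\[
    \sum_e \theta(e)^2 \leq \sum_{i=1}^{d} \sum_{u \in \BZ^d} \ell(u,i)\, J(u,u+e_i)^2,
\]
where $\ell(u,i)$ is the length of the $\CP(\om)$-segment in direction $e_i$ containing the lattice edge $(u,u+e_i)$.

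\textbf{Step 3 (Main obstacles).} Two things remain: (a) a.s.\ finiteness of the energy bound above, and (b) divergence-freeness of $\theta$ on $\CP(\om)\setminus\{0\}$ with unit flux at $0$. The main obstacle is (a), since the theorem does not assume any moment on $f_{e_i}$, so $\BE_Q[\ell(u,i)]$ can be infinite. To handle this I would combine ergodicity with the polynomial decay $J(u,u+e_i)^2 \lesssim |u|^{2(1-d)}$ (valid for $d\geq 3$): a Borel--Cantelli argument and the stationarity of the point process bound typical $\ell(u,i)$ by a sub-polynomial function of $|u|$, making the series $Q$-a.s.\ convergent. For (b), the naive segment sum absorbs transverse $J$-contributions at intermediate lattice points $v+je_i$ that do not automatically cancel, so I would modify the construction (e.g.\ by averaging over $j$, or by adding transverse corrections). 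A cleaner route which sidesteps both (a) and (b) is a rough-embedding comparison \`a la Kanai or Benjamini--Pemantle--Peres: use an ergodic/density argument to extract a lattice-like subset of $\CP(\om)$ whose inclusion in $G(\om)$ is a quasi-isometry, and transfer the transience of $\BZ^d$ directly to $G(\om)$.
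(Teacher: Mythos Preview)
Your approach is genuinely different from the paper's, and the obstacles you flag in Step~3 are real and not convincingly resolved. The paper does \emph{not} build a flow or a rough embedding. Instead it proves a deterministic isoperimetric inequality: for \emph{every} finite $A\subset\BZ^d$ one has $\max_j|\Pi^j(A)|\ge C|A|^{(d-1)/d}$ (Lemma~\ref{lem_volume_surface}), and since every point in the ``upper'' $j$-boundary of $A$ contributes a $G(\om)$-edge leaving $A$, this yields the isoperimetric profile $\Phi(u)\ge c_0 u^{-1/d}$ for the walk on $\CP(\om)$, uniformly in $\om$. The Morris--Peres evolving-sets bound then gives $p_\om^n(0,0)\le K n^{-d/2}$, and transience follows from $\sum_n n^{-d/2}<\infty$. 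The crucial point is that the isoperimetric step is purely combinatorial and uses nothing about the law of $f_e$; this is precisely what lets the paper avoid any moment hypothesis.

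By contrast, both of your proposed routes implicitly need quantitative control on the segment lengths. For the flow route, the divergence defect you note in~(b) is genuine: at $v\in\CP(\om)$ the segment-integrated $\theta$ picks up all the transverse $J$-divergence at the intermediate lattice points $v+je_i$, and there is no local correction that kills this without introducing new energy terms that again involve $\ell(u,i)$. For~(a), your Borel--Cantelli sketch would need $Q(\ell(0,i)>n)$ to decay at some rate, but under Assumption~\ref{Assumptions} alone one only knows $\BE_P[f_e]<\infty$, and the size-biased law $Q(\ell=k)=kP(f_e=k)/\BE_P[f_e]$ can have arbitrarily heavy tails. The rough-embedding route has the same issue: establishing a quasi-isometry between $G(\om)$ and $\BZ^d$ (in either the Euclidean or the graph metric) requires exactly the kind of gap control that the hypotheses do not provide. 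So while the electrical-network intuition is sound, the paper's isoperimetric/heat-kernel argument is what actually delivers the result at this level of generality.
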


(3) \textbf{Central Limit Theorems} - We prove that one-dimensional random walks on discrete point processes satisfy a Central Limit Theorem. We also prove that in dimension $d\geq 2$, under the additional assumption \ref{assumption3}, the random walks on a discrete point process satisfy a Central Limit Theorem.

\begin{thm}
    Let $(\Om,\FB,Q)$ be a one-dimensional discrete point process satisfying assumption \ref{Assumptions}. Then $\BE_P[f_{1}]<\infty$ and for $P$ almost every $\om\in\Om_0$ \begin{equation}
        \lim_{n\rightarrow\infty}{\frac{X_n}{\sqrt{n}}}\overset{D}{=}N(0,\BE_P^2[f_{1}]),
    \end{equation}
    where $N(0,a^2)$ denotes the normal distribution with zero expectation and variance $a^2$, and the limit is in distribution. \label{CLT1}
\end{thm}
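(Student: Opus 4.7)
The plan is to reduce the one-dimensional walk to a simple symmetric random walk (SRW) on $\BZ$ by relabelling the points of $\CP(\om)$ in increasing order, and then to combine the classical CLT for SRW with Birkhoff's ergodic theorem applied to the induced Palm transformation on $\Om_0$. For $P$-almost every $\om\in\Om_0$, Claim \ref{The_model_claim} lets me list $\CP(\om)$ as a bi-infinite increasing sequence $\ldots<v_{-1}(\om)<v_0(\om)=0<v_1(\om)<\ldots$, with successive gaps $v_{k+1}(\om)-v_k(\om)=f_1(\gth_{v_k}\om)$. Because the transitions (\ref{transition_probability}) in $d=1$ move $X_n$ to the immediate left or right neighbor in $\CP(\om)$ each with probability $\tfrac{1}{2}$, the index process $Y_n$ determined by $X_n=v_{Y_n}(\om)$ is, under $P_\om$, a simple symmetric random walk on $\BZ$ started at $0$.

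To handle the deterministic side I argue that $\BE_P[f_1]<\infty$ and $v_n/n\to\BE_P[f_1]$. The first assertion is Kac's lemma: $\gth_{e_1}$ preserves $Q$ and is ergodic, $Q(\Om_0)>0$, and the first return time of $\gth_{e_1}$ to $\Om_0$ starting from $\om\in\Om_0$ is exactly $f_1(\om)$, so $\BE_P[f_1]=1/Q(\Om_0)<\infty$. The induced transformation $\tilde\gth\om:=\gth_{f_1(\om)}\om$ then preserves $P$ and inherits ergodicity from $\gth_{e_1}$, and since $v_n(\om)=\sum_{k=0}^{n-1}f_1(\tilde\gth^k\om)$, Birkhoff's ergodic theorem yields
\begin{equation*}
\lim_{n\to\infty}\frac{v_n(\om)}{n}=\BE_P[f_1]\qquad P\text{-a.s.},
\end{equation*}
with the analogous statement $v_{-n}(\om)/n\to -\BE_P[f_1]$.

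Now fix $\om$ in the full-$P$-measure set on which these convergences hold. Under $P_\om$ the classical CLT for SRW gives $Y_n/\sqrt{n}\overset{D}{\longrightarrow} N(0,1)$, and in particular $|Y_n|\to\infty$ in $P_\om$-probability. Combined with $v_k/k\to\BE_P[f_1]$ as $|k|\to\infty$, this yields $v_{Y_n}/Y_n\to\BE_P[f_1]$ in $P_\om$-probability, and Slutsky's theorem applied to
\begin{equation*}
\frac{X_n}{\sqrt{n}}=\frac{v_{Y_n}(\om)}{Y_n}\cdot\frac{Y_n}{\sqrt{n}}
\end{equation*}
produces the announced Gaussian limit $N(0,\BE_P^2[f_1])$. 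The only subtle point is the deterministic ratio convergence $v_{Y_n}/Y_n\to\BE_P[f_1]$: for any $\delta>0$ one chooses a window $[-K,K]$ outside of which $|v_k(\om)/k-\BE_P[f_1]|<\delta$, and then notes that $P_\om(|Y_n|\le K)\to 0$ by the SRW CLT. Everything else is routine.
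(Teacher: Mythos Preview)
Your proof is correct and follows essentially the same route as the paper: both relabel $\CP(\om)$ by a bi-infinite index so that $X_n=v_{Y_n}$ with $Y_n$ a simple symmetric walk, invoke the CLT for $Y_n/\sqrt n$, and use the ergodic theorem for the induced shift to control $v_k/k$. The only cosmetic differences are that the paper carries out the combination step by a direct $\ep$-sandwich on the distribution functions rather than naming Slutsky, and that $\BE_P[f_1]<\infty$ was already obtained in the proof of Theorem~\ref{LLN} (so you need not invoke Kac separately, though your argument is of course valid).
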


\begin{rem}
    Note that for one-dimensional random walks on discrete point processes CLT holds even without the assumption that the variance of $f_1$ is finite. In particular the diffusion constant is given by the square of $\BE_P[f_1]$.
\end{rem}

\begin{thm}
    Fix $d\geq 2$ and let $(\Om,\FB,Q)$ be a $d$-dimensional discrete point process satisfying assumptions \ref{Assumptions} and \ref{assumption3}, then for $P$ almost every $\om\in\Om_0$
    \begin{equation}
        \lim_{n\rightarrow\infty}{\frac{X_n}{\sqrt{n}}}\overset{D}{=}N(0,D),
    \end{equation}
    where $N(0,D)$ is a $d$-dimensional normal distribution with zero expectation and covariance matrix $D$ that depends only on $d$ and the distribution of $P$. As before the limit is in distribution.\label{CLT2}
\end{thm}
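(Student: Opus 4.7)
The plan is to apply the environment-viewed-from-the-particle technique together with the Kipnis--Varadhan corrector method. This framework fits the model because the walk is reversible with respect to counting measure on $\CP(\om)$: the coordinate nearest-neighbor relation is symmetric (if $v=u+f_e(\gth_u\om)e$ then $u$ is the $(-e)$-neighbor of $v$), so $p(u,v)=p(v,u)=\frac{1}{2d}$ whenever $v\in N_u(\om)$. Consequently the environment process $\bar\om_n:=\gth_{X_n}(\om)$, a Markov chain on $\Om_0$ with transition kernel $K$, has $P$ as a reversible invariant measure; ergodicity of $(K,P)$ follows from ergodicity of $Q$ under the shifts $\gth_{e_i}$ together with the almost-sure irreducibility of the walk on $\CP(\om)$ afforded by Claim \ref{The_model_claim}.

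Next I would construct a corrector $\chi:\Om_0\to\BR^d$ solving the Poisson equation $(I-K)\chi=b$, where the local drift $b(\om):=E_\om[X_1]=\frac{1}{2d}\sum_{e\in\CE}f_e(\om)\,e$ lies in $L^{2+\ep_0}(P)$ by Assumption \ref{assumption3} and is orthogonal to the constants by the distributional identity $f_e\stackrel{d}{=}f_{-e}$. Reversibility of $K$ then enables the Kipnis--Varadhan construction via the resolvents $\chi_\lambda:=(I-(1-\lambda)K)^{-1}b$ in the limit $\lambda\downarrow 0$, producing $\chi$ with $L^2(P)$-gradients. With this in hand, $M_n:=X_n+\chi(\bar\om_n)-\chi(\bar\om_0)$ is a square-integrable $\BR^d$-valued martingale under the annealed law whose one-step increment covariance is the claimed matrix $D$. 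The quenched martingale CLT then yields $M_n/\sqrt{n}\Rightarrow N(0,D)$: the conditional covariances $E_\om[(M_{k+1}-M_k)(M_{k+1}-M_k)^T\mid\CF_k]$ form a stationary ergodic sequence whose Ces\`aro average converges $P$-a.s.~to $D$ by Birkhoff, and the $2+\ep_0$ moment of $|X_1|$ supplies the Lindeberg condition.

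The final, most delicate step is to transfer the CLT from $M_n$ to $X_n$, which requires $\chi(\bar\om_n)/\sqrt{n}\to 0$ in quenched probability for $P$-a.e.~$\om$. This \emph{quenched sublinearity of the corrector} is the main obstacle. The annealed version is automatic from $\chi\in L^2(P)$ together with stationarity, but the quenched statement demands a genuine spatial bound
\[
    \lim_{n\to\infty}\,\max_{v\in\CP(\om),\,|v|\leq n}\frac{|\chi(\gth_v\om)|}{n}=0,\quad P\text{-a.s.},
\]
combined with the diffusive estimate $\BE[|X_n|]=O(\sqrt{n})$ mentioned in the introduction. Following the strategy of Berger--Biskup / Mathieu--Piatnitski, I would first establish sublinearity along coordinate lines, using the $2+\ep_0$ moment of $f_e$ together with the ergodic theorem for the shifts $\gth_{e_i}$, and then promote this one-dimensional estimate to the full box via a chaining argument on the random graph $\CP(\om)$. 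The promotion step is the genuine difficulty: the unbounded jumps make the situation more delicate than the bounded-range percolation setting of the cited works, and this is precisely where the \emph{strict} $2+\ep_0$ moment (as opposed to a mere second moment) is needed, to control the tails of the step-size distribution in the chaining.
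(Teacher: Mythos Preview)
Your overall architecture---corrector via Kipnis--Varadhan resolvents, martingale CLT for $M_n=X_n+\chi(X_n,\om)$ via Lindeberg--Feller with Theorem~\ref{Thm_mutual_ergodic} supplying the ergodic averaging, then transfer to $X_n$ by showing $\chi(X_n,\om)/\sqrt n\to 0$---is exactly the paper's. Two points need correction.

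First, a conceptual slip: the corrector is not a function $\chi:\Om_0\to\BR^d$; in general there is no $L^2$ solution of $(I-K)\chi=b$. What the resolvent limit produces is a \emph{cocycle} $\chi:\BZ^d\times\Om_0\to\BR^d$ satisfying $\chi(x,\om)-\chi(y,\om)=\chi(x-y,\gth_y\om)$, with square-integrable increments but not of the form $\psi\circ\gth_x-\psi$. If it were, the sublinearity you flag as the main obstacle would be trivial from stationarity.

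Second, and more substantively, the paper does \emph{not} establish the uniform sublinearity $\max_{|x|\le n}|\chi(x,\om)|/n\to 0$ that you propose to prove. The coordinate-line-then-chaining argument, transplanted to this unbounded-range setting, delivers only the density statement of Theorem~\ref{sub_linearity_everywhere}:
\[
\limsup_{n\to\infty}\frac{1}{(2n+1)^d}\sum_{x\in\CP(\om),\,|x|\le n}\ind_{\{|\chi(x,\om)|\ge \ep n\}}\le\ep.
\]
To close the gap the paper brings in an extra ingredient you do not mention: the uniform heat-kernel bound $p_\om^n(0,x)\le K n^{-d/2}$ of Proposition~\ref{bound_of_transitions}, obtained from a deterministic isoperimetric inequality for projections (Lemma~\ref{lem_volume_surface}). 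The final estimate splits $P_\om(|\chi(X_n,\om)|>\ep\sqrt n)$ according to whether $\|X_n\|\le\ep^{-1/d}\sqrt n$; on that event the heat-kernel bound turns the density sublinearity into the required smallness, while the complementary event is controlled by Markov's inequality and the quenched diffusive bound $E_\om[\|X_n\|]\le c(\om)\sqrt n$ of Theorem~\ref{asymptotic_X_n}.

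Finally, you misplace the role of the extra $\ep_0$. It is not used in the chaining for sublinearity---second moments suffice there and for building the corrector. The place it enters is the proof of $E_\om[\|X_n\|]\le c(\om)\sqrt n$: bounding $\sum_{x,y}\ind_{\{y\in N_x\}}(g_n(x)+g_n(y))\|x-y\|^2$ uniformly in $n$ requires a pointwise ergodic theorem of Nevo--Stein for sphere averages on free groups, and that theorem needs $f_e^2\in L^p$ for some $p>1$.
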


\textbf{Structure of the paper.} Sect.~\ref{sec:induced_shift_properties} collects some facts about the Markov chain on environments and some ergodic results
related to it. It is based on previously known material. In Sect.~\ref{sec:LLN} we deal with the proof of Law of Large Numbers and in Sect.~\ref{sec:one_dim_CLT} with the one dimensional Central Limit Theorem. The Recurrence Transience classification is discussed in Sec.~\ref{sec:Transience_Recurrence}. The novel parts of the high dimensional Central Limit Theorem proof (asymptotic behavior of the random walk, construction  of the corrector and sublinear bounds on the corrector) appear in Sect.~\ref{sec:asymptotice_behavior}-\ifarxiv \ref{sec:sublinearity_everywhere}\else  \ref{sec:essential_sublinearity}\fi. The actual proof of the high dimensional Central Limit Theorem is carried out in Sect.~\ref{sec:high_dim_CLT}. Finally Sect.~\ref{sec:conj_and_ques} contains further discussion, some open questions and conjectures.


\section{The Induced shift And The Environment Seen From The Random Walk}\label{sec:induced_shift_properties}
$~$\\
The content of this section is a standard textbook material. The form in which it appears here is taken from Section 3 of \cite{BB06}. Even though it had all been known before, \cite{BB06} is the best existing source for our purpose.

Fix some $e\in\CE$. Since by Claim \ref{The_model_claim} $f_e$ is $Q$ almost surely finite we can define the induced shift $\si_e:\Om_0\rightarrow\Om_0$ by
\[
    \si_e(\om)=\gth^{f_e(\om)}_e\om.
\]

\begin{thm}
    For every $e\in\CE$, the induced shift $\si_e:\Om_0\rightarrow\Om_0$ is measure preserving and ergodic with respect to $P$. \label{induced_shift_ergodicy}
\end{thm}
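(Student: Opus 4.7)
The plan is the classical Kakutani induced-transformation argument applied to the invertible measure-preserving ergodic system $(\Om, \FB, Q, \gth_e)$ and the ``base'' $\Om_0$ of positive $Q$-measure. The hypotheses needed---namely that the return time $f_e$ is $Q$-a.s.\ finite, and that backward returns to $\Om_0$ along the $\gth_e$-orbit also happen $Q$-a.s.---are both furnished by Claim \ref{The_model_claim} (apply it once to $e$ and once to $-e$). The proof then splits naturally into measure preservation and ergodicity.

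For measure preservation, I would partition $\Om_0$ by the value of the return time, $A_k = \{\om \in \Om_0 : f_e(\om) = k\}$, which gives
\[
  \si_e^{-1}(B) \;=\; \bigsqcup_{k \geq 1} \bigl(A_k \cap \gth_e^{-k}(B)\bigr)
\]
for any measurable $B \subseteq \Om_0$. Since $\gth_e^k$ is $Q$-measure preserving and injective, the image $\gth_e^k(A_k \cap \gth_e^{-k}(B)) = \gth_e^k(A_k) \cap B$ has the same $Q$-measure as the original piece. A direct unpacking of definitions shows that $\gth_e^k(A_k)$ is exactly the set of $\om' \in \Om_0$ whose first \emph{backward} return to $\Om_0$ occurs at time $k$, and by the backward recurrence input these sets partition $B$ up to $Q$-null. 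Summing yields $Q(\si_e^{-1}(B)) = Q(B)$, and dividing by $Q(\Om_0)$ gives the $P$-measure preservation.

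For ergodicity, let $B \subseteq \Om_0$ satisfy $\si_e^{-1}(B) = B$ mod $P$-null. I would saturate $B$ to a $\gth_e$-invariant set on all of $\Om$ by taking all points whose next visit to $\Om_0$ lies in $B$:
\[
  \widetilde{B} = \{\om \in \Om : \gth_e^{r(\om)}(\om) \in B\}, \qquad r(\om) = \min\{k \geq 0 : \gth_e^k(\om) \in \Om_0\}.
\]
The verification that $\widetilde{B}$ is $\gth_e$-invariant mod $Q$-null is routine. For $\om \notin \Om_0$ it follows from $r(\gth_e \om) = r(\om) - 1$, which makes $\gth_e^{r(\gth_e\om)}(\gth_e\om)$ equal to $\gth_e^{r(\om)}(\om)$. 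For $\om \in \Om_0$ one checks $\gth_e^{r(\gth_e\om)}(\gth_e\om) = \si_e(\om)$, so the equivalence $\om \in \widetilde{B} \iff \gth_e\om \in \widetilde{B}$ reduces to $\om \in B \iff \om \in \si_e^{-1}(B)$, which is precisely the assumed invariance. Ergodicity of $\gth_e$ under $Q$ then forces $Q(\widetilde{B}) \in \{0,1\}$, and since $\widetilde{B} \cap \Om_0 = B$ by construction, this gives $Q(B) \in \{0, Q(\Om_0)\}$, hence $P(B) \in \{0,1\}$.

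The whole argument is standard Kakutani and no step looks genuinely difficult; the only places requiring care are the clean partition-by-return-time bookkeeping for measure preservation (identifying the images $\gth_e^k(A_k)$ as backward-return strata of $\Om_0$) and the invariance check for $\widetilde{B}$ at the boundary $\Om_0$ for ergodicity. Both are bookkeeping rather than conceptual obstacles.
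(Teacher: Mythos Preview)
Your proposal is correct and is essentially the same classical Kakutani induced-transformation argument that the paper gives (in its extended version; the short version simply cites \cite{BB06}). The only cosmetic differences are that the paper verifies measure preservation by showing the forward images $\si_e(A_k)$ are disjoint rather than computing $Q(\si_e^{-1}(B))$ via backward-return strata, and for ergodicity it saturates $B$ to $\bigcup_{k}\gth_e^k(B)$ instead of your first-hit set $\widetilde{B}$---but the underlying ideas are identical.
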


\ifarxiv
    Theorem \ref{induced_shift_ergodicy} will follow from a more general statement. Let $(\Delta,\mathfrak{C},\mu)$ be a probability space, and let  $T:\Delta\rightarrow\Delta$ be invertible, measure preserving and ergodic with respect to $\mu$. Let $A\in\mathfrak{C}$ be of $\mu$ positive measure, and define $n:A\rightarrow\BN\cup\{\infty\}$ by
    \[
        n(x)=\min\{k>0:T^k(x)\in A\}.
    \]
    The Poincar\'{e} recurrence theorem tells us that $\mu$ almost surely $n(x)<\infty$, and therefore we can define, up to a set of $\mu$ measure zero, the map $S:A\rightarrow A$ by
    \[
        S(x)=T^{n(x)}(x),~~~~~x\in A
    \]
    Then we have:

    \begin{lem}
        S is measure preserving and ergodic with respect to $\mu(\cdot|A)$. It is also almost surely invertible with respect to the same measure.\label{lem_induced_shift_ergodicy}
    \end{lem}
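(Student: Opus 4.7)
The statement is the classical Kakutani induced transformation theorem, so the plan is to follow the standard tower construction. The preparatory step is to introduce the return-time level sets $A_k = \{x \in A : n(x) = k\}$ for $k \geq 1$; by the Poincar\'e recurrence theorem these partition $A$ modulo a $\mu$-null set, and by construction $S$ acts on $A_k$ as $T^k$.

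To prove measure preservation, I would use a double-partition argument based on the invertibility of $T$. Applying Poincar\'e recurrence to $T^{-1}$ gives a backward return time $n_-(y) = \min\{k \geq 1 : T^{-k}(y) \in A\}$ defined for $\mu$-a.e.\ $y \in A$; the sets $\{T^k(A_k)\}_{k\geq 1}$ coincide with the level sets of $n_-$ and thus form a second partition of $A$ modulo a null set. Then for measurable $B \subset A$, the decomposition $S^{-1}(B) = \bigsqcup_{k \geq 1} A_k \cap T^{-k}(B)$, combined with applying the measure-preserving map $T^k$ to each piece, yields
\[
\mu(S^{-1}(B)) \;=\; \sum_{k \geq 1} \mu\!\left(T^k(A_k)\cap B\right) \;=\; \mu(B),
\]
which after normalization by $\mu(A)$ gives invariance under $S$ of $\mu(\cdot\,|\,A)$. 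Almost-sure invertibility of $S$ comes for free from the same decomposition: $y \mapsto T^{-n_-(y)}(y)$ is a $\mu$-a.s.\ defined two-sided inverse of $S$.

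For ergodicity, I would let $B \subset A$ be $S$-invariant and build the tower
\[
\widetilde{B} \;:=\; \bigsqcup_{k \geq 1}\;\bigsqcup_{j=0}^{k-1} T^j(B \cap A_k),
\]
and aim to show that $\widetilde B$ is $T$-invariant. Within any column of the tower, $T$ simply raises the level index $j \mapsto j+1$; at the top of a column ($j = k-1$), the image $T(x) = S(x)$ lies in $B$ by the $S$-invariance hypothesis, and therefore sits at the ground level of another column of $\widetilde B$. A symmetric argument, using $S^{-1}$-invariance of $B$ (equivalent to $S$-invariance since $S$ is a.s.\ invertible), handles $T^{-1}(\widetilde B) \subset \widetilde B$. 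Ergodicity of $T$ then forces $\mu(\widetilde B) \in \{0,1\}$, and combining the identity $\mu(\widetilde B) = \sum_k k\,\mu(B\cap A_k)$ with Kac's identity $\sum_k k\,\mu(A_k)=\mu(\Delta)=1$ converts the dichotomy into $\mu(B\,|\,A) \in \{0,1\}$.

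The one step needing care is the verification of $T$-invariance of $\widetilde B$ at the tops of columns, since that is precisely where the $S$-invariance of $B$ enters; everything else reduces to bookkeeping about return-time decompositions and repeated application of the fact that $T$ preserves $\mu$.
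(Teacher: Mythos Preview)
Your proof is correct and follows the standard Kakutani tower route. The paper's argument is shorter and avoids some of your machinery. For measure preservation, instead of introducing the backward return time $n_-$ and arguing that $\{T^k(A_k)\}_{k\geq 1}$ partitions $A$, the paper simply observes that invertibility of $T$ forces the images $S(A_i)$ to be pairwise disjoint, and then uses that $S|_{A_j}=T^j$ is measure preserving on each piece; this gives $\mu(S(C))=\mu(C)$ for measurable $C\subset A$ in one line. For ergodicity, the paper does not build the full tower $\widetilde B$ or appeal to Kac's formula: given $S$-invariant $B\subset A$ with $0<\mu(B)<\mu(A)$, it just takes $C=\bigcup_{k}T^k(B)$, notes that $C$ is $T$-invariant up to a null set, and checks $0<\mu(C)<1$ by showing $C\cap A\subset B$ (since any $T^k(x)$ with $x\in B$ that lands in $A$ must already lie in $B$). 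Your approach is the textbook one and has the advantage of being completely explicit about where each hypothesis enters; the paper's version trades that transparency for brevity, in particular sidestepping the tower disjointness verification and the Kac identity entirely.
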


    \begin{proof}$~$\\
        (1) S is measure preserving: For $j\geq 1$, let $A_j=\{x\in A:n(x)=j\}$. Then $\{A_j\}_{j\in\BN}$ are disjoint and $\mu\left(A\backslash\cup_{j\in\BN}{A_j}\right)=0$. First we show that
        \[
            i\neq j \Rightarrow S(A_i)\cap S(A_j)=\emptyset.
        \]
        To do this, we use the fact that $T$ is invertible. Indeed, if $x\in S(A_i)\cap S(A_j)$ for $1\leq i<j$, then $x=T^i(y)=T^j(z)$ for some $y,z\in A$ with  $n(y)=i,~n(z)=j$. But the fact that $T$ is invertible implies that $y=T^{j-i}(z)$, which means $n(z)\leq j-i<j$, a contradiction. To see that $S$ is  measure preserving, we note that the restriction of $S$ to $A_j$ is $T^j$, which is measure preserving. Hence, $S$ is measure preserving on $A_j$ and,  since the sets $A_i$ are disjoint, $S$ is measure preserving on the union $\cup_{j\in\BN}{A_j}$ as well.\\
        (2) $S$ is almost surely invertible: $S^{-1}(\{x\})\cap\{S~is~well~defined\}$ is a one-point set by the fact that $T$ is itself invertible.\\
        (3) $S$ is ergodic: Let $B\in\mathfrak{C}$ be such that  $B\subset A$ and $0<\mu(B)<\mu(A)$. Assume that $B$ is $S$-invariant. Then $S^n(x)\notin A\backslash B$ for all $x\in B$ and all $n\geq 1$. This implies that for every $x\in B$ and every $k\geq 1$ such that $T^k(x)\in A$, we have $T^k(x)\notin A\backslash B$. Thus $C=\cup_{k\in\BN}{T^k(B)}$ is (almost surely) $T$-invariant and $\mu(C)\in(0,1)$, contradicting the ergodicity of $T$.
    \end{proof}

    \begin{proof}[Proof of Theorem (\ref{induced_shift_ergodicy})]
        We know that the shift $\gth_e$ is invertible, measure preserving and ergodic with respect to $Q$. By Lemma (\ref{lem_induced_shift_ergodicy}) the  induced shift is $P$-preserving, almost surely invertible and ergodic with respect to $P$.
    \end{proof}

    Under the present circumstances, Theorem \ref{induced_shift_ergodicy} has the following important corollary:

    \begin{lem}
        Let $B\in\mathfrak{B}$ be a subset of $\Om_0$ such that for $P$ almost every $\om\in B$
        \begin{equation}
            P_\om(\gth_{X_1}\om\in B)=1. \label{The_Lemma_0-1_Law}
        \end{equation}
        Then $B$ is a zero-one event under $P$. \label{lemma_sec2}
    \end{lem}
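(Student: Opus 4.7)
The plan is to translate the hypothesis into $\sigma_e$-invariance for each coordinate direction $e\in\CE$, and then invoke Theorem \ref{induced_shift_ergodicy} to conclude the $0$-$1$ dichotomy.

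First I would unpack what $\theta_{X_1}\om\in B$ means. From \eqref{transition_probability}, conditionally on $\om$, $X_1$ is uniform on $N_0(\om)=\{f_e(\om)e:e\in\CE\}$, so $\theta_{X_1}\om$ takes each of the values $\{\theta_e^{f_e(\om)}\om\}_{e\in\CE}=\{\si_e(\om)\}_{e\in\CE}$ with probability $1/(2d)$. The assumption $P_\om(\theta_{X_1}\om\in B)=1$ therefore forces
\[
    \si_e(\om)\in B\qquad\text{for every }e\in\CE,
\]
for $P$-a.e.\ $\om\in B$. In other words, up to a $P$-null set, $B\subseteq\si_e^{-1}(B)$ for each $e\in\CE$.

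Next I would upgrade this one-sided inclusion to equality modulo null sets. By Theorem \ref{induced_shift_ergodicy} the induced shift $\si_e$ preserves $P$, so $P(\si_e^{-1}(B))=P(B)$, and combined with $B\subseteq \si_e^{-1}(B)$ (mod $P$) this yields $\si_e^{-1}(B)=B$ up to a $P$-null set. Thus $B$ is a $\si_e$-invariant event.

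Finally I would invoke the ergodicity half of Theorem \ref{induced_shift_ergodicy}: since $\si_e$ is ergodic with respect to $P$, any $\si_e$-invariant event has $P$-measure in $\{0,1\}$, so $P(B)\in\{0,1\}$, as claimed. There is no real obstacle; the only delicate point is the routine bookkeeping to pass from ``a.e.\ $\om\in B$'' to a set-theoretic equality modulo a $P$-null set, which is handled by the measure-preserving property of $\si_e$.
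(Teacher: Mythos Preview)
Your proof is correct and follows essentially the same approach as the paper's: both show that the hypothesis forces $\sigma_e(\omega)\in B$ for $P$-a.e.\ $\omega\in B$, hence $B$ is $\sigma_e$-invariant modulo null sets, and then appeal to the ergodicity of $\sigma_e$ from Theorem~\ref{induced_shift_ergodicy}. Your write-up is slightly more explicit than the paper's in upgrading the one-sided inclusion $B\subseteq\sigma_e^{-1}(B)$ to equality via measure preservation, while the paper additionally records (but does not really use) the extension $P_\omega(\theta_{X_n}\omega\in B)=1$ for all $n\geq 1$.
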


\else
\fi
\ifarxiv

    \begin{proof}
        The Markov property and (\ref{The_Lemma_0-1_Law}) imply that $P_\om(\gth_{X_n}\om\in B)=1$ for all $n\geq 1$ and $P$ almost every $\om\in B$. We claim that $\si_e(\om)\in B$ for $P$ almost every $\om\in B$. Indeed, let $\om\in B$ be such that $\gth_{X_n}\om\in B$ for all $n\geq 1$, $P_\om$ almost surely. Since $P_\om(X_1=f_e(\om)e)=\frac{1}{2d}>0$, we have that  $\si_e(\om)=\gth^{n(\om)}_e(\om)\in B$, i.e., $B$ is almost surely $\si_e$-invariant. By the ergodicy of the induced shift, $B$ is a zero-one event.
    \end{proof}

\else

The proof of Theorem \ref{induced_shift_ergodicy} can be found in \cite{BB06} (Theorem 3.2).

\fi

Our next goal is to prove that the Markov chain on environments (i.e.~the Markov chain given by the environment viewed from the particle) is ergodic. Let $\Xi=\Om_0^{\BZ}$ and define $\mathscr{B}$ to be the product $\si$-algebra on $\Xi$. The space $\Xi$ is a space of two-sided sequences  $(\ldots,\om_{-1},\om_0,\om_1,\ldots)$, the trajectories of the Markov chain on environments. Let $\mu$ be the measure on $(\Xi,\mathscr{B})$ such that for any $B\in\mathscr{B}^{2n+1}$ (coordinates between $-n$ and $n$),
\[
    \mu\big((\om_{-n},\ldots,\om_n)\in B\big)=\int_{B}{P(d\om_{-n})\Lambda(\om_{-n},d\om_{-n+1})\ldots\Lambda(\om_{n-1},d\om_n)},
\]
where $\Lambda:\Om_0\times\mathfrak{B}\rightarrow [0,1]$ is the Markov kernel defined by
\begin{equation}
    \Lambda(\om,A)=\frac{1}{2d}\sum_{x\in\BZ^d}{\mathbbm{1}_{\{x\in N_0(\om)\}}\mathbbm{1}_{\{\gth_x\om\in A\}}}=\frac{1}{2d}\sum_{e\in\CE}\ind_{\{\si_e(\om)\in A\}}. \label{Lambda}
\end{equation}
Note that the sum is finite since for $Q$ almost every $\om\in\Om$ there are exactly $2d$ elements in $N_0(\om)$. Because $P$ is preserved by $\Lambda$ (see  Theorem \ref{induced_shift_ergodicy}), the finite dimensional measures are consistent, and therefore by Kolmogorov's theorem $\mu$ exists and is unique. One can see from the definition of $\mu$ that $\{\gth_{X_k}(\om)\}_{k\geq 0}$ has the same law under $\BP$ (the annealed law) as $(\om_0,\om_1,\ldots)$ has under $\mu$. Let $\widetilde{T}:\Xi\rightarrow\Xi$ be the shift defined by $(\widetilde{T}\om)_n=\om_{n+1}$. The definition of $\widetilde{T}$ implies that it is measure preserving. In fact the following also holds:

\begin{prop}
    $\widetilde{T}$ is ergodic with respect to $\mu$. \label{ergodic_particle_view}
\end{prop}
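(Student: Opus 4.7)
My plan is to establish ergodicity of $\widetilde{T}$ via the standard criterion for shifts on trajectories of stationary Markov chains: $\widetilde{T}$ is ergodic with respect to $\mu$ if and only if every bounded measurable $h:\Om_0\to\BR$ satisfying $\Lambda h=h$ $P$-a.s.\ is $P$-a.s.\ constant. I will then verify this functional criterion using the induced shifts $\si_e$, whose ergodicity is Theorem \ref{induced_shift_ergodicy}.

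For the criterion itself, given a $\widetilde{T}$-invariant event $A\in\mathscr{B}$, I would introduce $g(\om):=\ev_\mu[\ind_A\mid\om_0=\om]$ and show it is $\Lambda$-harmonic by combining the $\widetilde{T}$-invariance of $\ind_A$ with the Markov property and stationarity of the chain (writing $\ev_\mu[\ind_A\mid\om_0]=\ev_\mu[\ind_A\circ\widetilde{T}\mid\om_0]$ and then inserting $\om_1$ via the tower property). Martingale convergence identifies the $\mu$-a.s.\ limit of $\ev_\mu[\ind_A\mid\om_k:k\leq n]=g(\om_n)$ with $\ind_A$, so constancy of $g$ forces $\mu(A)\in\{0,1\}$.

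The substantive step is to verify that every bounded $\Lambda$-harmonic $h$ is $P$-a.s.\ constant. Since $P$ is $\Lambda$-invariant (a consequence of Theorem \ref{induced_shift_ergodicy}), the sequence $(h(\om_n))_{n\geq 0}$ is a bounded stationary martingale under $\mu$. A one-line stationary-martingale computation gives
\[
    \ev_\mu\!\left[(h(\om_1)-h(\om_0))^2\right]=2\,\ev_\mu[h(\om_0)^2]-2\,\ev_\mu[h(\om_0)\,\Lambda h(\om_0)]=0,
\]
so $h(\om_1)=h(\om_0)$ $\mu$-a.s. By the explicit form of $\Lambda$ in \eqref{Lambda}, this gives $h\circ\si_e=h$ $P$-a.s.\ for every $e\in\CE$, and Theorem \ref{induced_shift_ergodicy} then forces $h$ to be $P$-a.s.\ constant.

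The main obstacle is the reduction step: passing cleanly from a $\widetilde{T}$-invariant event on the two-sided path space $\Xi$ to a genuine $\Lambda$-harmonic function on $\Om_0$ requires careful bookkeeping around the invertibility of $\widetilde{T}$ and the Markov property of the environment-seen-from-the-particle chain. Once that is in place, the harmonic-to-constant implication — the heart of the proposition — is a short stationary-martingale identity coupled with the ergodicity of the induced shifts $\si_e$ supplied by Theorem \ref{induced_shift_ergodicy}.
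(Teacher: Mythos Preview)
Your plan is sound and lands in the same place as the paper's proof --- ergodicity of the induced shifts $\si_e$ --- but reaches it by a genuinely different intermediate step. The paper, after defining $f(\om_0)=\ev_\mu[\ind_A\mid\om_0]$, first proves $f=\ind_A$ $\mu$-a.s.\ (via the $A_+/A_-$ approximation trick and L\'evy's martingale theorem), then applies a set-level zero--one law (Lemma~\ref{lemma_sec2}): since $A$ is $\widetilde{T}$-invariant, $\om_0\in B:=\{f=1\}$ forces $\om_1\in B$, hence $B$ is $\si_e$-invariant, hence trivial. You instead factor through the functional statement ``bounded $\Lambda$-harmonic implies $P$-a.s.\ constant'' and prove that by the Dirichlet-form identity $\BE_P[(h-\Lambda h)h]=0\Rightarrow h\circ\si_e=h$. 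Your route is slightly more general (it handles all bounded harmonic $h$, not just indicators) and makes the role of stationarity transparent; the paper's route is more self-contained and avoids introducing harmonic functions at all.

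One point to sharpen: your sketched reduction ``$g(\om_0)=\ev_\mu[\ind_A\mid\om_0]$, then insert $\om_1$ by the tower property'' does \emph{not} give $\Lambda g=g$ directly on the two-sided space, because $\ind_A\circ\widetilde{T}$ still depends on the coordinate $\om_0$ (which sits at time $-1$ after the shift), and conditioning on $(\om_0,\om_1)$ gives the shifted past a different law than conditioning on $\om_1$ alone. The same issue blocks the identity $\ev_\mu[\ind_A\mid\om_k:k\le n]=g(\om_n)$. The fix is exactly the paper's $A_+/A_-$ step: any $\widetilde{T}$-invariant $A$ agrees up to null sets with some $A_+\in\si(\om_k:k\ge 0)$, and for $A_+$ your Markov/tower argument goes through verbatim to give $\Lambda g=g$ and $g(\om_n)\to\ind_{A_+}=\ind_A$. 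So the ``careful bookkeeping'' you flag is precisely this one-sided approximation; once it is in place, your stationary-martingale computation and the appeal to Theorem~\ref{induced_shift_ergodicy} finish the proof cleanly.
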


\ifarxiv

    \begin{proof}
        Let $E_\mu$ denote expectation with respect to $\mu$. Pick $A\subset\Xi$ that is measurable and $\widetilde{T}$-invariant. We need to show that  $\mu(A)\in\{0,1\}$. Let $f:\Om_0\rightarrow\BR$ be defined as $f(\om_0)=E_\mu[\mathbbm{1}_A|\om_0].$ First we claim that $f=\mathbbm{1}_A$ almost surely. Indeed, since $A$ is $\widetilde{T}$-invariant, there exist $A_+\in\si(\om_k:k>0)$ and $A_-\in\si(\om_k:k<0)$ such that $A$ and $A_\pm$ differ only by null sets from one another (This follows by approximation of $A$ by finite-dimensional events and using the $\widetilde{T}$-invariance of $A$). Now, conditional on $\om_0$, the event $A_+$ is independent of $\si(\om_k:k<0)$ and so L$\acute{e}$vy's Martingale Convergence Theorem gives us
        \[
            E_\mu[\mathbbm{1}_A|\om_0]=E_\mu[\mathbbm{1}_{A_+}|\om_0,\om_{-1},\ldots,\om_{-n}]
        \]
        \[
            =E_\mu[\mathbbm{1}_{A_-}|\om_0,\ldots,\om_{-n}]~\overrightarrow{_{n\rightarrow\infty}}~\mathbbm{1}_{A_-}=\mathbbm{1}_A,
        \]
        with equalities valid $\mu$-almost surely. Next let $B\subset\Om_0$ be defined by $B=\{\om_0:f(\om_0)=1\}$. Clearly $B$ is $\mathfrak{B}$-measurable and, since the $\om_0$-marginal of $\mu$ is $P$,
        \[
            \mu(A)=E_\mu[f]=P(B).
        \]
        Hence, in order to prove that $\mu(A)\in\{0,1\}$, we need to show that  $P(B)\in\{0,1\}$ But $A$ is $\widetilde{T}$-invariant and so, up to set of measure zero, if $\om_0\in B$ then $\om_1\in B$. This means that $B$ satisfies the condition of the lemma \ref{lemma_sec2}, and so $B$ is a zero-one event.
    \end{proof}

\else

As before, a proof can be found in section $3$ of \cite{BB06} (Proposition 3.5).

\fi

\begin{thm}
    Let $f\in L^1(\Om_0,\mathfrak{B},P)$. Then for $P$ almost every $\om\in\Om_0$
    \[
        \lim_{n\rightarrow\infty}{\frac{1}{n}\sum_{k=0}^{n-1}{f\circ\gth_{X_k}(\om)}=\BE_P[f]},\quad P_\om~almost~surely.
    \]
    Similarly, if $f:\Om\times\Om\rightarrow\BR$ is measurable with $\BE\left[f(\om,\gth_{X_1}\om)\right]<\infty$, then
    \[
        \lim_{n\rightarrow\infty}{\frac{1}{n}\sum_{k=0}^{n-1}{f(\gth_{X_k}\om,\gth_{X_{k+1}}\om)}=\BE\left[f(\om,\gth_{X_1}\om)\right]}
    \]
    for $P$ almost every $\om$ and $P_\om$ almost every trajectory of $(X_k)_{k\geq 0}$. \label{Thm_mutual_ergodic}
\end{thm}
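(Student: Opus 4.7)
The plan is to deduce both assertions directly from Birkhoff's pointwise ergodic theorem applied to the shift $(\Xi, \mathscr{B}, \mu, \widetilde{T})$, whose ergodicity is exactly Proposition \ref{ergodic_particle_view}, and then to transfer the resulting $\mu$-almost sure statements to the desired quenched form via the identification (already recorded in the construction of $\mu$) of the one-sided process $(\omega_0, \omega_1, \ldots)$ under $\mu$ with the annealed environment-viewed-from-the-particle process $(\theta_{X_k}\omega)_{k \geq 0}$ under $\mathbf{P}$.

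For the first assertion, I would set $\tilde{f} : \Xi \to \mathbb{R}$ by $\tilde{f}((\omega_n)_{n\in\mathbb{Z}}) = f(\omega_0)$. Since the $\omega_0$-marginal of $\mu$ is $P$, we have $\tilde{f} \in L^1(\mu)$ with $\mathbb{E}_\mu[\tilde{f}] = \mathbb{E}_P[f]$, and $\tilde{f}\circ\widetilde{T}^k(\bar\omega) = f(\omega_k)$. Birkhoff's theorem gives
\[
\lim_{n\to\infty} \frac{1}{n}\sum_{k=0}^{n-1} f(\omega_k) = \mathbb{E}_P[f] \qquad \mu\text{-a.s.}
\]
By the identification of one-sided marginals, the same limit holds $\mathbf{P}$-almost surely with $\omega_k$ replaced by $\theta_{X_k}\omega$, and disintegrating $\mathbf{P} = \int P_\omega\, P(d\omega)$ via Fubini yields the quenched form: for $P$-almost every $\omega \in \Omega_0$, the convergence holds $P_\omega$-almost surely.

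For the second assertion, I would analogously set $\tilde{g} : \Xi \to \mathbb{R}$ by $\tilde{g}((\omega_n)_{n\in\mathbb{Z}}) = f(\omega_0, \omega_1)$. The joint law of $(\omega_0, \omega_1)$ under $\mu$ agrees, by the construction of $\mu$ via the kernel $\Lambda$, with the annealed joint law of $(\omega, \theta_{X_1}\omega)$, so $\tilde{g} \in L^1(\mu)$ with $\mathbb{E}_\mu[\tilde{g}] = \mathbb{E}[f(\omega, \theta_{X_1}\omega)]$. Since $\tilde{g}\circ\widetilde{T}^k(\bar\omega) = f(\omega_k, \omega_{k+1})$, applying Birkhoff to $(\widetilde{T}, \tilde{g})$ and transferring as in the first case gives the desired conclusion.

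I do not foresee a serious obstacle: once Proposition \ref{ergodic_particle_view} is in hand, the ergodicity of $\widetilde{T}$ does the actual work, and the only point that demands attention is the routine bookkeeping that converts $\mu$-almost sure convergence on $\Xi$ into the quenched form on the product space $(\Omega_0 \times (\mathbb{Z}^d)^{\mathbb{N}}, \mathbf{P})$. Integrability of $\tilde{f}$ and $\tilde{g}$ is immediate from the marginal identifications, and no further ergodic input is needed.
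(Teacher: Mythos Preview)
Your proposal is correct and matches the paper's own proof essentially verbatim: define a function on $\Xi$ depending only on $\omega_0$ (respectively $(\omega_0,\omega_1)$), apply Birkhoff using the ergodicity of $\widetilde{T}$ from Proposition~\ref{ergodic_particle_view}, and transfer via the identification of the law of $(\theta_{X_k}\omega)_{k\ge 0}$ under $\mathbf{P}$ with that of $(\omega_k)_{k\ge 0}$ under $\mu$. If anything, your write-up is slightly more explicit than the paper's, which dispatches the second assertion with ``the second part follows from the first'' and leaves the Fubini disintegration implicit.
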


\begin{proof}
    Recall that $\{\gth_{X_k}(\om)\}_{k\geq 0}$ has the same law under $\BP$ as $(\om_0,\om_1,\ldots)$ has under $\mu$. Hence, if  $g(\ldots,\om_{-1},\om_0,\om_1,\ldots)=f(\om_0)$ then
    \[
        \lim_{n\rightarrow\infty}{\frac {1}{n}\sum_{k=0}^{n-1}{f\circ\gth_{X_k}}}\overset{D}{=}\lim_{n\rightarrow\infty}{\frac{1}{n}\sum_{k=0}^{n-1}{g\circ\widetilde{T}^k}}.
    \]
    The latter limit exists by Birkhoff's Ergodic Theorem (we have already seen that $\widetilde{T}$ is ergodic) and equals $E_\mu[g]=\BE_P[f]$ almost surely. The second part follows from the first.
\end{proof}


\section{Law of Large Numbers}\label{sec:LLN}

This section is devoted to the proof of Theorem \ref{LLN}, the Law of Large Numbers for random walks on discrete point processes.

\begin{proof}[Proof of Theorem \ref{LLN}]
    Using linearity, it is enough to prove that
    \[
        \BP\left(\left\{\lim_{n\rightarrow\infty}{\frac{X_n\cdot e}{n}}=0\right\}\right)=1,\quad\forall e\in\CE.
    \]
    Fix some $e\in\CE$ and define $S(k)=\max\left\{n\geq 0:\sum\limits_{m=0}^{n-1}{f(\si_e^m(\om))}<k\right\}$. Because $f_e$ is positive, if $\BE_P[f_e]=\infty$, then
    \begin{equation}
        \lim_{n\rightarrow\infty}{\frac{1}{n}\sum_{k=0}^{n-1}{f_e(\si_e^k(\om))}}=\infty,\quad P~a.s \ifarxiv.\else \nonumber\fi \label{lim_f}
    \end{equation}
%
%
    and therefore
    \[
        \lim_{k\rightarrow\infty}{\frac{S(k)}{k}}=0,\quad P~a.s.
    \]
    However, since $S(k)=\sum\limits_{j=0}^{k-1}{\ind_{\Om_0}(\gth_e^j(\om))}$, by Birkhoff's Ergodic Theorem and Assumption \ref{Assumptions}
    \[
        \lim_{k\rightarrow\infty}{\frac{S(k)}{k}}=\lim_{k\rightarrow\infty}{\frac{1}{k}\sum_{j=0}^{k-1}{\ind_{\Om_0}(\gth_e^j(\om))}}=Q(\Om_0)>0,\quad P~a.s.
    \]
    Thus $\BE_P[f_e]<\infty$. Applying Birkhoff Ergodic Theorem once more we get
    \begin{equation}
        \lim_{n\rightarrow\infty}{\frac{1}{n}\sum_{k=0}^{n-1}{f_e(\si_e^k(\om))}}=\BE_P[f_e]<\infty,\quad P~a.s.\label{Finite_sum_of_changes}
    \end{equation}

    The stationarity of $P$ with respect to $\sigma_e$ implies that $P(f_{-e}(\om)=k)=P(f_e(\si_e^{-1}(\om))=k)=P(f_e(\om)=k)$, and therefore
    \begin{equation}
        \BE_P[f_e]=\BE_P[f_{-e}]. \label{Expectation_equality}
    \end{equation}

    Let $g_e:\Om\times\Om\rightarrow\BZ$ be defined by:
    \footnotesize{\begin{equation}
    \begin{aligned}
        g_e(\om,\om')=
        \left\{
        \begin{array}{cll}
            f_e(\om) & \quad &\text{if }\om'=\si_e(\om) \\
            -f_{-e}(\om) & \quad & \text{if }\om'=\si_{-e}(\om) \\
            0 & \quad & \text{otherwise}
        \end{array}
        \right..
    \end{aligned}
    \nonumber
    \end{equation}}\normalsize{}
    Observing that $g_e$ is measurable and recalling \eqref{Expectation_equality} we get that
    \[
        \BE_P\left[E_\om(g_e(\om,\gth_{X_1}\om))\right]=\BE_P\left[\frac{1}{2d}f_e(\om)-\frac{1}{2d}f_{-e}(\om)\right]=0.
    \]
    Thus for $P$ almost every $\om\in\Om_0$ and $P_\om$ almost every random walk $\{X_k\}_{k\geq 0}$, we have by Theorem \ref{Thm_mutual_ergodic} \begin{equation}
    \begin{aligned}
        \lim_{n\rightarrow\infty}{\frac{X_n\cdot e}{n}} &=\lim_{n\rightarrow\infty}{\frac{1}{n}\sum_{k=1}^{n}{(X_k-X_{k-1})\cdot e}}\\
        &= \lim_{n\rightarrow\infty}{\frac{1}{n}\sum_{k=0}^{n-1}{g_e(\gth_{X_k}\om,\gth_{X_{k+1}}\om)}}= \BE_P\left[E_\om(g_e(\om,\gth_{X_1}\om))\right]=0. \nonumber
    \end{aligned}
    \end{equation}
\end{proof}


\section{One Dimensional Central Limit Theorem}\label{sec:one_dim_CLT}

This section is devoted to the proof of Theorem \ref{CLT1} - Central Limit Theorem of one-dimensional random walks on discrete point processes. The basic observation of the proof is the fact that random walk on discrete point processes in one dimension is in fact a simple random walk on $\BZ$ with stretched edges. Combining this with the fact that $E_P[f_1]<\infty$ implies the result. We turn to make this into a more precise argument:

\begin{proof}[Proof of Theorem \ref{CLT1}]
    Denote $e=1$. Given an environment $\om\in\Om_0$ and a random walk $\{X_k\}_{k\geq 0}$, we define the simple one-dimensional random walk $\{Y_k\}_{k\geq 0}$ associated with $\{X_k\}_{k\geq 0}$ by:
    \begin{equation}
    Y_{k}=\begin{cases}
        \sum\limits_{j=1}^{k}\frac{X_{j}-X_{j-1}}{\left|X_{j}-X_{j-1}\right|} & \quad k\geq1\\
        0 & \quad k=0
    \end{cases}.
    \nonumber
    \end{equation}
    Since $\{Y_k\}_{k\geq 0}$ is a simple one dimensional random walk on $\BZ$, it follows from the Central Limit Theorem that for $P$ almost every $\om\in\Om_0$
    \begin{equation}
        \lim_{n\rightarrow\infty}{\frac{1}{\sqrt{n}}\cdot Y_n}\overset{D}{=}N(0,1).\label{eq:simple_CLT}
    \end{equation}

    Given an environment $\om\in\Om_0$ and $n\in\BZ$ let $\Fp_n=\Fp_n(\om)$ be the $n^{th}$ point in $\CP(\om)$ (with respect to $0$). More precisely denote \begin{equation}
        \Fp_n
        =\begin{cases}
            \sum\limits_{k=0}^{n-1}{f_{e}(\si_{e}^{k}\om)}\vphantom{\underset{|}{A}} & \quad n>0 \\
            0 & \quad n=0\\
            \sum\limits_{k=-1}^{-n}{f_{e}(\si_{e}^{k}\om)} & \quad n<0
        \end{cases}.
    \end{equation}

    For every $a\in\BR\backslash\{0\}$ and $P$ almost every $\om\in\Om_0$ we have
    \[
        \lim_{n\rightarrow\infty}{\frac{1}{\sqrt{n}}\Fp_{\lfloor a\sqrt{n}\rfloor}}
        =a\cdot\lim_{n\rightarrow\infty}{\frac{1}{a\sqrt{n}}\sum_{k=0}^{\lfloor
        a\sqrt{n}\rfloor}{f_{e}(\si_e^k\om)}}=a\cdot \BE_P[f_{e}].
    \]
    \ifarxiv
    Indeed, the last equality holds since this sequence contains the same elements as the sequence in (\ref{Finite_sum_of_changes}) and every element in the original sequence appears only a finite number of times, therefore those sequences have the same partial limits, and the original sequence (the one in  \eqref{Finite_sum_of_changes}) converges $P$ almost surely.
    \fi
    In fact the last argument also holds trivially for $a=0$, i.e.~for every $a\in\BR$
    \begin{equation}
        \lim_{n\rightarrow\infty}{\frac{1}{\sqrt{n}}\Fp_{\lfloor a\sqrt{n}\rfloor}}=a\cdot \BE_P[f_{e}]. \label{limit_CLT_constant}
    \end{equation}

    Using \eqref{eq:simple_CLT} and \eqref{limit_CLT_constant} we get that for $P$ almost every $\om\in\Om_0$ and every $\ep>0$
    \footnotesize{
    \begin{equation}
        \begin{array}{rcl}
            \lim\limits_{n\rightarrow\infty}P_\om\left(\frac{\Fp_{Y_n}}{\sqrt{n}}\leq a\right)&\leq& \lim\limits_{n\rightarrow\infty}P_\om\left(\frac{\Fp_{Y_n}}{\sqrt{n}}\leq a~,~\frac{Y_n}{\sqrt{n}}>\frac{a}{E_P[f_e]}+\ep\right)+P_\om\left(\frac{Y_n}{\sqrt{n}}\leq\frac{a}{E_P[f_e]}+\ep\right)\\
            &\leq&\lim\limits_{n\rightarrow\infty}P_\om\left(\frac{1}{\sqrt{n}}\Fp_{\lfloor\left(\frac{a}{E_P[f_e]}+\ep\right)\sqrt{n}\rfloor}\leq a\right)+P_\om\left(\frac{Y_n}{\sqrt{n}}\leq\frac{a}{E_P[f_e]}+\ep\right)\\
            &=&\Phi\left(\frac{a}{E_P[f_e]}+\ep\right)
        \end{array}
    \nonumber
    \end{equation}
    }\normalsize{}
    where $\Phi$ is the standard normal cumulative distribution function. A similar argument gives that  $\lim_{n\rightarrow\infty}P_\om\left(\frac{\Fp_{Y_n}}{\sqrt{n}}\leq a\right)\geq \Phi\left(\frac{a}{E_P[f_e]}-\ep\right)$ for every $\ep>0$. Observing that $X_n=\Fp_{Y_n}$ and recalling that $\ep>0$ was arbitrary we get
    \begin{equation}
        \lim_{n\rightarrow\infty}P_\om\left({\frac{X_n}{\sqrt{n}}}\leq a\right)=\Phi\left(\frac{a}{\BE_P[f_{e}]}\right),
    \end{equation}
    as required.
\end{proof}


\section{Transience and Recurrence}\label{sec:Transience_Recurrence}

Before continuing to deal with the Central Limit Theorem in higher dimensions, we turn to a discussion on transience-recurrence of random walks on discrete point processes.


\subsection{One-dimensional case}

Here we wish to prove the recursive behavior of the one-dimensional random walk on discrete point processes (Proposition \ref{tran_recu1}). This follows from the same coupling introduced in order to prove the CLT.

\begin{proof}[Proof of Proposition \ref{tran_recu1}]
    Using the notation from the previous section, since $Y_n$ is a one-dimensional simple random walk, it is recurrent $\BP$ almost surely. Therefore we have  $\#\{n:Y_n=0\}=\infty~~\BP$ almost surely, but since $X_n=\Fp_{Y_n}$ and $\Fp_0=0$ this implies $\#\{n:X_n=0\}=\infty$, $\BP$ almost surely. Thus the random walk is recurrent.
\end{proof}


\subsection{Two-dimensional case}

In this section we deal with the two-dimensional case. The proof is based on the correspondence of random walks to electrical networks. Recall that an electrical network is given by a triple $G=(V,E,c)$, where $(V,E)$ is an unoriented graph and $c:E\rightarrow (0,\infty)$ is a conductance field. We start by recalling the Nash-Williams criterion for recurrence of random walks:

\begin{thm}[Nash-Williams criterion]
    A set of edges $\Pi$ is called a cutset for an infinite network $G=(V,E,c)$ if there exists some vertex $a\in V$ such that every infinite simple path from $a$ to infinity must include an edge in $\Pi$. If $\{\Pi_n\}$ is a sequence of pairwise disjoint finite cutsets in a locally finite infinite graph $G$, each of which separates $a\in V$ from infinity and $\sum_n\left(\sum_{e\in\Pi_n}c(e)\right)^{-1}=\infty$, then the random walk induced by the conductances $c$ is recurrent.
\end{thm}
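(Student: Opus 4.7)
The plan is to invoke the classical characterization of recurrence for reversible random walks on weighted graphs: the walk induced by conductances $c$ is recurrent if and only if every unit flow from $a$ to infinity has infinite energy, where the energy of a flow $\theta$ on the (oriented) edges is $\sum_e \theta(e)^2/c(e)$. Equivalently, the effective resistance $R_{\mathrm{eff}}(a,\infty)$ is infinite. Granting this tool (Thomson/Dirichlet variational principle plus the Polya-type criterion), the theorem reduces to showing that the cutset-divergence hypothesis forces every unit flow from $a$ to infinity to have infinite energy.

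First I would fix an arbitrary unit flow $\theta$ from $a$ to infinity. For each $n$, since $\Pi_n$ separates $a$ from infinity, the finite set $A_n$ of vertices on the $a$-side of $\Pi_n$ is finite, and applying Kirchhoff's flow-conservation law vertex by vertex in $A_n$ shows that the net signed flow across $\Pi_n$ (oriented away from $a$) equals the total source strength $1$. In particular
\[
\sum_{e\in\Pi_n}|\theta(e)|\geq 1.
\]

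Next I would apply the Cauchy--Schwarz inequality within each cutset:
\[
1\leq\left(\sum_{e\in\Pi_n}|\theta(e)|\right)^{2}=\left(\sum_{e\in\Pi_n}\sqrt{c(e)}\cdot\frac{|\theta(e)|}{\sqrt{c(e)}}\right)^{2}\leq\left(\sum_{e\in\Pi_n}c(e)\right)\left(\sum_{e\in\Pi_n}\frac{\theta(e)^{2}}{c(e)}\right),
\]
which rearranges to $\sum_{e\in\Pi_n}\theta(e)^{2}/c(e)\geq\bigl(\sum_{e\in\Pi_n}c(e)\bigr)^{-1}$. Since the cutsets $\{\Pi_n\}$ are pairwise disjoint, summing over $n$ yields
\[
\sum_{e}\frac{\theta(e)^{2}}{c(e)}\geq\sum_{n}\sum_{e\in\Pi_n}\frac{\theta(e)^{2}}{c(e)}\geq\sum_{n}\left(\sum_{e\in\Pi_n}c(e)\right)^{-1}=\infty,
\]
so $\theta$ has infinite energy. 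Since $\theta$ was arbitrary, the walk is recurrent.

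The argument is essentially textbook: the real content is the Cauchy--Schwarz estimate applied in each cutset combined with disjointness to exchange the sums. There is no genuine obstacle here, as the paper is recording the Nash--Williams criterion as a citable tool for the subsequent two-dimensional recurrence proof rather than proving new material; the only step that requires care is verifying flow conservation across a cutset, which uses local finiteness of $G$ to ensure $A_n$ is finite and hence flow conservation sums cleanly.
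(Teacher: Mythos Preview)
Your argument is the standard textbook proof and is correct. The paper does not actually prove this statement: immediately after stating the theorem it writes ``For a proof of the Nash-Williams criterion and some background on the subject see \cite{DS84} and \cite{LP97}'' and then uses it as a black box in the proof of Theorem~\ref{tran_recu2}. So there is nothing to compare against; you have simply supplied the classical energy/Cauchy--Schwarz proof that those references contain, and your closing remark that the paper is merely recording the criterion as a citable tool is exactly right.
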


For a proof of the Nash-Williams criterion and some background on the subject see \cite{DS84} and \cite{LP97}. The following definition will be used in the proof:

\begin{defn}
    Let $(\widetilde{\Om},\widetilde{\mathfrak{B}},\widetilde{P})$ be a probability space. We say that a random variable $X:\widetilde{\Om}\rightarrow [0,\infty)$ has a Cauchy tail if there exists a positive constant $C$ such that $\widetilde{P}\left(X\geq n\right)\leq \frac{C}{n}$ for every $n\in \BN$.
\end{defn}

Note that if $\widetilde{E}[X]<\infty$, then X has a Cauchy tail.

In order to prove Theorem \ref{tran_recu2} we will need the following lemmas taken from \cite{Be01}.

\begin{lem}[\cite{Be01} Lemma 4.1]
    Let $\{f_i\}_{i=1}^{\infty}$ be identically distributed (not necessarily independent) positive random variables, on a probability space  $(\widetilde{\Om},\widetilde{\mathfrak{B}},\widetilde{P})$, that have a Cauchy tail. Then, for every $\epsilon>0$, there exist $K>0$ and $N\in\BN$ such that  for every $n>N$
    \[
        \widetilde{P}\left(\frac{1}{n}\sum_{i=1}^{n}{f_i}>K\log n\right)<\epsilon.
    \]
    \label{Cauchy_tail_lemma}
\end{lem}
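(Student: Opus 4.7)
The idea is to split the event into a ``big value'' part (some $f_i$ is unusually large) and a ``truncated average'' part, then control the first by a union bound and the second by Markov's inequality on the truncated variables, whose expectation is logarithmic by the Cauchy tail bound.

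More precisely, for a threshold $M>0$ to be chosen, define $g_i=\min(f_i,M)$. Write
\[
    \widetilde{P}\!\left(\tfrac{1}{n}\sum_{i=1}^n f_i>K\log n\right)\le \widetilde{P}\!\left(\exists\, i\le n:\, f_i>M\right)+\widetilde{P}\!\left(\tfrac{1}{n}\sum_{i=1}^n g_i>K\log n\right).
\]
By a union bound and the Cauchy tail assumption, the first term is bounded by $n\widetilde{P}(f_1>M)\le Cn/M$. Choosing $M=2Cn/\epsilon$ makes this at most $\epsilon/2$.

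For the second term I would use that, since $g_i$ is a non-negative integer-valued-dominated variable,
\[
    \widetilde{E}[g_1]=\sum_{k=0}^{\infty}\widetilde{P}(g_1>k)=\sum_{k=0}^{M-1}\widetilde{P}(f_1>k)\le 1+\sum_{k=1}^{M-1}\frac{C}{k}\le C'(1+\log M)
\]
for some constant $C'$ depending only on $C$. With $M=2Cn/\epsilon$, this gives $\widetilde{E}[g_1]\le C''(\log n+\log(1/\epsilon))$ for some $C''=C''(C)$. Markov's inequality, together with the fact that all $g_i$ have the same distribution, then yields
\[
    \widetilde{P}\!\left(\tfrac{1}{n}\sum_{i=1}^n g_i>K\log n\right)\le \frac{\widetilde{E}[g_1]}{K\log n}\le \frac{C''(\log n+\log(1/\epsilon))}{K\log n},
\]
which tends to $C''/K$ as $n\to\infty$. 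Picking $K=4C''/\epsilon$ makes this at most $\epsilon/2$ for all sufficiently large $n$, and combining with the union-bound estimate gives the conclusion for some $N=N(\epsilon,C)$.

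The main subtlety is simply that the $f_i$ need not be independent, so one cannot use any concentration beyond Markov; however, Markov only requires identical distribution (in fact only a uniform first-moment bound), so this is not actually an obstacle. The one computational point to be careful about is the tail-sum formula for $\widetilde{E}[g_1]$, which turns the $1/k$ tail into the $\log M$ that must then be balanced against $K\log n$ by choosing $M$ linear in $n$.
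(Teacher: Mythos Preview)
Your proof is correct and follows essentially the same approach as the paper's: truncate the $f_i$ at a threshold growing with $n$, control the probability that truncation changes anything by a union bound using the Cauchy tail, and control the truncated average by Markov's inequality together with the logarithmic bound on $\widetilde{E}[g_1]$ coming from the tail-sum formula. The only cosmetic difference is that the paper truncates at $n^M$ (with $M>2/\epsilon$) rather than at a level linear in $n$; both choices work for the same reason.
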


\ifarxiv

    \begin{proof}
        $f_i$ has a Cauchy tail, so there exists $C_0$ such that $\widetilde{P}(f_i>n)<\frac{C_0}{n}$ for every $n\in\BN$. Let $M>\frac{2}{\ep}$ be a large number and $N\in\BN$ be large enough so that $C_0N^{1-M}<\frac{\ep}{2}$. Fix $n>N$, and let $g_i=\min\{f_i,n^M\}$ for all $1\leq  i\leq n$. Then,
        \[
            \widetilde{P}\left(\frac{1}{n}\sum_{i=1}^{n}{f_i}\neq\frac{1}{n}\sum_{i=1}^{n}{g_i}\right)\leq\sum_{k=1}^{n}{\widetilde{P}(f_k\neq g_k)}=n\cdot \widetilde{P}(f_1\neq g_1).
        \]
        The last term is equal to
        \[
            n\cdot \widetilde{P}(f_1>n^M)<\frac{n\cdot C_0}{n^M}<\frac{\ep}{2}.
        \]
        Now, since $E[g_i]\leq C_0M\cdot\log n$, and $g_i$ is positive, by Markov's inequality, choosing $K=C_0M^2$ we get
        \[
        \widetilde{P}\left(\frac{1}{n}\sum_{i=1}^{n}{g_i}>K\log n\right)<\frac{C_0M\log n}{C_0M^2\log n}=\frac{1}{M}<\frac{\ep}{2},
        \]
        and so
        \[
            \widetilde{P}\left(\frac{1}{n}\sum_{i=1}^{n}{f_i}>K\log n\right)<\ep.
        \]
    \end{proof}

\else
\fi

\begin{lem}[\cite{Be01} Lemma 4.2]
    Let $A_n$ be a sequence of events such that $\tilde{P}(A_n)>1-\epsilon$ for all sufficiently large $n$, and let $\{a_n\}_{n=1}^\infty$ be a sequence such that $\sum_{n=1}^{\infty}{a_n}=\infty.$ Then $\sum_{n=1}^{\infty}{a_n\mathbbm{1}_{A_n} }=\infty$ with probability of at least $1-\ep$. \label{lem2_2dim_recurrent}
\end{lem}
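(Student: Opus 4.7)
The plan is to argue by contradiction via a simple first-moment computation, exploiting the fact that implicitly $a_n\ge 0$ (which is clear from the intended application to the Nash--Williams criterion; the statement can fail for signed $a_n$). Set $T_N(\omega) = \sum_{n\ge N} a_n \mathbbm{1}_{A_n}(\omega)$ and let $B = \{T_1 < \infty\}$; the goal is to show $\tilde{P}(B)\le \epsilon$.

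First I would introduce the ``small-tail'' sub-events $B_N = B \cap \{T_N < 1\}$. Since $a_n\ge 0$, the tails $T_N$ decrease pointwise on $B$ to $0$, so $B_N\uparrow B$ and hence $\tilde{P}(B_N)\to \tilde{P}(B)$. Now suppose for contradiction that $\tilde{P}(B) > \epsilon$; pick $\delta>0$ with $\tilde{P}(B)\ge \epsilon+2\delta$. Then for all $N$ sufficiently large we have $\tilde{P}(B_N)\ge \epsilon+\delta$, and by enlarging $N$ further I can also arrange that $\tilde{P}(A_n^c)<\epsilon$ for every $n\ge N$, using the hypothesis of the lemma.

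The punchline is then to bound $\tilde{E}[\mathbbm{1}_{B_N}\, T_N]$ in two ways. On the one hand, since $T_N<1$ on $B_N$, this expectation is at most $\tilde{P}(B_N)\le 1$. On the other hand, by linearity and the elementary inequality $\tilde{P}(B_N\cap A_n) \ge \tilde{P}(B_N) - \tilde{P}(A_n^c) \ge (\epsilon+\delta)-\epsilon = \delta$ valid for all $n\ge N$, I obtain
\[
\tilde{E}[\mathbbm{1}_{B_N}\, T_N] \;=\; \sum_{n\ge N} a_n\, \tilde{P}(B_N\cap A_n) \;\ge\; \delta \sum_{n\ge N} a_n \;=\; \infty,
\]
which contradicts the upper bound $1$. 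Hence $\tilde{P}(B)\le \epsilon$, and the conclusion follows.

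I do not foresee a significant obstacle. The only delicate point is the ordering of the quantifiers --- $N$ must be chosen simultaneously large enough for both $\tilde{P}(B_N)\ge \epsilon+\delta$ and $\tilde{P}(A_n^c)<\epsilon$ for $n\ge N$ to hold --- and the tacit assumption $a_n\ge 0$ should be flagged explicitly. Everything else is one page of bookkeeping with monotone convergence and linearity of expectation.
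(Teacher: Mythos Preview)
Your proof is correct and follows essentially the same first-moment contradiction argument as the paper: both assume the ``bad'' event $\{\sum a_n\ind_{A_n}<\infty\}$ has probability exceeding $\ep$, use $\tilde P(B\cap A_n)\ge \tilde P(B)-\tilde P(A_n^c)\ge\delta$ to get a uniform lower bound, and then derive $\tilde E[\ind_B\cdot\text{tail sum}]=\infty$ in contradiction with the finiteness of the tail on $B$. The only cosmetic difference is that the paper fixes $N$ and works with $B_M=\{\sum_{n\ge N}a_n\ind_{A_n}<M\}$ for generic $M$, whereas you let $N$ vary and use the fixed threshold $1$ via $B_N=B\cap\{T_N<1\}$; your explicit flagging of the tacit hypothesis $a_n\ge 0$ is a good addition.
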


\ifarxiv

    \begin{proof}
    It is enough to show that there exists $N$ such that for any $M$,
    \begin{equation}
        P\left(\sum_{n=N}^{\infty}{\mathbbm{1}_{A_n}\cdot a_n}<M\right)\leq\ep. \label{lem_claim}
    \end{equation}
    Define $N$ such that for every $n>N$ we have $P(A_n)>1-\epsilon$, and assume that for some $M$ (\ref{lem_claim}) is false. Define $B_M$ to be the event
    $B_M=\left\{\sum_{n=N}^{\infty}{\mathbbm{1}_{A_n}\cdot a_n}<M\right\}$. Since $P(B_M)>\ep$, we know that there exist $\delta>0$ such that for every $n>N$
    \[
        P(A_n|B_M)=\frac{P(A_n\cap B_M)}{P(B_M)}\geq\frac{P(B_M)-\ep}{P(B_M)}>\delta>0.
    \]
    Therefore,
    \[
        E\left[\sum_{n=N}^{\infty}{\mathbbm{1}_{A_n}\cdot a_n}\Bigg|B_M\right]\geq\delta\sum_{n=N}^{\infty}{a_n}=\infty,
    \]
    which contradicts the definition of $B_M$.
    \end{proof}

\else
\fi

We also need the following definition:

\begin{defn}
    Assume $G=(V,E)$ is a graph such that $V\subset\BZ^2$ and $E$ is a set of edges, each of them is parallel to some axis, but may connect non nearest neighbors in $\BZ^2$. For an edge $e\in E$ we denote by  $e^+,e^-\in V$ the end points of $e\in E$. In order for this to be well defined we assume that if $(e^+-e^-)\cdot e_i\neq 0$ then $(e^+-e^-)\cdot e_i>0$. Note that by the assumption on the edges in $e$ the value of $e^+-e^-$ is non zero in exactly one coordinate.
\end{defn}

\begin{proof}[Proof of Theorem \ref{tran_recu2}]
    The idea of the proof is to construct for every $\om\in\Om$ an electrical network which satisfy the Nash-Williams criterion and induce the same law on the random walk as the law of the random walk on $\om$, $P$-a.s. Since $P$ is a marginal of $Q$ it is enough to construct a network which satisfy the criterion for $Q$ almost every $\om\in\Om$. For every $\om\in\Om$, we define the corresponding network with conductances $G(\om)=(V(\om),E(\om),c(\om))$ via the following three steps (See figure \ref{fig:network_construction} for an illusration): Step 1. Define $G_1(\om)=(V_1(\om),E_1(\om),c_1(\om))$ to be the network induced from $\om$ with all conductances equal to $1$. More precisely we define
    \begin{equation}
        \begin{array}{lll}
            V_1(\om)&=&\CP(\om)\\
            E_1(\om)&=&\left\{\{x,y\}\in V_1\times
            V_1:y\in\{x\pm f_{e_1}(\om)e_1,x\pm f_{e_2}(\om)e_2\}\right\}\\
            c_1(\om)(e)&=&1,\quad\forall e\in E_1
        \end{array}.
        \nonumber
    \end{equation}
    Note that the continuous time random walk induced by the network $G_1(\om)$ (cf. \cite{DS84,LP97}) is indeed the random walk introduced in \eqref{transition_probability} when $0\in\CP(\om)$.

    Step 2. Define $G_2(\om)$ to be the network generated from $G_1(\om)$ by "cutting" every edge of length $k$ into $k$ edges of length $1$, giving conductance $k$ to each part. A small technical problem with "cutting" the edges is that vertical and horizontal edges may cross each other in a point that doesn't  belong to $\CP(\om)$. In order to avoid this we give the following formal definition which is a bit cumbersome:
    \begin{equation}
        \begin{array}{lll}
            V_2(\om)&=&V_2^1(\om)\biguplus
            V_2^2(\om)\subset\BZ^2\times\{0,1\}\\
            E_2(\om)&=&E_2^1(\om)\biguplus E_2^2(\om)\\
        \end{array},
        \nonumber
    \end{equation}
    where
    \begin{equation}
        V_2^i(\om)=\left\{(x,i):
        \begin{array}{cc}
            \exists~e\in E_1(\om)~,~\exists~0\leq k\leq |e^+-e^-|_1\\
            \text{such that } (e^+-e^-)\cdot e_i\neq 0~,~x=e^-+ke_i
        \end{array}
        \right\}
        \nonumber
    \end{equation}
    and
    \begin{equation}
        E^i(\om)=\left\{\left\{(v,i),(w,i)\right\}:
        \begin{array}{cc}
            \exists~e\in E_1(\om)~,~\exists~0\leq k< |e^+-e^-|_1 \text{ such that }\\
            ~(e^+-e^-)\cdot e_i\neq 0~,~v=e^-+ke_i~,~w=e^-+(k+1)e_i
        \end{array}
        \right\}.
        \nonumber
    \end{equation}
    We also define the conductance $c'(\om)(e)$ of an edge $e\in E'(\om)$ to be $k$, given that the length (i.e. $|e^+-e^-|_1$) of the original edge it was part of was $k$. Step 3. Define $G(\om)$ to be the graph obtained from $G_2(\om)$ by identifying two vertices if they are of the form $(v,1)$ and $(v,2)$ for some $v\in\CP(\om)$.
    \begin{figure}
        \epsfig{figure=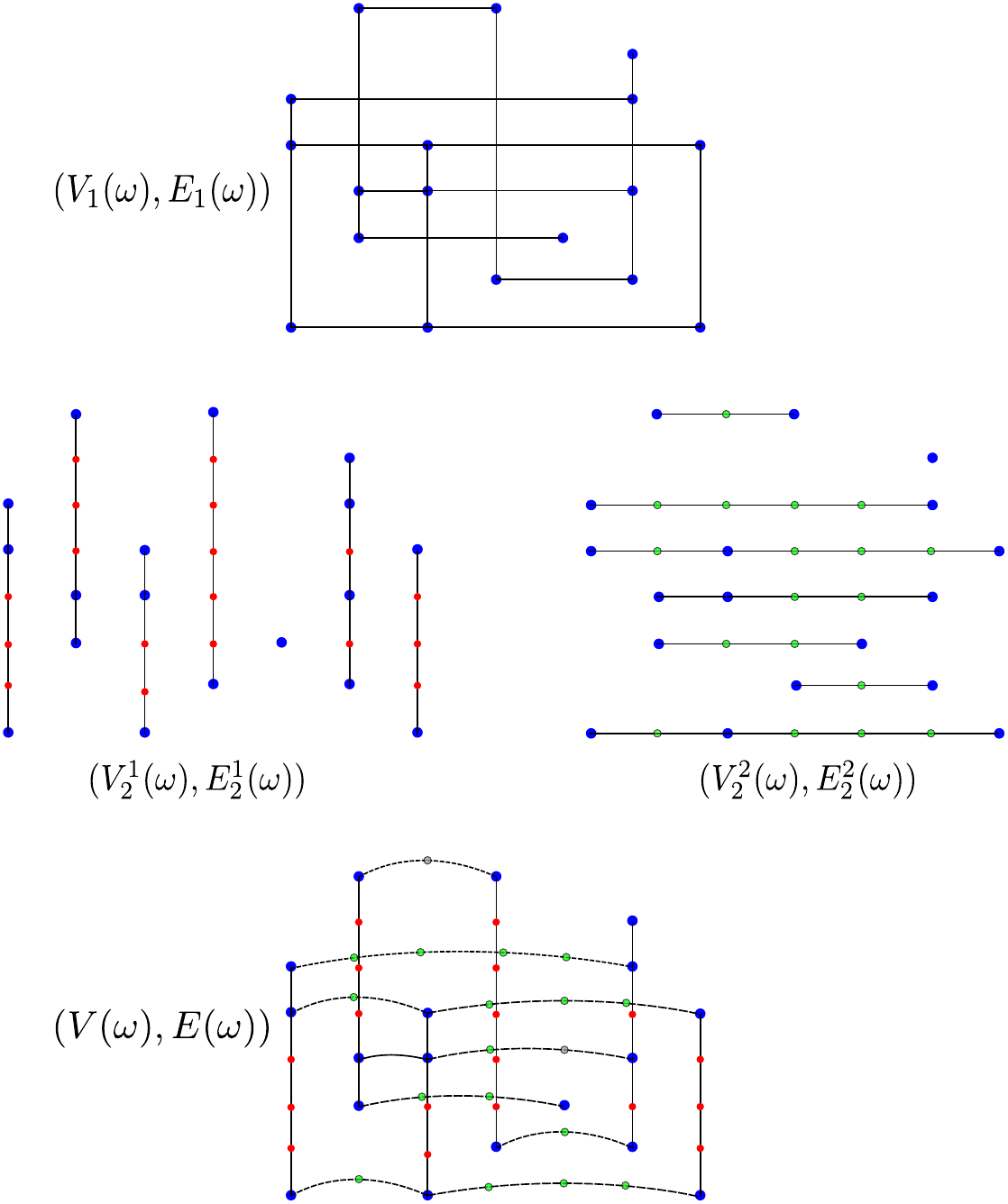, width=0.6\textwidth, clip}
        \smallskip
        \caption{Construction of the network in two dimensions}
        \label{fig:network_construction}
    \end{figure}
    Note that by a standard analysis of conductances, see e.g.~\cite{DS84}, it is clear that the random walk on the new network is transient if and only if the original random walk is transient. Thus we turn to prove the recurrence of the random walk on the new graph. This is done using the Nash-Williams Criterion. Let $\Pi_n$ be the set of edges exiting the box $[-n,n]^2\times \{1,2\}$ in the graph $G(\om)$. The sets $\Pi_n$ define a sequence of pairwise disjoint cutsets in the network $G(\om)$, i.e., a set of edges that any infinite simple path starting at the origin must cross. Next we wish to estimate the conductances in the network $G$. Fix some $e\in E$ such that $(e^+-e^-)\cdot e_i\neq 0$ and note that the distribution of $c(e)$ is the same for all edges in direction $e_i$. For $\om\in\Om$ we denote by $\text{len}_{e_i}(\om)$ the length of the interval containing the origin in direction $e_i$, where in the case that the origin belongs to the point process we define $\text{len}_{e_i}(\om)$ to be the length of the interval starting at the origin in direction $e_i$. More precisely we define $\text{len}_{e_i}(\om)=f_{e_i}(\om)+g_{e_i}(\om)$, where $g_{e_i}(\om)=\min\{n\leq 0~:~\omega(ne_i)=1\}$. In addition for $n\in\BN$ we define $l_n(\om)=l_n^{e_i}(\om)$ to be the length of the first $n^{th}$ intervals starting at the origin in direction $e_i$, i.e., $l_n(\omega)=g_{e_i}(\om)+\sum_{j=0}^{n-1}f_{e_i}\left(\sigma_{e_i}^j-1(\omega)\right)$. Using the definition of $\text{len}_{e_i}$ we have the following estimate
    \begin{equation}
        Q(c(e)=k)=Q\left(
        \begin{array}{cc}
            \text{the original edge that}\\
            \text{contained } e \text{ in } G_1(\om)\\
            \text{is of length } k
        \end{array}
        \right)=Q(\text{len}_{e_i}(\om)=k).
        \nonumber
    \end{equation}
    By Birkhoff's Ergodic Theorem the last term $Q$ almost surely equals
    \[
        \lim_{n\to\infty}\frac{1}{n}\sum_{j=0}^{n-1}\ind_{\{\text{len}_{e_i}(\theta^j \om)=k\}}.
    \]
    Since $l_n$ tends to infinity $Q$ almost surely and $g_{e_i}(\om)$ is finite $Q$ almost surely this implies
    \[
        Q(c(e)=k)=\lim_{n\to\infty}\frac{1}{l_n(\om)}\sum_{j=0}^{l_n(\om)}\ind_{\{\text{len}_{e_i}(\theta^j\om)=k\}}=\lim_{n\to\infty}\frac{n}{l_n(\om)}\cdot \frac{1}{n}\sum_{\footnotesize{j=-g_{e_i}(\om)}}^{\footnotesize{l_n(\om)-g_{e_i}(\om)}}\ind_{\{\text{len}_{e_i}(\theta^j\om)=k\}},
    \]
    which after rearrangement can be written as
    \[
        \lim_{n\to\infty}\frac{n}{l_n}\cdot \frac{1}{n}\sum_{j=0}^{n-1}k\cdot\ind_{\{f_{e_i}(\sigma_{e_i}^j(\om))=k\}}.
    \]
    Recalling that by Birkhoff's Ergodic Theorem (applied to the induced shift) we also have $P$ almost surely
    \[
        \lim_{n\to\infty}\frac{l_n}{n}=\BE_P[f_{e_i}]\quad ,\quad \lim_{n\to\infty}\frac{1}{n}\sum_{j=0}^{n-1}\ind_{\{f_{e_i}(\sigma_{e_i}^j(\om))=k\}}=P(f_{e_i}=k),
    \]
    we get
    \begin{equation}
        Q(c(e)=k)=\frac{k\cdot P(f_{e_i}=k)}{\BE_P[f_{e_i}]}.
        \label{eq:c(e)_dist}
    \end{equation}

    From \eqref{eq:c(e)_dist} and the assumption of Theorem \ref{tran_recu2}, i.e.~\eqref{Cauchy_tail_ass}, it follows that $c(e)$ has a Cauchy tail. Note that $\Pi_n$ contains $4n+2$ edges at each level, i.e., in each of the sets $\{E^i(\om)\}_{i=1,2}$,  all of them with the same distribution (by \eqref{Cauchy_tail_ass} with a Cauchy tail), though they may be dependent. By Lemma \ref{Cauchy_tail_lemma}, for every $\ep>0$ there exist $K>0$ and $N\in\BN$ such that for every $n>N$, we have
    \begin{equation}
        Q\left(\sum_{e\in\Pi_n}{c(e)}\leq K(8n+4)\log (8n+4)\right)>1-\ep. \label{Cauchy_event}
    \end{equation}

    Define $A_n$ to be the event in equation (\ref{Cauchy_event}), and $a_n=(K(8n+4)\log (8n+4))^{-1}$. Notice that  $C_{\Pi_n}\overset{_\text{def}}{=}\sum_{e\in\Pi_n}c(e)$ satisfies $\sum_{n=1}^{\infty}{{C_{\Pi_n}}^{-1}}\geq \sum_{n=N}^{\infty}{\mathbbm{1}_{A_n}\cdot a_n}.$ In addition the definition of $\{a_n\}$ implies that $\sum_{n=N}^{\infty}{a_n}=\infty$. Combining the last two facts together with \eqref{Cauchy_event} and Lemma \ref{lem2_2dim_recurrent} gives $Q\left(\sum_{n=1}^{\infty}{{C_{\Pi_n}}^{-1}}=\infty\right)\geq 1-\ep$. Since $\ep$ is arbitrary, we get that $\sum_{n=1}^{\infty}{{C_{\Pi_n}}^{-1}}=\infty$, $Q$ a.s.~and therefore in particular $P$ a.s. Thus by the Nash-Williams criterion, the random walk is $\BP$ almost surely recurrent.
\end{proof}


\subsection{Higher dimensions ($d\geq 3$)}

$~$\\

Here we prove the transience of random walks on discrete point processes in dimension $3$ or higher. The idea of the proof is to bound the heat kernel so that the Green function of the random walk will be finite. This is done by first proving an appropriate discrete isoperimetric inequality for finite subsets of  $\BZ^d$, and then using well known connections between isoperimetric inequalities to heat kernel bounds (see \cite{MP08}) to bound the heat kernel. In order to state the isoperimetric inequality we need the following definition:

\begin{defn}
    Let $x=(x_1,x_2,\ldots,x_d)$ be a point in $\BZ^d$. For $1\leq j\leq d$ denote by $\Pi^j:\BZ^d\rightarrow\BZ^{d-1}$ the projection on all but the $j^{th}$ coordinate, namely
    \[
        \Pi^j(x)=\Pi^j((x_1,x_2,\ldots,x_d))=(x_1,x_2,\ldots,x_{j-1},x_{j+1},\ldots,x_d).
    \]
\end{defn}

\begin{lem}
    There exists $C=C(d)>0$ such that for every finite subset $A$ of $\BZ^d$
    \begin{equation}
        \max_{1\leq j\leq d}\{|\Pi_j(A)|\}\geq C\cdot |A|^{\frac{d-1}{d}},
    \end{equation}
    where $|\cdot|$ denotes the cardinality of the set. \label{lem_volume_surface}
\end{lem}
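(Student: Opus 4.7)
The plan is to prove the stronger discrete Loomis--Whitney inequality
\[
    |A|^{d-1} \leq \prod_{j=1}^{d} |\Pi^j(A)|,
\]
from which the lemma follows with constant $C=1$: indeed, bounding each factor on the right by the maximum yields $|A|^{d-1} \leq \bigl(\max_j |\Pi^j(A)|\bigr)^d$, and taking $d$-th roots gives the claim.

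To establish the Loomis--Whitney inequality I would proceed by induction on $d$. The base case $d=2$ is immediate: any $A\subset\BZ^2$ is contained in the rectangle $\Pi^2(A)\times\Pi^1(A)$, hence $|A|\leq |\Pi^1(A)|\cdot |\Pi^2(A)|$. For the induction step, slice $A$ along the last coordinate: set $A_t=\{(x_1,\ldots,x_{d-1}):(x_1,\ldots,x_{d-1},t)\in A\}\subset\BZ^{d-1}$, so that $|A|=\sum_t |A_t|$. Denoting by $\tilde\Pi^j$ the analogue of $\Pi^j$ inside $\BZ^{d-1}$, two observations connect these slices to the projections of $A$: first, $A_t\subset\Pi^d(A)$ as subsets of $\BZ^{d-1}$, so $|A_t|\leq |\Pi^d(A)|$; second, for $j=1,\ldots,d-1$ we have $|\Pi^j(A)|=\sum_t |\tilde\Pi^j(A_t)|$, since distinct values of $t$ contribute disjoint slices to $\Pi^j(A)$.

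Applying the induction hypothesis to each $A_t$ gives $|A_t|^{d-2}\leq \prod_{j=1}^{d-1}|\tilde\Pi^j(A_t)|$; multiplying by the bound $|A_t|\leq |\Pi^d(A)|$ and taking $(d-1)$-th roots yields
\[
    |A_t| \leq |\Pi^d(A)|^{1/(d-1)}\cdot \prod_{j=1}^{d-1}|\tilde\Pi^j(A_t)|^{1/(d-1)}.
\]
Summing over $t$ and applying Hölder's inequality with $d-1$ factors of exponent $d-1$ to the sum on the right gives
\[
    |A| \leq |\Pi^d(A)|^{1/(d-1)}\cdot \prod_{j=1}^{d-1}\Bigl(\sum_t |\tilde\Pi^j(A_t)|\Bigr)^{1/(d-1)} = \prod_{j=1}^{d}|\Pi^j(A)|^{1/(d-1)},
\]
which is exactly the Loomis--Whitney bound. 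The only real content is the Hölder step in the induction; once that is in place, the lemma is a one-line corollary, so I do not expect serious obstacles beyond being careful about which coordinate is projected out when identifying slices with subsets of $\BZ^{d-1}$.
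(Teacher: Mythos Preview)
Your argument is correct: this is the standard inductive proof of the discrete Loomis--Whitney inequality, and once that inequality is in hand the lemma follows immediately with the sharp constant $C=1$.

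The paper takes a genuinely different route. Instead of Loomis--Whitney, it introduces ``squeezing operators'' $\CS_j$ that, for each coordinate direction, push every fiber of $A$ down against the coordinate hyperplane; it checks that each $\CS_j$ preserves $|A|$, does not increase any $|\Pi^i(A)|$, and strictly decreases a suitable energy functional unless it already fixes $A$. Iterating the $\CS_j$ in a round-robin fashion produces a compressed set $\widetilde{A}$ with the same cardinality, no larger projections, and the property $\CS_j(\widetilde{A})=\widetilde{A}$ for all $j$; for such a down-set the vertex boundary in $\BZ^d$ equals $2\sum_i |\Pi^i(\widetilde{A})|$. The paper then invokes the classical discrete isoperimetric bound $|\partial \widetilde{A}|\geq C_0|\widetilde{A}|^{(d-1)/d}$ (citing \cite{DP96}) to conclude that some projection of $\widetilde{A}$, and hence of $A$, is at least $C|A|^{(d-1)/d}$.

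Your approach is shorter, self-contained, and gives the optimal constant; the paper's compression argument is more geometric and ties the statement explicitly to the isoperimetric inequality that motivates the section, but at the cost of importing that inequality as a black box and losing the sharp constant.
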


Before turning to the proof we fix some notations.

\begin{defn}$~$\\
    \begin{itemize}
        \item Denote by $\CQ^d$ the quadrant of points in $\BZ^d$ all of whose entries are positive.
        \item For a point $x\in\CQ^d$ define its energy by $\mathcal{E}(x)=\sum_{j=1}^d x_j$.
        \item For a finite set $A\subset \CQ^d$ denote $\mathcal{E}(A)=\sum_{x\in A}{\mathcal{E}(x)}$.
        \item Given a finite set $A\subset \CQ^d$, $1\leq j\leq d$ and some point $y=(y_1,y_2,\ldots,y_{d-1})\in\CQ^{d-1}$ we define the $y$-fiber of $A$ in direction $j$
            \[
                A_{j,y}\overset{def}{=}\{x_j:(y_1,y_2,\ldots,y_{j-1},x_j,y_{j},\ldots,y_{d-1})\in A\}.
            \]
    \end{itemize}
\end{defn}

\begin{proof}[Proof of Lemma \ref{lem_volume_surface}]
    Assume $|A|=n$. Using translations, we can assume without loss of generality that $A\subset \CQ^d$. Next, for $1\leq j\leq d$ we define $\CS_j:2^{\CQ^d}\rightarrow 2^{\CQ^d}$ the "squeezing operator in direction $j$". The definition of $\CS_j$  is a bit complicated, however the idea is to mimic the operation of pushing the points inside each of the fibers of $A$ in direction $j$ as close to the hyperplane $x_j=0$ as possible without any of them leaving the quadrant $\CQ^d$. An illustration of $\CS_j$ operation is illustrated in Figure \ref{fig:fig3}. More formally $\CS_j$ is defined by
    \[
        \CS_j(A)=\bigcup_{y=(y_1,\ldots,y_{d-1})\in\CQ^{d-1}}\Big\{(y_1,y_2,\ldots,y_{j-1},m,y_{j},\ldots,y_{d-1})\Big\}_{m=1}^{|A_{j,y}|}.
    \]

    \begin{figure}
        \epsfig{figure=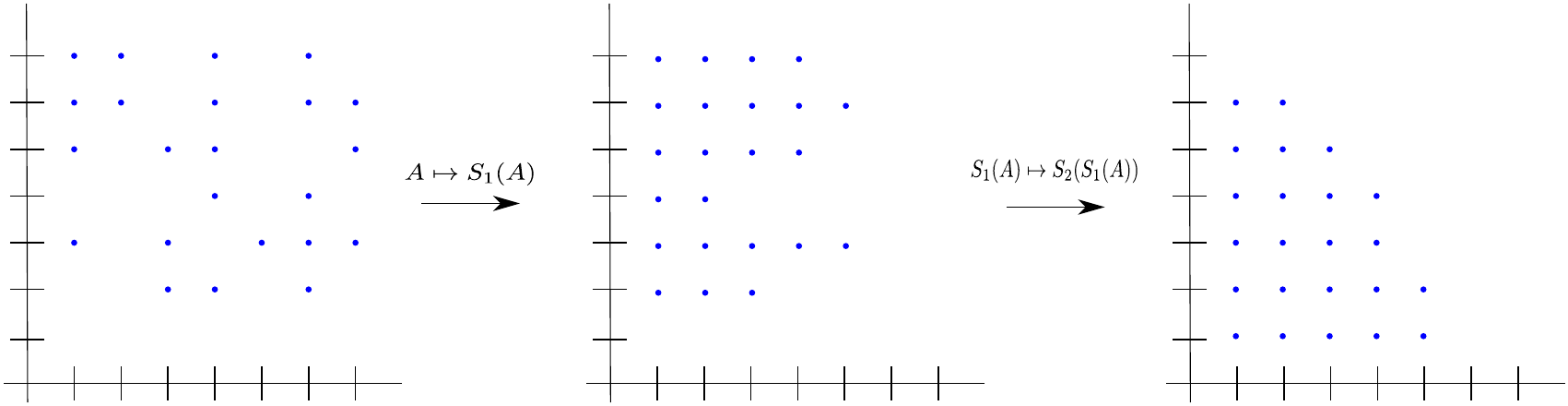, width=1\textwidth, clip}
        \smallskip
        \caption{The operator $S_j(A)$.}
        \label{fig:fig3}
    \end{figure}

    The operator $\CS_j$ satisfies the following properties:
    \begin{enumerate}

        \item The size of each fiber of $\CS_j(A)$ in direction $j$ is the same as the corresponding one for $A$.

        \item The size of $\CS_j(A)$ is the same as the size of $A$.

        \item $|\Pi^i(\CS_j(A))|\leq |\Pi^i(A)|$ for every $1\leq i,j \leq d$.

        \item $\mathcal{E}(\CS_j(A))\leq\mathcal{E}(A)$, and equality holds if and only if $\CS_j(A)=A$.

    \end{enumerate}

    Indeed,
    \begin{enumerate}
        \item This follows directly from the definition of $S^j$. Given $y=(y_1,\ldots,y_{d-1})\in\CQ^{d-1}$
        \begin{equation}
            \left|\CS_j(A)_{j,y}\right|=\left|\Big\{(y_1,y_2,\ldots,y_{j-1},m,y_{j},\ldots,y_{d-1})\Big\}_{m=1}^{|A_{j,y}|}\right|=\left|A_{j,y}\right|.
            \nonumber
        \end{equation}

        \item Since the fibers in direction $j$ of a set form a partition we get
        \[
            \left|\CS_j(A)\right|=\sum_{y\in\CQ^{d-1}}\left|\CQ_j(A)_{j,y}\right|=\sum_{y\in\CQ^{d-1}}\left|A_{j,y}\right|=|A|.
        \]

        \item For $i=j$ note that $y=(y_1,\ldots,y_{d-1})\in \Pi^j(A)$ if and only if there exists some $m\in\BN$ such that $(y_1,\ldots,y_{j-1},m,y_j,\ldots,y_{d-1})\in A$. This however is equivalent to the fact that $(y_1,\ldots,y_{j-1},1,y_j,\ldots,y_{d-1})\in \CS_j(A)$ which again is true if and only if $y=(y_1,\ldots,y_{d-1})\in \Pi^j(\CS_j(A))$. Thus $\Pi^j(A)=\Pi^j(\CS_j(A))$. Turning to the case $i\neq j$, the proof follows from the fact that we can reduce the problem into two dimensions. Without loss of generality assume that $i=1$ and $j=2$, then
            \footnotesize{
            \begin{equation}
            \begin{aligned}
                \left|\Pi^i(S_j(A))\right|=\left|\Pi^1(S_2(A))\right|&=\sum\limits_{(y_2,\ldots,y_d)\in\CQ^{d-1}}\ind_{\{\exists m\geq 1 \text{ s.t. } (m,y_2,\ldots,y_{d})\in S_2(A)\}}\\
                &=\sum\limits_{(y_3,\ldots,y_d)\in\CQ^{d-2}}\sum\limits_{y_2\in\CQ}\ind_{\{\exists m\geq 1 \text{ s.t. } (m,y_2,y_3,\ldots,y_{d})\in S_2(A)\}}\\
                &=\sum\limits_{(y_3,\ldots,y_d)\in\CQ^{d-2}}\max\limits_{y_2\in\CQ}\{|S_2(A)_{2,(y_2,\ldots,y_d)}|\}\\
                &=\sum\limits_{(y_3,\ldots,y_d)\in\CQ^{d-2}}\max\limits_{y_2\in\CQ}\{|A_{2,(y_2,\ldots,y_d)}|\}\\
                &\leq\sum\limits_{(y_3,\ldots,y_d)\in\CQ^{d-2}}\sum\limits_{y_2\in\CQ}\ind_{\{\exists m\geq 1 \text{ s.t. } (m,y_2,y_3,\ldots,y_{d})\in A\}}\\
                &=\left|\Pi^1(A)\right|=\left|\Pi^i(A)\right|.
            \end{aligned}
            \nonumber
            \end{equation}
        }\normalsize{}
        where the third equality follows from the definition of $S_2$ (see figure \ref{fig:fig3}).

        \item As before this follows from the fact that we can reduce the problem into two dimensions. By the definition of energy (and some abuse of notation)
        \footnotesize{
        \begin{equation}
        \begin{aligned}
            \CE(\CS_j(A))&=\sum\limits_{\overset{y\in \CQ^{d-1}}{y=(y_1,\ldots,y_{d-1})}}\sum\limits_{x\in \CS_j(A)_{j,y}}\CE((y_1,\ldots,y_{j-1},x,y_j,\ldots,y_{d-1}))\\
            &=\sum\limits_{y\in \CQ^{d-1}}\sum\limits_{x\in \CS_j(A)_{j,y}}(\CE(y)+x)\\
                &=\sum\limits_{y\in \CQ^{d-1}}(|\CS_j(A)_{j,y}|\CE(y)+\CE(\CS_j(A)_{j,y}))\\
            &\leq\sum\limits_{y\in \CQ^{d-1}}(|A_{j,y}|\CE(y)+\CE(A_{j,y}))\\
            &=\CE(A),
            \nonumber
        \end{aligned}
        \end{equation}
        }\normalsize{}
        where the inequality follows from the fact that any fiber of $\CS_j(A)$ in direction $j$ has the minimal energy when compared to any other fiber in the quadrant $\CQ^d$ in direction $j$ with the same number of point as $\CS_j(A)$. In particular this holds when comparing fibers of $\CS_j(A)$ and $A$ in direction $j$. Note that equality holds if and only if all the fibers of $A$ in direction $j$ are exactly the ones of $\CS_j(A)$ which implies $A=\CS_j(A)$.
    \end{enumerate}

    Let $\{a_m\}$ be the periodic sequence $1,2,\ldots,d,1,2,\ldots,d,1,2,\ldots,d,\ldots$ and define the sequence of sets $\{A_m\}$ by the recursion formula $A_0=A$ and  $A_{m+1}=\CS_{a_m}(A_m)$ for $m\geq 0$. Property (4) of the operators $\CS_j$ implies that $\CE(A_m)$ is a decreasing sequence of positive integers. Consequently, up to finite number of elements the sequence $\CE(A_m)$ is constant. Recalling once more property (4) of $\CS_j$ we get that up to finite number of sets $A_m$ is constant. Denote the constant set of the sequence by $\widetilde{A}$. The definition of the sequence $A_m$ and property (3) of $S_j(A)$ implies that
    \begin{enumerate}
        \item $\CS_j(\widetilde{A})=\widetilde{A}$ for every $1\leq j\leq d$.

        \item $|\Pi^j(\widetilde{A})|\leq |\Pi^j(A)|$ for every $1\leq j\leq d$.

        \item $|\widetilde{A}|=|A|$.
    \end{enumerate}
    The first property implies that the size of the boundary of $\widetilde{A}$ is exactly $2\sum_{i=1}^{d}|\Pi^i(\widetilde{A})|$ (see figure \ref{fig:fig3}). Using the fact that the boundary of every set of size $n$ in $\BZ^d$ is at least $C_0\cdot n^{\frac{d-1}{d}}$ for some positive constant $C_0=C_0(d)$ (see \cite{DP96}), we get that there exists a positive constant $C=C(d)$ and at least one $i_0\in\{1,2,\ldots,d\}$ such that $|\Pi^{i_0}\big(\widetilde{A}\big)|\geq C\cdot |\widetilde{A}|^{\frac{d-1}{d}}=C\cdot|A|^{\frac{d-1}{d}}$. Thus by recalling property (2) of $\widetilde{A}$, the statement holds.
\end{proof}

\vspace{4 mm}

We now turn to define the isoperimetric profile of a graph. Let $\{p(x,y)\}_{x,y\in V}$ be symmetric transition probabilities for an irreducible Markov chain on a countable state space V. We think about this Markov chain as a random walk on a weighted graph $G=(V,E,C)$, with $\{x,y\}\in E$ if and only if $p(x,y)>0$. For every $\{x,y\}\in E$ define the conductance of $(x,y)$ by  $C(x,y)=p(x,y)$. For $S\subset V$, the "boundary size" of $S$ is measured by  $|\partial S|=\sum_{s\in S}{\sum_{s'\in S^c}{p(s,s')}}$. We define $\Phi_S$, the conductance of S, by $\Phi_S:=\frac{|\partial S|}{|S|}$. Finally, define  the isoperimetric profile of the graph G,  with vertices V and conductances induced from the transition probabilities by:
\begin{equation}
    \Phi(u)=\inf\{\Phi_S:S\subset V,~|S|\leq u\}.
\end{equation}

\begin{thm}[\cite{MP08} Theorem 2]
    Let $G=(V,E)$ be a graph with countably many vertices and bounded degree. Assume there exists $0<\gamma\leq\frac{1}{2}$ such that $p(x,x)\geq\gamma$ for every $x\in V$. If
    \begin{equation}
    n\geq 1+\frac{(1-\gamma)^2}{\gamma^2}\int_{4}^{4/\ep}{\frac{4du}{u\Phi^2(u)}},\label{integral_condition}
    \end{equation}
    then
    \begin{equation}
    |p^n(x,y)|\leq\ep,
    \end{equation}
    where $p^n(x,y)$ is the probability for the Markov chain starting at $x$ to hit $y$ after $n$ steps. \label{MP_theorem}
\end{thm}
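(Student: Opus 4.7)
The result is Theorem 2 of \cite{MP08} and is quoted here verbatim; I will therefore only sketch the structure of its proof, which follows the \emph{evolving sets} method of Morris and Peres. The strategy is to bound the heat kernel $p^n(x,y)$ in terms of a random set-valued process whose shrinkage rate is controlled by the isoperimetric profile $\Phi$.

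First, I would introduce the evolving set chain $(S_n)_{n\geq 0}$ on subsets of $V$, started at $S_0=\{x\}$. Writing $Q(S,y)=\sum_{z\in S}\pi(z)p(z,y)$ where $\pi$ is the (counting) stationary measure, one obtains $S_{n+1}$ from $S_n$ by drawing $U$ uniformly on $[0,1]$ and setting $S_{n+1}=\{y\in V : Q(S_n,y)\geq U\pi(y)\}$. A short computation using the symmetry of $p$ yields the fundamental identity
\[
    \pi(y)\,p^n(x,y)=\pi(x)\,\mathbf{E}^{\{x\}}\!\left[\mathbf{1}_{\{y\in S_n\}}\right],
\]
so bounding $p^n(x,y)$ reduces to proving that the evolving set rarely contains $y$ at time $n$.

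The key estimate is the drift inequality: for every finite $S\subset V$,
\[
    \mathbf{E}\!\left[\sqrt{\pi(S_{n+1})}\,\Big|\,S_n=S\right] \leq \left(1 - c(\gamma)\,\Phi(\pi(S))^2\right)\sqrt{\pi(S)},
\]
with $c(\gamma)$ a positive constant of order $\gamma^2/(1-\gamma)^2$ arising from the laziness hypothesis $p(x,x)\geq\gamma$. Iterating this inequality and comparing the resulting telescoping product with the integral $\int du/(u\Phi^2(u))$ appearing in \eqref{integral_condition} converts the hypothesis on $n$ into $\mathbf{P}^{\{x\}}(y\in S_n)\leq\ep$; combined with the fundamental identity this gives $p^n(x,y)\leq\ep$.

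The main obstacle, and the reason the theorem requires a genuinely non-trivial argument, is precisely the drift inequality with the $\sqrt{\pi(\cdot)}$ Lyapunov functional: a naive $L^1$ Cheeger estimate only yields a factor $(1-c\Phi)$ in the bracket, which integrates to the much weaker $\Phi^{-1}\log|V|$ type mixing-time bound and cannot reproduce the squared profile needed in \eqref{integral_condition}. The contribution of \cite{MP08} is that the evolving-set identity combined with this $L^2$-like choice of functional produces the $\Phi^2$ in the exponent, and the laziness assumption $p(x,x)\geq\gamma$ is what prevents oscillations that would otherwise spoil the monotonicity of $\sqrt{\pi(S_n)}$.
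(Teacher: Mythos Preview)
The paper does not prove this theorem; it is quoted verbatim from \cite{MP08} and used as a black box to derive the heat-kernel bound in Proposition~\ref{bound_of_transitions}. You correctly recognize this, and your sketch of the evolving-sets argument (fundamental identity, $\sqrt{\pi(\cdot)}$ drift inequality, and the conversion of the telescoping product into the integral condition) is an accurate outline of the Morris--Peres proof, so nothing further is required here.
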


Combining Lemma \ref{lem_volume_surface} and Theorem \ref{MP_theorem} we get the following bound on the heat kernel:

\begin{prop}
    Let $p_\om^n(x,y)$ be the probability that the random walk in the environment $\om$ moves from $x$ to $y$ in $n$ steps. Then there exists a positive constant $K$ depending only on $d$, such that for every $n\in\BN$ and every $x,y\in\CP(\om)$
    \begin{equation}
        p_\om^{n}(x,y)\leq\frac{K}{n^{d/2}},\quad P~a.s.
    \end{equation}
    \label{bound_of_transitions}
\end{prop}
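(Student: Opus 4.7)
The strategy is to apply Theorem \ref{MP_theorem} (Morris--Peres) to the walk $p_\om$, for which I need a lower bound on the isoperimetric profile $\Phi(u)$ of the induced graph on $\CP(\om)$. Lemma \ref{lem_volume_surface} is the combinatorial input, and a simple ``fiber-endpoint'' argument converts it into the required boundary estimate. Before starting, I observe that the walk is reversible with uniform counting measure as stationary measure: if $u$ is the $+e_j$-coordinate nearest neighbor of $v$ then no point of $\CP(\om)$ lies strictly between $v$ and $u$ on the line $\{v+te_j:t\in\BZ\}$, so $v$ is the $-e_j$-coordinate nearest neighbor of $u$; hence $p_\om(u,v)=p_\om(v,u)=1/(2d)$ for every coordinate-neighbor pair.

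\textbf{Isoperimetric bound.} Fix a finite $S\subset\CP(\om)$ and a direction $j\in\{1,\dots,d\}$. For each $y\in\Pi^j(S)$, the fiber $S_{j,y}$ is non-empty; its minimum attains its $-e_j$-coordinate nearest neighbor at a point of $\CP(\om)$ with $j$-th coordinate strictly below $\min S_{j,y}$, which therefore lies outside $S$, and analogously for the maximum. Thus each non-empty fiber in direction $j$ contributes at least two edges to the edge-boundary $\partial_E S$, giving $|\partial_E S|\geq 2\sum_{j=1}^d |\Pi^j(S)|\geq 2\max_j|\Pi^j(S)|\geq 2C|S|^{(d-1)/d}$ by Lemma \ref{lem_volume_surface}. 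Each boundary edge carries conductance $1/(2d)$, so
\[
|\partial S|\;\geq\;\tfrac{1}{d}\max_{j}|\Pi^j(S)|\;\geq\;\tfrac{C}{d}\,|S|^{(d-1)/d},\qquad \Phi_S\;\geq\;\tfrac{C}{d}\,|S|^{-1/d}.
\]
Since $u\mapsto u^{-1/d}$ is decreasing, the isoperimetric profile obeys $\Phi(u)\geq c_d\, u^{-1/d}$ for every $u>0$ and $P$-almost every $\om$.

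\textbf{Applying Morris--Peres.} Theorem \ref{MP_theorem} demands $p(x,x)\geq\gamma>0$, so I pass to the lazy walk $\tilde p_\om=\tfrac12(I+p_\om)$, which has $\gamma=1/2$ and isoperimetric profile satisfying the same $u^{-1/d}$ lower bound (up to the factor $1/2$). Plugging this into \eqref{integral_condition}, the integral $\int_4^{4/\ep}du/(u\Phi^2(u))$ is bounded by a constant times $(1/\ep)^{2/d}$. Choosing $\ep=K/n^{d/2}$ for $K=K(d)$ sufficiently large makes \eqref{integral_condition} hold, yielding $\tilde p_\om^{\,n}(x,y)\leq K/n^{d/2}$. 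To return to the original walk, I bound the diagonal of $\tilde p_\om$ and use Cauchy--Schwarz: by reversibility with uniform stationary measure, $p_\om^{2n}(x,y)\leq\sqrt{p_\om^{2n}(x,x)\, p_\om^{2n}(y,y)}$, and the diagonal $p_\om^{2n}(x,x)$ is controlled by $\tilde p_\om^{\,2n}(x,x)$ via the spectral (monotone) relationship between the eigenvalues of $p_\om$ and $\tilde p_\om$, preserving the $n^{-d/2}$ rate. (No parity obstruction arises because the underlying graph is not bipartite for a generic $\om$.)

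\textbf{Main obstacle.} The substantive content is the isoperimetric step: the purely combinatorial Lemma \ref{lem_volume_surface} must be matched to a geometric estimate on the environment-dependent graph whose edges have random, unbounded length. The fiber-endpoint observation is what bridges this gap using only Claim \ref{The_model_claim}, and is the part of the argument that genuinely uses the structure of coordinate nearest neighbors. The lazification/transfer step, while needing care to preserve the exponent $d/2$, is standard and does not depend on the randomness of $\om$.
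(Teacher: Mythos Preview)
Your approach is essentially the paper's: Lemma~\ref{lem_volume_surface} feeds an isoperimetric lower bound $\Phi(u)\geq c\,u^{-1/d}$ into Morris--Peres. Your fiber-endpoint count is in fact a bit sharper than the paper's (which only uses the upper endpoint in one maximizing direction), but both yield the same profile.

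The one genuine gap is the transfer from the lazy chain back to $p_\om$. The parenthetical about non-bipartiteness is both unjustified and irrelevant: the graph \emph{can} be bipartite (e.g.\ whenever $\CP(\om)=\BZ^d$, which is a perfectly admissible environment), and even when it is not, non-bipartiteness of an infinite graph does not by itself bound the spectrum away from $-1$, so the naive eigenvalue comparison $\lambda^{2n}$ versus $((1+\lambda)/2)^{2n}$ fails near $\lambda=-1$. What does work, without any bipartiteness assumption, is the monotonicity $p_\om^{2(k+1)}(x,x)\leq p_\om^{2k}(x,x)$ (valid for every reversible chain by the spectral theorem) combined with the binomial expansion
\[
\tilde p_\om^{\,2n}(x,x)=\sum_{k=0}^{2n}\binom{2n}{k}2^{-2n}p_\om^{k}(x,x)\;\geq\;\Bigl(\sum_{\substack{0\leq k\leq 2n\\ k\text{ even}}}\binom{2n}{k}2^{-2n}\Bigr)\,p_\om^{2n}(x,x)\;=\;\tfrac12\,p_\om^{2n}(x,x),
\]
so $p_\om^{2n}(x,x)\leq 2\tilde p_\om^{\,2n}(x,x)\leq 2K/(2n)^{d/2}$; your Cauchy--Schwarz step and a one-step decomposition for odd times then finish the job.

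The paper sidesteps this transfer entirely by applying Theorem~\ref{MP_theorem} to the two-step kernel $p_\om^2$, which already satisfies $p_\om^2(x,x)=1/(2d)$, so Morris--Peres delivers $p_\om^{2n}(x,y)\leq K/n^{d/2}$ directly, and odd times follow from $p_\om^{2n+1}(x,y)=\sum_z p_\om(x,z)p_\om^{2n}(z,y)$. This is a little cleaner than the lazy-walk detour.
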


\begin{proof}
    We separate the discussion to the case of even times (i.e.~when $n$ is even) and odd ones starting with the first. Restricting the Markov chain only to those times, since $p^2_\om(x,x)=\frac{1}{2d}$, we can apply Theorem \ref{MP_theorem} with $\gamma=\frac{1}{2d}$. In order to get a good estimate on the heat kernel, i.e.~$p_\om^n(x,y)$, we need to show an appropriate lower bound on $\Phi(u)$. By Lemma \ref{lem_volume_surface} there exists a positive constant $C=C(d)$ with the following property: For $P$ almost every $\om\in\Om_0$ and every  $A\subset\CP(\om)$ of size $n$ at least one of the projections $\{\Pi^i(A)\}_{i=1}^d$ satisfies $\Pi^{i}(A)\geq C\cdot n^{\frac{d-1}{d}}$. Assume without loss of generality that this holds for $i=1$. Denote by $\widetilde{A}$ the "upper" boundary of $A$ in the first direction, i.e.
    \begin{equation}
        \widetilde{A}=\left\{(x_1,x_2,\ldots,x_d)~:~
        \begin{array}{c}
            (x_2,\ldots,x_d)\in\Pi^1(A)\\
            x_1=\max\{a:(a,x_2,x_3,\ldots,x_d)\in A\}
        \end{array}
        \right\}.\nonumber
    \end{equation}
    Thus $|\widetilde{A}|=|\Pi^1(A)|\geq C n^{(d-1)/d}$. By definition $|\partial A|$ equals $\frac{1}{2d}$ times the number of edges $e\in E$ with one end point in $A$ and the other in $A^c$. Since every element in $\widetilde{A}$ contributes at least one edge to the boundary we can conclude that $|\partial A|\geq \frac{1}{2d}|\widetilde{A}|$. Consequently there exists a positive constant $c_0=c_0(d)$ such that
    \begin{equation}
        \Phi(u)\geq  \frac{c_0}{u^{1/d}}. \label{Phi_estimation}
    \end{equation}
    Fix some positive constant $\tilde{K}=\tilde{K}(d)>1$ satisfying $\frac{6d(2d-1)^2\cdot4^\frac{2}{d}}{c_0^2\cdot \tilde{K}^\frac{2}{d}}<1$. From the definition of $\tilde{K}$ and using \eqref{Phi_estimation}, we get for $\ep=\frac{\tilde{K}}{n^\frac{d}{2}}$
    \begin{equation}
    \begin{aligned}
        1+(2d-1)^2\int_{4}^{4/\ep}{\frac{4du}{u\Phi^2(u)}} & \leq 1+(2d-1)^2\int_{4}^{4/\ep}{\frac{4u^{\frac{2}{d}-1}du}{c_0^2}}  \nonumber\\
        &= 1+\frac{2d(2d-1)^2\cdot 4^{\frac{2}{d}}}{c_0^2}  \ep^{-\frac{2}{d}} \nonumber\\
        &= 1+\frac{2d(2d-1)^2\cdot 4^{\frac{2}{d}}}{c_0^2\cdot \tilde{K}^\frac{2}{d}}n<1+\frac{1}{3}n. \label{isoperimetric}
    \end{aligned}
    \end{equation}
    The last term is smaller than $n$ whenever $n>1$. Thus\footnote{The fact that $\tilde{K}>1$ ensures that this also holds for $n=1$.} Theorem \ref{MP_theorem} gives that for $P$ almost every $\om\in\Om_0$ for every $x,y\in\CP(\om)$ and every $n\geq 1$
    \begin{equation}
        p_\om^{2n}(x,y)\leq \frac{\tilde{K}}{n^\frac{d}{2}}\leq \frac{2^\frac{d}{2}\tilde{K}}{(2n)^\frac{d}{2}},
        \nonumber
    \end{equation}
    which gives the result for even times with $K=2^\frac{d}{2}\tilde{K}$.

    Turning to odd times we get that for $P$ almost every $\om\in\Om_0$ every $n\in\BN$ and every $x,y\in\CP(\om)$
    \begin{equation}
        p^{2n+1}_\om(x,y)=\sum_{z\in\CP(\om)}{p_\om(x,z)p^{2n}_\om(z,y)}
        \leq \sum_{z\in\CP(\om)}p_\om(x,z)\frac{K}{(2n)^\frac{d}{2}}=\frac{K}{(2n)^\frac{d}{2}}\leq \frac{\left(\frac{3}{2}\right)^d K}{(2n+1)^\frac{d}{2}},
        \nonumber
    \end{equation}
    which completes the proof.
\end{proof}

Theorem \ref{tran_recu3} now follows immediately.

\begin{proof}[Proof of Theorem \ref{tran_recu3}]
    Since our graph is connected, it is enough to show that $\sum_{n=0}^{\infty}{p^n_\om(0,0)}$ is finite $P$ almost surely. This follows from Proposition \ref{bound_of_transitions} and the fact that $d\geq 3$.
\end{proof}


\section{Asymptotic behavior of the random walk}\label{sec:asymptotice_behavior}

This section is devoted to understanding the asymptotic behavior of $\BE[\|X_n\|]$. This estimation is used in section \ref{sec:high_dim_CLT} to prove the high dimensional Central Limit Theorem, and therefore throughout this section we also assume assumption \ref{assumption3}. The proof closely follows \cite{Ba03} with one major change: In the current model, the distance made by the random walk at each step is not bounded by $1$ as in the percolation model. Nevertheless, using an ergodic theorem of Nevo and Stein, see \cite{NS94}, we show that under assumption \ref{assumption3}, the same estimation for $\BE[\|X_n\|]$ as in percolation holds.

\begin{thm}
    Assume assumptions \ref{Assumptions} and \ref{assumption3} hold. Then there exists a random variable $c:\Om_0\rightarrow [0,\infty]$ which is finite almost surely such that for $P$ almost every $\om\in\Om_0$
    \begin{equation}
        \BE_\om[\|X_n\|]\leq c(\om)\sqrt{n},\quad\forall n\in\BN.
    \end{equation}
    \label{asymptotic_X_n}
\end{thm}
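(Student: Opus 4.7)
By the Cauchy--Schwarz inequality $\BE_\om[\|X_n\|]\le\sqrt{\BE_\om[\|X_n\|^2]}$, so it suffices to prove the diffusive second-moment bound
\begin{equation}
\BE_\om[\|X_n\|^2]\le c(\om)^2\, n,\qquad P\text{-a.e. }\om,\ \forall n\in\BN. \nonumber
\end{equation}
Write the increments $Y_k:=X_k-X_{k-1}$, let $\mu(\om):=\BE_\om[Y_1]=\frac{1}{2d}\sum_{e\in\CE}f_e(\om)\,e$ denote the quenched local drift, and let $\eta_k:=\gth_{X_k}\om$ be the environment seen from the walk. The symmetry $\BE_P[f_e]=\BE_P[f_{-e}]$ used in the proof of Theorem~\ref{LLN} gives $\BE_P[\mu]=0$, while Assumption~\ref{assumption3} ensures $\mu\in L^{2+\ep_0}(P)$. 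I would perform the standard martingale--drift decomposition
\begin{equation}
X_n=M_n+D_n,\quad M_n:=\sum_{k=1}^n\bigl(Y_k-\mu(\eta_{k-1})\bigr),\quad D_n:=\sum_{k=0}^{n-1}\mu(\eta_k), \nonumber
\end{equation}
so that $M_n$ is a $P_\om$-martingale with respect to the natural filtration of $(X_k)$, and treat the two pieces separately.

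\emph{Martingale piece.} Orthogonality of martingale differences yields
\begin{equation}
\BE_\om[\|M_n\|^2]=\sum_{k=0}^{n-1}\BE_\om[\|Y_{k+1}-\mu(\eta_k)\|^2]\le\sum_{k=0}^{n-1}\BE_\om[g(\eta_k)], \nonumber
\end{equation}
where $g(\om):=\frac{1}{2d}\sum_{e\in\CE}f_e(\om)^2\in L^{(2+\ep_0)/2}(P)$. For each $k$ the quantity $\BE_\om[g(\eta_k)]$ is precisely the $k$-th spherical average $(2d)^{-k}\sum_{|w|=k}g(\si_w\om)$ over words of length $k$ in the $2d$ measure-preserving induced shifts $\{\si_e\}_{e\in\CE}$ of Theorem~\ref{induced_shift_ergodicy}. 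The Nevo--Stein pointwise ergodic theorem for spherical averages applies since the integrability exponent $(2+\ep_0)/2$ is strictly greater than $1$, and yields $\BE_\om[g(\eta_k)]\to\BE_P[g]<\infty$ as $k\to\infty$ for $P$-a.e.\ $\om$. Hence $\sum_{k=0}^{n-1}\BE_\om[g(\eta_k)]\le c_1(\om)\,n$ and therefore $\BE_\om[\|M_n\|^2]\le c_1(\om)\,n$. This is exactly the step in which Nevo--Stein substitutes for the trivial unit bound on step sizes available in percolation-type models.

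\emph{Drift piece.} Expanding the square,
\begin{equation}
\BE_\om[\|D_n\|^2]=\sum_{j=0}^{n-1}\BE_\om[\|\mu(\eta_j)\|^2]+2\sum_{0\le j<k\le n-1}\BE_\om[\mu(\eta_j)\cdot\mu(\eta_k)]. \nonumber
\end{equation}
The diagonal sum is at most $c_2(\om)\,n$ by the same Nevo--Stein argument applied to $\|\mu\|^2\in L^{(2+\ep_0)/2}(P)$. For $j<k$ the Markov property gives $\BE_\om[\mu(\eta_j)\cdot\mu(\eta_k)]=\BE_\om[\mu(\eta_j)\cdot(\Lambda^{k-j}\mu)(\eta_j)]$, where $\Lambda$ is the kernel from~\eqref{Lambda}. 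The walk is reversible on $\CP(\om)$ with counting measure (one checks $p_\om(x,y)=p_\om(y,x)$ directly, since coordinate nearest neighbours satisfy $f_e(\gth_x\om)=f_{-e}(\gth_y\om)$), so $\Lambda$ is self-adjoint on $L^2(\Om_0,P)$. Combined with $\BE_P[\mu]=0$ and the moment bound on $\mu$, this lets one $\ell^1$-control $\sum_{m\ge 0}|\langle\mu,\Lambda^m\mu\rangle_{L^2(P)}|$, and then a pointwise-in-$\om$ transfer (once more via Nevo--Stein, applied to the tensor observables $\mu\otimes\mu$) allows the annealed control to be upgraded to $\BE_\om[\|D_n\|^2]\le c_3(\om)\,n$ for $P$-a.e.\ $\om$.

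Putting the two estimates together, $\BE_\om[\|X_n\|^2]\le 2\,\BE_\om[\|M_n\|^2]+2\,\BE_\om[\|D_n\|^2]\le c(\om)^2\, n$, and Cauchy--Schwarz finishes the proof.

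\emph{Main obstacle.} The delicate point is the drift bound in the second step: no quantitative spectral gap for $\Lambda$ is available here, so one must combine reversibility of the environment chain with the $2+\ep_0$-moment hypothesis to get the $O(n)$ bound on $\BE_\om[\|D_n\|^2]$. The Nevo--Stein theorem is the essential technical input that distinguishes this proof from the bounded-increment percolation setting of \cite{Ba03}: it is precisely what lets quenched time averages $\BE_\om[f(\eta_k)]$ be converted into spherical averages with known asymptotic behaviour, and so lets the random step size be absorbed into an almost-sure constant $c(\om)$.
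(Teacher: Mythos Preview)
Your approach is genuinely different from the paper's, which follows Barlow's entropy method rather than a martingale decomposition. The paper works with the averaged heat kernel $g_n=\tfrac12(p^n+p^{n-1})$, the distance functional $M(n)=\sum_y\|y\|g_n(y)$ and the entropy $Q(n)=-\sum_y g_n(y)\log g_n(y)$. Using a discrete Gauss--Green identity and Cauchy--Schwarz it proves the differential inequality $(M(n{+}1)-M(n))^2\le \upsilon_4(\om)\bigl(Q(n{+}1)-Q(n)\bigr)$, and telescopes this against the heat-kernel entropy bound $Q(n)\ge\tfrac d2\log(n-1)-c_1$ coming from Proposition~\ref{bound_of_transitions}. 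Nevo--Stein enters only to show that the constant $\upsilon_4(\om)$, essentially $\sup_n\sum_e\bigl(E_\om[f_e^2\circ\theta^{X_n}]+E_\om[f_e^2\circ\theta^{X_{n-1}}]\bigr)$, is a.s.\ finite. No drift term ever appears.

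Your martingale piece is essentially right but contains a slip: $\BE_\om[g(\eta_k)]$ is \emph{not} the $k$-th spherical average on the free group, since after $k$ steps the reduced word has random length $\le k$; one has $\BE_\om[g(\eta_k)]=\sum_j P(|Y_k|=j)\,\tau_j g(\om)$. The paper handles exactly this by pairing times $k-1,k$ to produce the operators $\tau'_j=\tfrac12(\tau_j+\tau_{j+1})$, to which Nevo--Stein (part~2) applies in $L^p$, $p>1$. With that fix your bound $\BE_\om[\|M_n\|^2]\le c_1(\om)n$ goes through.

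The genuine gap is the drift piece. First, the claimed $\ell^1$-control of $\sum_{m\ge0}|\langle\mu,\Lambda^m\mu\rangle|$ is unjustified: the Kipnis--Varadhan bound $\int(1-\lambda)^{-1}\,d\mu_V<\infty$ proved later in Section~\ref{sec:corrector} controls spectral mass near $\lambda=1$ but says nothing near $\lambda=-1$, so absolute summability can fail. More importantly, even the correct \emph{annealed} bound $\BE_P\bigl[\BE_\om[\|D_n\|^2]\bigr]=O(n)$ does not yield a quenched bound, and your proposed ``Nevo--Stein on tensor observables $\mu\otimes\mu$'' is not an argument: the cross terms $\BE_\om[\mu(\eta_j)\cdot(\Lambda^{k-j}\mu)(\eta_j)]$ are a two-parameter family, and applying Nevo--Stein to each $h_m=\mu\cdot\Lambda^m\mu$ separately yields $m$-dependent a.s.\ constants with no summability in $m$. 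Absent a spectral gap there is no mechanism in your sketch to produce the required quenched cancellation. This is precisely why the paper sidesteps the decomposition and extracts the diffusive bound directly from heat-kernel decay via entropy.
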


We start with some definitions:

\begin{defn}
    Fix $\om\in\Om_0$. For $n\in\BN$ we denote $p^n(x,y)=P_\om(X_n=y|X_0=x)$ and introduce the following functions, with the understanding that $0\cdot\log(0)=0$:
    \begin{itemize}

        \item The averaged two step probability $g_n:\CP(\om)\rightarrow\BR$, is given by
        \begin{equation}
            g_n(x)=\frac{1}{2}\left(p^n(0,x)+p^{n-1}(0,x)\right).
        \end{equation}

        \item Averaged two step distance $M:\BN\rightarrow\BR^+$ is defined by $M(0)=0$ and
        \begin{equation}
            M(n)=\frac{1}{2}\BE_\om\left[\|X_n\|+\|X_{n-1}\|\right]=\sum_{y\in\CP(\om)}{\|y\|g_n(y)},\quad\forall n>0.
        \end{equation}

        \item Averaged entropy $Q:\BN\rightarrow\BR^+$ is given by $Q(0)=0$ and
        \begin{equation}
            Q(n)=-\sum_{y\in\CP(\om)}{g_n(y)\log(g_n(y))},\quad\forall n>0.
        \end{equation}
    \end{itemize}
\end{defn}

The following proposition gives some inequalities which are satisfied by the functions $g_n,M$ and $Q$. Those will play a crucial rule in the proof of Theorem \ref{asymptotic_X_n}.

\begin{prop}\label{prop:asymptotic}
    There exist positive constants $c_1,c_2$ depending only on $d$ and random variables $\upsilon_3,\upsilon_4:\Om_0\rightarrow\BR$ which are $P$ almost surely finite and positive such that for every $n\in\BN$
    \begin{equation}
        Q(n)\geq \frac{d}{2}\log{(n-1)}-c_1, \label{distance_prop_1}
    \end{equation}

    \begin{equation}
        M(n)\geq c_2\cdot e^{\frac{Q(n)}{d}}, \label{distance_prop_2}
    \end{equation}

    \begin{equation}
        \sum_{x\in\CP(\om)}{\sum_{y\in\CP(\om)}{\ind_{\{y\in N_x(\om)\}}{(g_n(x)+g_n(y))\|x-y\|^2}}}<\upsilon_3, \label{distance_prop_3}
    \end{equation}
    and
    \begin{equation}
        (M(n+1)-M(n))^2\leq \upsilon_4(Q(n+1)-Q(n)). \label{distance_prop_4}
    \end{equation}
\end{prop}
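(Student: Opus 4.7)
The four inequalities are of quite different character. (\ref{distance_prop_1}) is a direct consequence of the on-diagonal heat-kernel bound (Proposition~\ref{bound_of_transitions}); (\ref{distance_prop_2}) is a classical entropy-distance inequality; (\ref{distance_prop_3}) is where Assumption~\ref{assumption3} enters; and (\ref{distance_prop_4}) is a Cauchy--Schwarz that couples (\ref{distance_prop_3}) with the standard entropy-dissipation inequality for reversible Markov chains. The overall strategy mirrors Barlow's percolation argument in \cite{Ba03}; the only genuinely new ingredient is (\ref{distance_prop_3}), because in Barlow's setting all edges have unit length and the bound is trivial.

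For (\ref{distance_prop_1}), Proposition~\ref{bound_of_transitions} gives $p_\om^k(0,y)\leq K/k^{d/2}$ for every $k\geq 1$ and $y\in\CP(\om)$, hence $g_n(y)\leq K/(n-1)^{d/2}$, so
\[
Q(n)=\sum_y g_n(y)\log\frac{1}{g_n(y)}\geq \log\frac{1}{\|g_n\|_\infty}\geq \frac{d}{2}\log(n-1)-\log K.
\]
For (\ref{distance_prop_2}), the Gibbs variational principle yields, for every $\lambda>0$,
\[
Q(n)\leq \log\sum_{x\in\BZ^d}e^{-\lambda\|x\|}+\lambda M(n)\leq d\log(c_d/\lambda)+\lambda M(n),
\]
using the lattice count $\sum_{x\in\BZ^d}e^{-\lambda\|x\|}\leq c_d/\lambda^d$ for $\lambda\in(0,1]$; optimizing $\lambda=d/M(n)$ yields (\ref{distance_prop_2}).

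For (\ref{distance_prop_3}), the symmetry $y\in N_x(\om)\Longleftrightarrow x\in N_y(\om)$ lets me rewrite the sum as $2\sum_{x\in\CP(\om)}g_n(x)\,F(\theta_x\om)$ with $F(\om):=\sum_{e\in\CE}f_e(\om)^2$. By Assumption~\ref{assumption3} one has $F\in L^{1+\ep_0/2}(P)$. Expanding $g_n=\tfrac12(p^n(0,\cdot)+p^{n-1}(0,\cdot))$, this sum equals $\tfrac12\bigl(E_\om[F(\theta_{X_n}\om)]+E_\om[F(\theta_{X_{n-1}}\om)]\bigr)$, i.e.\ a pointwise iterate of the Markov operator $\Lambda$ on the environment-from-the-walker chain applied to $F$. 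Since that chain is ergodic with stationary law $P$ (Proposition~\ref{ergodic_particle_view}) and $F\in L^p(P)$ for some $p>1$, the Nevo--Stein dominated ergodic theorem produces a $P$-a.s.\ finite random variable $\upsilon_3(\om)$ dominating $2\sup_n E_\om[F(\theta_{X_n}\om)]$, which is (\ref{distance_prop_3}).

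For (\ref{distance_prop_4}), the discrete heat equation $g_{n+1}(y)-g_n(y)=\tfrac{1}{2d}\sum_{x\in N_y(\om)}(g_n(x)-g_n(y))$ combined with summation-by-parts gives
\[
M(n+1)-M(n)=-\frac{1}{2d}\sum_{\{x,y\}\text{ edge}}(\|x\|-\|y\|)(g_n(x)-g_n(y)).
\]
Factor $g_n(x)-g_n(y)=(\sqrt{g_n(x)}+\sqrt{g_n(y)})(\sqrt{g_n(x)}-\sqrt{g_n(y)})$ and apply Cauchy--Schwarz with $|\|x\|-\|y\||\leq\|x-y\|$ and $(\sqrt a+\sqrt b)^2\leq 2(a+b)$ to obtain
\[
(M(n{+}1)-M(n))^2\leq \frac{1}{2d^2}\Bigl[\textstyle\sum_{\{x,y\}}\|x-y\|^2(g_n(x)+g_n(y))\Bigr]\Bigl[\sum_{\{x,y\}}(\sqrt{g_n(x)}-\sqrt{g_n(y)})^2\Bigr].
\]
The first bracket is $\leq \upsilon_3$ by (\ref{distance_prop_3}); the second is the Dirichlet form of $\sqrt{g_n}$ and, via the standard entropy-dissipation inequality for reversible Markov chains (ultimately the elementary bound $\Phi(a)+\Phi(b)-2\Phi(\tfrac{a+b}{2})\geq c(\sqrt a-\sqrt b)^2$ for $\Phi(t)=t\log t$, summed edge-by-edge), is at most a dimension-dependent multiple of $Q(n+1)-Q(n)$. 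This yields (\ref{distance_prop_4}) with $\upsilon_4$ proportional to $\upsilon_3$. The genuine obstacle throughout is (\ref{distance_prop_3}): a pure $L^1$ Birkhoff argument only gives a Ces\`aro-average bound, and it is precisely the extra $2+\ep_0$ moment together with a dominated/maximal ergodic theorem that upgrades this to the required uniform-in-$n$ quenched bound.
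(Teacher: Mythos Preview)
Your four–part outline matches the paper's exactly, and parts (\ref{distance_prop_1}) and (\ref{distance_prop_2}) are carried out the same way; your Gibbs-variational formulation of (\ref{distance_prop_2}) is just a repackaging of the paper's pointwise inequality $u(\log u+\lambda)\geq -e^{-1-\lambda}$ with $\lambda=a\|y\|+b$.

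The one genuine divergence is in (\ref{distance_prop_3}). The paper does \emph{not} apply a maximal inequality to the Markov operator $\Lambda$ directly. Instead it lifts the environment chain to the free group $F_d$ generated by the $2d$ induced shifts $\sigma_e$, observes that $X_n$ is the image of the simple random walk $Y_n$ on $F_d$, decomposes
\[
E_\om\bigl[f_e^2\circ\theta_{X_n}\bigr]=\sum_{k\geq 0}P(Y_n\in S_k)\,\tau_k f_e^2
\]
in terms of the spherical averages $\tau_k$ on $F_d$, pairs consecutive $n$'s to produce the two-point averages $\tau_k'$, and then invokes the Nevo--Stein pointwise ergodic theorem for free groups in $L^p$, $1<p<\infty$, to bound $\sup_k\tau_k' f_e^2$. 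Your route --- recognising $E_\om[F(\theta_{X_n}\om)]=(\Lambda^n F)(\om)$ and bounding $\sup_n \Lambda^n F$ directly --- is legitimate and in fact simpler, but the theorem you need is \emph{Stein's} maximal inequality for self-adjoint positive contractions on $L^p$ (applicable because $\Lambda$ is reversible and $F\in L^{1+\ep_0/2}(P)$), not Nevo--Stein. Proposition~\ref{ergodic_particle_view} plays no role in either argument; only self-adjointness of $\Lambda$ (respectively, the free-group structure) matters.

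For (\ref{distance_prop_4}) you and the paper run the same Gauss--Green / Cauchy--Schwarz scheme; your factorisation through $\sqrt{g_n}$ is equivalent up to a factor of $2$ to the paper's weight $(g_n(x)-g_n(y))^2/(g_n(x)+g_n(y))$. Your last step, however, does not quite work as written: summing the pairwise Jensen gap $\Phi(a)+\Phi(b)-2\Phi\bigl(\tfrac{a+b}{2}\bigr)$ over edges does \emph{not} produce $Q(n+1)-Q(n)$, because $g_{n+1}(y)=\tfrac{1}{2d}\sum_{x\in N_y(\om)}g_n(x)$ is a $2d$-fold average, not a pairwise one, so the edge sum of pairwise gaps is not the entropy increment. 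The paper instead uses $(u-v)^2/(u+v)\leq (u-v)(\log u-\log v)$, applies Gauss--Green a second time to rewrite the Dirichlet-log form as $-4d\sum_y(g_{n+1}(y)-g_n(y))\log g_n(y)$, and then closes with the elementary inequality $1-x+\log x\leq 0$ to reach $Q(n+1)-Q(n)$; that is the route you should follow to make this step rigorous.
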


\begin{rem}
    Note that we don't have any estimation on the tail of $c_3(\om)$ nor $c_4(\om)$.
\end{rem}

\begin{proof}
    For \eqref{distance_prop_1} first note that from the definition of $Q(n)$
    \begin{equation}
        Q(n)\geq \inf_{y\in\CP(\om)}(-\log(g_n(y)))=-\sup_{y\in\CP(\om)}{(\log(g_n(y)))}.\nonumber
    \end{equation}
    Proposition \ref{bound_of_transitions} implies that $g_n(y)\leq \frac{K}{(n-1)^{\frac{d}{2}}}$ for every $y\in\CP(\om)$  and therefore
    \begin{equation}
    Q(n)\geq -\log\left(\frac{K}{(n-1)^\frac{d}{2}}\right)=\frac{d}{2}\log(n-1)-\log(K),
    \end{equation}
    which gives \eqref{distance_prop_1} with $c_1=\log(K)$.

    Next we prove \eqref{distance_prop_2}. For $n\geq0$ let $D_n=B_{2^n}(0)\backslash B_{2^{n-1}}(0)$, where $B_n(0)=\{x\in\BZ^d ~:~|x|\leq n\}$. In particular $D_0=\{0\}$. Given that $0\leq a\leq 2$ we can write
    \begin{equation}
        \sum_{y\in\CP(\om)}{e^{-a\|y\|}} \leq \frac{1}{2}\sum_{n=0}^{\infty}{\sum_{y\in D_n}{e^{-a\cdot 2^n}}}\leq \sum_{n=0}^{\infty}{e^{-a\cdot 2^n}\cdot c_{2.1}\cdot 2^{nd}}\leq c_{2.2}\cdot a^{-d},
        \label{distance_inequality_1}
    \end{equation}
    where $c_{2.2}=c_{2.2}(d)>0$ depends only on $d$. Indeed, the first inequality is obvious, the second inequality follows from the fact that the set of points in $\CP(\om)$ with distance greater than $2^{n-1}$ and less than $2^n$ is bounded by the number of points in $\BZ^d$ with those properties, which is less  than a constant times $2^{nd}$.
   \ifarxiv
    Indeed, first, we can restrict ourselves to $0<a<\ep$ for any fixed $\ep>0$. This follows from the fact that both expressions are monotonic in $a$.
    \begin{equation}
    \begin{aligned}
        \sum_{n=0}^{\infty}{e^{-a\cdot 2^n}\cdot 2^{nd}}& \leq 1+\sum_{n=1}^\infty \frac{1}{1-2^{-d}}\sum_{k=2^{n-1}d}^{2^n d}e^{-a2^n}
        \leq \frac{1}{1-2^{-d}} \sum_{k=0}^\infty e^{-ak^{1/d}}\\
        &=\frac{1}{1-2^{-d}} \sum_{j=0}^\infty e^{-aj}\#\{j<k^{1/d}\leq j+1\}
        =\frac{1}{1-2^{-d}} \sum_{j=0}^\infty e^{-aj}((j+1)^d-j^d).\nonumber
    \end{aligned}
    \end{equation}
    Since there exists a constant $c=c(d)>0$ such that for every $j\geq 0$ we have $(j+1)^d-j^d\leq cj^{d-1}$ the last term is less than or equal to
    \[\label{appendix_a1}
        \frac{c}{1-2^{-d}}\sum_{j=0}^\infty e^{-aj}j^{d-1}.
    \]
    For $j\geq 0$ denote $\alpha_j=e^{-aj}j^{d-1}$ and define $j_0=\min\left\{j\geq 0 ~:~ \forall i\geq j~~\frac{a_{i+1}}{a_i}<e^{-a/2}\right\}.$ From the definition of $j_0$ it follows that \eqref{appendix_a1} is less than
    \begin{equation}\label{appendix_a2}
        j_0+\alpha_{j_0}\sum_{j=0}^\infty e^{-\frac{aj}{2}}=j_0+\frac{\alpha_{j_0}}{1-e^{-a/2}}.
    \end{equation}
    From the definition of $j_0$ one can see that $j_0=\left\lceil\frac{1}{e^{\frac{a}{2d}}-1}\right\rceil\leq \left\lceil\frac{2d}{a}\right\rceil$, and therefore \eqref{appendix_a2} is less than
    \begin{equation}
        \left\lceil\frac{2d}{a}\right\rceil+e^{-a \left\lceil\frac{2d}{a}\right\rceil}\left\lceil\frac{2d}{a}\right\rceil^{d-1}
    \end{equation}
    as required.
    \else
        The proof of the last inequality follows by separating the series into two parts, up to some $n_0=\left\lceil\frac{1}{e^{\frac{a}{2d}-1}}\right\rceil$ and starting from $n_0$,  and then bounding the second one by a geometric series. More formal proof of this inequality can be found in the detailed version of this paper on the Arxiv,  see \cite{RR10}.
    \fi
    Since for every $u>0$ and $\lambda\in\BR$ the inequality $u(\log(u)+\lambda)\geq -e^{-1-\lambda}$ holds, by taking $\lambda=a\|y\|+b$ with $a\leq 2$ and $u=g_n(y)$ we get
    \begin{equation}\label{eq:one_more}
    \begin{aligned}
        -Q(n)+aM(n)+b&=\sum_{y\in\CP(\om)}{g_n(y)\left(\log(g_n(y))+a\|y\|+b\right)}\\
        &\geq-\sum_{y\in\CP(\om)}{e^{-1-a\|y\|-b}=-e^{-1-b}\sum_{y\in\CP(\om)}{e^{-a\|y\|}}}.
    \end{aligned}
    \end{equation}

    Note that we actually used the last inequality only for those $y\in\CP(\om)$ such that $g_n(y)>0$, and for $y\in\CP(\om)$ such that $g_n(y)=0$ we used the fact that $0\geq -e^{-1-a\|y\|-b}$. Combining \eqref{eq:one_more} and \eqref{distance_inequality_1} gives
    \begin{equation}
        -Q(n)+aM(n)+b\geq -e^{-1-b} c_{2.2} a^{-d}.
        \label{ditance_inequality_2.1}
    \end{equation}
    Since for sufficiently large $n$ we have
    \begin{equation}
        M(n)=0\cdot g_n(0)+\sum_{y\in\CP(\om),y\neq 0}{d(0,y)g_n(y)}\geq \sum_{y\in\CP(\om),y\neq 0}{g_n(y)}=1-g_n(0)\geq \frac{1}{2},
        \nonumber\label{distance_inequality_2.2}
    \end{equation}
    we can choose $a=\frac{1}{M(n)}$ and $b=d\cdot \log M(n)$, which together with \eqref{ditance_inequality_2.1}  gives
    \begin{equation}
        -Q(n)+1+d\cdot\log M(n)\geq -e^{-1} c_{2.2}=-c_{2.3}. \nonumber
    \end{equation}
    Note that as before $c_{2.3}$ is a positive constant that depends only on $d$. Rearranging the last inequality we get that there exists a constant $c_2=c_2(d)>0$ such that $M(n)\geq c_2\cdot e^\frac{Q(n)}{d}$.

    Turning to the prove \eqref{distance_prop_3} we first note that the sum in \eqref{distance_prop_3} can be rewritten as
    \small{
    \begin{equation}
    \begin{aligned}
        \sum_{x,y\in\CP(\om)}{\ind_{\{y\in
        N_x(\om)\}}(g_n(x)+g_n(y))\|x-y\|^2}
        &=2\sum_{x\in\CP(\om)}{g_n(x)\sum_{y\in N_x(\om)}{\|x-y\|^2}}\\
        &= 2\sum_{e\in\CE}{\sum_{x\in\CP(\om)}{g_n(x)f_e^2(\gth^{x}\om)}}\\
        &=2\sum_{e\in\CE}{\left(E_\om[f_e^2\circ
        \gth^{X_n}]+E_\om[f_e^2\circ \gth^{X_{n-1}}]\right)}.
    \end{aligned}
    \label{eq:original_sum}
    \end{equation}
    }\normalsize{}
    In order to show the sum is finite, we use a Theorem by Nevo and Stein proved in \cite{NS94}, however before we can state it some additional definitions are needed:

    Given a countable group $\Gamma$ define $\ell^1(\Gamma)=\left\{\mu\in\Gamma^\BR ~:~ \sum_{\gamma\in\Gamma}|\mu(\gamma)|<\infty\right\}$. Let $(X,\mathfrak{B},m)$ be a standard Lebesgue probability space, and assume $\Gamma$ acts on $X$ by measurable automorphisms preserving the probability measure $m$. This action induces a representation of $\Gamma$ by isometries on the $L^p(X)$ spaces, $1\leq p\leq \infty$, and this representation can be extended to  $\ell^1(\Gamma)$ by $(\mu f)(x)=\sum_{\gamma\in\Gamma}{\mu(\gamma)f(\gamma^{-1}x)}$. Let $\mathfrak{B}_1=\{A\in\mathfrak{B}:m(\gamma A\bigtriangleup  A)=0~~\forall \gamma\in\Gamma\}$ denote  the sub $\si$-algebra of invariant sets, and denote by $E_1$ the conditional expectation with respect to $\mathfrak{B}_1$. We call a sequence $\nu_n\in \ell^1(\Gamma)$ a pointwise ergodic sequence in $L^p$ if, for any action of $\Gamma$ on a Lebesgue space X which  preserves a probability measure and for every $f\in L^p(X)$, $\nu_nf(x)\rightarrow E_1[f(x)]$ for $m$ almost every $x\in X$, and in the norm of $L^p(X)$. If  $\Gamma$ is finitely generated, let $S$ be a finite generating symmetric set, i.e.~$S=S^{-1}$ which doesn't include the identity element $e$. $S$ induces a length function on $\Gamma$, given by $|\gamma|=|\gamma|_S=\min\{n:\gamma=s_1s_2\ldots s_n~,~s_i\in S\}$, and $|e|=0$. We can therefore define the following sequences:

    \begin{defn} \label{strange_notations}
    $~$\\ \vspace{-5mm}
    \begin{enumerate}
        \item[(i.)] $\tau_n=(\# S_n)^{-1}\sum_{w\in S_n} w$, where $S_n=\{w:|w|=n\}$.

        \item[(ii.)] $\tau '_n=\frac{1}{2}(\tau_n+\tau_{n+1})$.

        \item[(iii.)] $\mu_n=\frac{1}{n+1}\sum_{k=0}^{n}{\tau_k}$.

        \item[(iv.)] $\beta_n=(\# B_n)^{-1}\sum_{w\in B_n}w$, where $B_n=\{w:|w|\leq n\}$.

    \end{enumerate}
    \end{defn}

    We can now state the theorem:

    \begin{thm} [Nevo, Stein 94]
        Consider the free group $F_r$, $r\geq 2$ and let $S$ be a set of free generators and their inverses. Then:
        \begin{enumerate}
            \item[1.] The sequence $\mu_n$ is a pointwise ergodic sequence in $L^p$, for all $1\leq p< \infty$.

            \item[2.] The sequence $\tau '_n$ is a pointwise ergodic sequence in $L^p$, for $1<p<\infty$.

            \item[3.] $\tau_{2n}$ converges to an operator of conditional expectation with respect to an $F_r$-invariant sub $\si$-algebra. $\beta_{2n}$ converges to the operator $E_1+\frac{r-1}{r}E$, where E is a projection disjoint from $E_1$. Given $f\in L^p(X)$, $1<p<\infty$, the convergence is pointwise almost everywhere, and in the $L^p$ norm.
        \end{enumerate}
        \label{Birkhoff-generalization}
    \end{thm}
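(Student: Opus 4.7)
The stated theorem is a substantial ergodic theorem for actions of the non-amenable group $F_r$, and my plan would follow the spectral/harmonic-analytic strategy that is standard for ergodic theorems on groups with a well-understood radial calculus. The overall reduction is to two ingredients: (a) pointwise convergence on a norm-dense subclass of $L^p(X)$, and (b) a maximal inequality of the form
\[
    \left\| \sup_{n} |\nu_n f| \right\|_{p} \leq C_p \|f\|_{p}
\]
for the particular sequence $\nu_n\in\{\mu_n,\tau'_n,\tau_{2n}\}$ under consideration. Once (a) and (b) are in hand, pointwise convergence on all of $L^p(X)$ follows by the standard Banach-principle argument.

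For (a), I would exploit the tree structure of the Cayley graph of $F_r$ with respect to $S$. Radial functions on $F_r$ form a commutative convolution algebra, and the sphere averages $\tau_n$ satisfy a three-term recursion driven by the constant branching number $2r-1$ of the tree. This recursion diagonalises the convolution operators $\lambda(\tau_n)$ on $\ell^2(F_r)$ through spherical functions $\phi_n(z)$ and yields an integral representation
\[
    \lambda(\tau_n) \;=\; \int \phi_n(z)\, dE(z),
\]
where $E$ is the joint spectral measure of the algebra. Asymptotics of $\phi_n$ (concentration at the spectral point corresponding to the trivial representation, quantitative decay elsewhere) give convergence of $\lambda(\nu_n)f$ to the claimed conditional expectation for $f$ in a dense subalgebra, and a standard Calder\'on-type transference then passes from the regular representation to an arbitrary probability-measure-preserving action.

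For (b), I would build a Littlewood--Paley-type square function
\[
    Gf \;=\; \left( \sum_n |(\nu_{n+1} - \nu_n) f|^2 \right)^{1/2},
\]
dominate the maximal operator by $Gf$ plus a tail term, and bound $\|Gf\|_p$ via the spectral representation, verifying the necessary square-sum estimates for the symbols $\phi_{n+1}(z)-\phi_n(z)$ after the appropriate averaging. The parity obstruction, namely that consecutive $\tau_n$ act on opposite classes of the bipartite tree so $\tau_n$ by itself cannot converge, is exactly what forces one to use either the Ces\`aro average $\mu_n$ or the two-step average $\tau'_n$; for $p=1$ only $\mu_n$ retains enough smoothing, which is the reason for the different $p$-ranges in items (1) and (2).

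The main obstacle is the maximal inequality in the non-amenable setting: one cannot fall back on Besicovitch-type covering or Calder\'on transference along amenable F\o lner sets, and one is forced to use the explicit spherical/spectral picture from the tree to get $L^p$ bounds with sharp constants. The extra subtlety in item (3), where $\tau_{2n}$ converges to $E_1+\tfrac{r-1}{r}E$ rather than to $E_1$ alone, reflects the fact that $\tau_{2n}$ still detects the spectral point associated with the alternating sign representation of $F_r$, whereas the smoothed averages $\mu_n$ and $\tau'_n$ annihilate that contribution and leave only the invariant part.
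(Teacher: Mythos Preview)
The paper does not prove this theorem. It is quoted verbatim as a result of Nevo and Stein \cite{NS94} and used as a black box in the proof of \eqref{distance_prop_3}; no argument for it is given anywhere in the paper. So there is nothing to compare your proposal against on the paper's side.

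That said, your outline is a reasonable high-level description of the strategy in the original Nevo--Stein paper: the reduction via the Banach principle to a dense-class statement plus a maximal inequality, the use of the commutative radial convolution algebra on the $(2r{-}1)$-regular tree and the resulting spectral/spherical-function calculus, and the role of parity in forcing the two-step averaging $\tau'_n$. One point worth sharpening: in \cite{NS94} the maximal inequality is obtained not purely through a Littlewood--Paley square function as you suggest, but primarily via a ``rotation'' argument that expresses $\tau'_n$ (up to controlled error) as an average of powers of a single self-adjoint Markov operator, thereby reducing the maximal inequality to the classical Hopf--Dunford--Schwartz maximal ergodic theorem for positive contractions. Your square-function route is plausible in spirit but would require substantially more work to make rigorous in the $L^p$ range claimed, whereas the rotation/transfer method gives the $L^p$ maximal bound almost for free once the spectral representation is in hand. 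If you intend to actually write out a proof rather than cite \cite{NS94}, that is the step you would need to replace or justify carefully.
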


    Let $F$ be the (free) group generated by the induced shifts, let $\{Y_n\}$ be a simple random walk on it and $S_k=\{v\in F~:~|v|=k\}$. Then,
    \begin{equation}
        \begin{array}{rcl}
        E_\omega [f_e^2\circ \theta^{X_n}] & = &\sum_{v\in F}P(Y_n=v)f_e^2\circ \theta^{v}\\
        & = &\sum_{k=0}^\infty P(Y_n\in S_k)\frac{1}{|\#S_k|}\sum_{v\in S_k} f_e^2\circ \theta^v\\
        & = & \sum_{k=0}^\infty P(Y_n\in S_k) \tau_k\circ f_e^2,
        \end{array}
        \nonumber
    \end{equation}
    and therefore
    \small{
    \begin{equation}
        \begin{array}{rcll}
            E_\omega [f_e^2\circ \theta^{X_n}]+E_\omega [f_e^2\circ \theta^{X_{n-1}}] & = & \sum_{k=0}^\infty P(Y_n\in S_k)\tau_k\circ f_e^2+P(Y_{n-1}\in S_k)\tau_k\circ f_e^2 &\\
            &\leq & \sum_{k=1}^\infty \Big(P(Y_n\in S_k)+P(Y_{n-1}\in S_{k-1})\Big) \Big(\tau_k\circ f_e^2+\tau_{k-1}\circ f_e^2\Big)&\\
            && + P(Y_n\in S_0)f_e^2&\\
            & = & \sum_{k=1}^{\infty} 2\Big(P(Y_n\in S_k)+P(Y_{n-1}\in S_{k-1})\Big)\tau_{k-1}'\circ f_e^2+ P(Y_n\in S_0)f_e^2&
        \end{array}
        \nonumber
    \end{equation}
    }
    By assumption \ref{assumption3} there exists some $1<p<\infty$ such that $f_e^2\in L^p(\Om_0)$ for every coordinate direction $e\in\CE$. Using Theorem \ref{Birkhoff-generalization} and the ergodicity of $P$ it follows that $\sup_k\{|\tau_k'\circ f_e^2|\}$ is bounded by some constant $\upsilon_{3.1}(\omega)$ which is finite $P$ almost surely, and therefore the sum in \eqref{eq:original_sum} is bounded by
    \footnotesize{
    \begin{equation}
        \begin{array}{rcll}
            2\sum_{e\in\CE}{\left(E_\om[f_e^2\circ \gth^{X_n}]+E_\om[f_e^2\circ \gth^{X_{n-1}}]\right)} &\leq & 4\upsilon_3(\omega) \sum_{k=1}^\infty \Big(P(Y_n\in S_k)+P(Y_{n-1}\in S_{k-1})\Big)&\\
            && +  2\upsilon_3(\omega) P(Y_n\in S_0)&\\
            &=&8\upsilon_3(\omega).&
        \end{array}
        \nonumber
    \end{equation}
    }
    Consequently, the original sequence is bounded by $\upsilon_3(\om)=8\upsilon_{3.1}(\om)$ $P$ almost surely.

    Finally we turn to prove \eqref{distance_prop_4}. By the definition of $M(n)$
    \begin{equation}
        M(n+1)-M(n)=\sum_{y\in\CP(\om)}{(g_{n+1}(y)-g_n(y))\|y\|}. \nonumber
    \end{equation}
    Using the discrete Gauss Green formula, this sum can be written as
    \begin{equation}\label{Green_formula}
        -\frac{1}{4d}\sum_{x,y\in\CP(\om)}{\ind_{\{y\in N_x(\om)\}}(\|y\|-\|x\|)(g_n(y)-g_n(x))}.
    \end{equation}
    Indeed, three different sum rearrangements (recalling all sums are finite and that $|N_x(\om)|=2d<\infty$ for every point $x\in\CP(\om)$) give
    \begin{equation}
        \begin{array}{rcll}
            \sum_{y\in\CP(\om)}{(g_{n+1}(y)-g_n(y))\|y\|} &=& -\frac{1}{4d}&\left[ ~~2d\sum_{y\in\CP(\om)}{\|y\|g_n(y)}+2d\sum_{x\in\CP(\om)}{\|x\|g_n(x)}\right.\\
            &&&~\left.-2d\sum_{y\in\CP(\om)}{\|y\|g_{n+1}(y)}-2d\sum_{x\in\CP(\om)}{\|x\|g_{n+1}(x)}\right]\\
            &=&-\frac{1}{4d}&\left[~~\sum_{y\in\CP(\om)}{\|y\|g_n(y)\sum_{x\in\CP(\om)}{\ind_{y\in N_x(\om)}}}\right.\\
            &&&~+\sum_{x\in\CP(\om)}{\|x\|g_n(x)\sum_{y\in\CP(\om)}{\ind_{y\in N_x(\om)}}}\\
            &&&~-\sum_{y\in\CP(\om)}{\|y\|\sum_{x\in\CP(\om)}{\ind_{y\in N_x(\om)}g_n(x)}}\\
            &&&~\left.-\sum_{x\in\CP(\om)}{\|x\|\sum_{y\in\CP(\om)}{\ind_{y\in N_x(\om)}g_n(y)}}\right]\\
            &=&-\frac{1}{4d}&\sum_{x,y\in\CP(\om)}\left[\ind_{y\in N_x(\om)}\|y\|g_n(y)-\ind_{y\in N_x(\om)}\|x\|g_n(y)\right.\\
            &&&~~\quad\quad\quad\quad\left.-\ind_{y\in N_x(\om)}\|y\|g_n(x)+\ind_{y\in N_x(\om)}\|x\|g_n(x)\right]\\
            &=&-\frac{1}{4d}&\sum_{x,y\in\CP(\om)}{\ind_{\{y\in N_x(\om)\}}(\|y\|-\|x\|)(g_n(y)-g_n(x))}.
        \end{array}
        \nonumber
    \end{equation}

    Using the last presentation for $M(n+1)-M(n)$ and the triangle inequality gives
    \begin{equation}
        |M(n+1)-M(n)|\leq \frac{1}{4d}\sum_{x,y\in\CP(\om)}{\ind_{\{y\in N_x(\om)\}}\|x-y\|\left|g_n(y)-g_n(x)\right|}.\nonumber
    \end{equation}
    Applying Cauchy Schwartz inequality to the r.h.s we get
    \begin{equation}
        \begin{array}{rcll}
            |M(n+1)-M(n)|&\leq&\frac{1}{4d}&\left(\sum_{x,y\in\CP(\om)}{\ind_{\{y\in N_x(\om)\}}(g_n(x)+g_n(y))\|x-y\|^2}\right)^\frac{1}{2}\nonumber\\
            &&~~\cdot &\left(\sum_{x,y\in\CP(\om)}{\ind_{\{y\in N_x(\om)\}}\frac{(g_n(y)-g_n(x))^2}{g_n(y)+g_n(x)}}\right)^\frac{1}{2}.\nonumber
        \label{distance_inequality_3.1}
        \end{array}
    \end{equation}

    The first sum in the r.h.s is the same as \eqref{distance_prop_3} and therefore is bounded by some random variable $\upsilon_3=\upsilon_3(\om)$ which is positive and finite $P$ almost surely. Thus
    \begin{equation}
        |M(n+1)-M(n)|\leq \upsilon_3(\om)\left(\sum_{x,y\in\CP(\om)}{\ind_{\{y\in N_x(\om)\}}\frac{(g_n(y)-g_n(x))^2}{g_n(y)+g_n(x)}}\right)^\frac{1}{2}.
        \nonumber
    \end{equation}
    The fact that $\frac{(u-v)^2}{u+v}\leq(u-v)\left(\log(u)-\log(v)\right)$ for every $u,v>0$ yields
    \footnotesize{
    \begin{equation}
        |M(n+1)-M(n)|\leq \upsilon_3(\om)\left(\sum_{x,y\in\CP(\om)}{\ind_{\{y\in N_x(\om)\}}\Big(g_n(y)-g_n(x)\Big)\Big(\log(g_n(y))-\log(g_n(x))\Big)}\right)^\frac{1}{2}
        \nonumber
    \end{equation}
    }\normalsize{}
    which by applying the discrete Gauss Green formula the other way around equals
    \footnotesize{
    \begin{equation}
        \sqrt{4d}\upsilon_3(\om)\left(-\sum_{y\in\CP(\om)}{\Big(\log(g_n(y))+1\Big)\Big(g_{n+1}(y)-g_n(y)\Big)}\right)^\frac{1}{2}.
        \nonumber
    \end{equation}
    }\normalsize{}
    Finally, since $1-x+\log(x)\leq 0$ for all $x>0$, the last term is bounded by
    \footnotesize{
    \begin{equation}
        \sqrt{4d}\upsilon_3(\om)\left(-\sum_{y\in\CP(\om)}{\Big(g_{n+1}(y)-g_n(y)\Big)\log(g_n(y))+g_{n+1}(y)
        \log\left(\frac{g_{n+1}(y)}{g_n(y)}\right)}\right)^\frac{1}{2}=\upsilon_4\Big(Q(n+1)-Q(n)\Big)^\frac{1}{2},\nonumber
    \end{equation}
    }\normalsize{}
    where $\upsilon_4=(\sqrt{4d}\upsilon_3)^2$.
\end{proof}
\vspace{0.2cm}

\begin{proof}[Proof of Theorem \ref{asymptotic_X_n}]
    Define $R:\BN\rightarrow\BR$ by
    \begin{equation}
        R(n)=\frac{1}{d}\Big(Q(n)-\frac{d}{2}\log(n-1)+c_1\Big),
    \end{equation}
    for $n>1$ and $R(1)=0$. By \eqref{distance_prop_2} for sufficiently large $n$
    \begin{equation}
        M(n)\geq c_2\cdot e^\frac{Q(n)}{d}=c_2\cdot e^{R(n)+\frac{c_1}{d}+\frac{1}{2}\log(n-1)}=c_{5.1}e^{R(n)}\sqrt{n-1}
    \label{distance_inequality_last1}
    \end{equation}
    with $c_{5.1}$ some positive constant depending only on $d$. On the other hand by Proposition \ref{prop:asymptotic}
    \begin{equation}
        \begin{array}{rcl}
            M(n) & = &\sum_{k=1}^{n}\left(M(k)-M(k-1)\right) \nonumber \\
            & \leq & \sqrt{c_4}\sum_{k=1}^{n}{\Big(Q(k)-Q(k-1)\Big)^\frac{1}{2}} \nonumber \\
            & \leq & c_{5.2}\sum_{k=3}^{n}{\Big(Q(k)-Q(k-1)\Big)^\frac{1}{2}} \nonumber \\
            & = &c_{5.2}\sqrt{d}\sum_{k=3}^{n}{\left(R(k)-R(k-1)+\frac{1}{2}\log\left(\frac{k-1}{k-2}\right)\right)^{\frac{1}{2}}}.
            \nonumber
        \end{array}
    \end{equation}
    Denote $c_{5.3}=c_{5.2}\sqrt{d}$. Since $(a+b)^\frac{1}{2}\leq b^\frac{1}{2}+\frac{a}{(2b)^\frac{1}{2}}$ the r.h.s can be bounded by
    \begin{equation}
        \begin{array}{rcl}
            &&c_{5.3}\sum_{k=3}^{n}{\left[\frac{1}{\sqrt{2}}\log^\frac{1}{2}\left(\frac{k-1}{k-2}\right)+
            \frac{R(k)-R(k-1)}{\log^\frac{1}{2}\left(\frac{k-1}{k-2}\right)}\right]}\\
            &=& c_{5.3}\sum_{k=3}^{n}{\frac{1}{\sqrt{2}}\log^\frac{1}{2}\left(\frac{k-1}{k-2}\right)}+
            c_{5.3}\sum_{k=3}^{n}{\left[\frac{R(k)}{\log^\frac{1}{2}\left(\frac{k}{k-1}\right)}-\frac{R(k-1)}{\log^\frac{1}{2}\left(\frac{k-1}{k-2}\right)}
            \right]}\\
            &&-c_{5.3}\sum_{k=3}^{n}{R(k)\left[\frac{1}{\log^\frac{1}{2}\left(\frac{k}{k-1}\right)}-\frac{1}{\log^\frac{1}{2}\left(\frac{k-1}{k-2}\right)}
            \right]}\\
            &\leq & c_{5.3}\sum_{k=3}^{n}{\frac{1}{\sqrt{2}}\log^\frac{1}{2}\left(\frac{k-1}{k-2}\right)}+
            c_{5.3}\sum_{k=3}^{n}{\left[\frac{R(k)}{\log^\frac{1}{2}\left(\frac{k}{k-1}\right)}-\frac{R(k-1)}{\log^\frac{1}{2}\left(\frac{k-1}{k-2}\right)}
            \right]}\\
            &= & c_{5.3}\sum_{k=3}^{n}{\frac{1}{\sqrt{2}}\log^\frac{1}{2}\left(\frac{k-1}{k-2}\right)}+
            c_{5.3}\frac{R(n)}{\log^\frac{1}{2}\left(\frac{n}{n-1}\right)},
        \end{array}
        \nonumber
    \end{equation}
    where for the inequality we used the fact that $R(k)$ is positive (due to \eqref{distance_prop_1}). Since $\frac{1}{2{k-2}}\leq  \log\left(\frac{k-1}{k-2}\right)=\log\left(1+\frac{1}{k-2}\right)<\frac{1}{k-2}$ this can be bounded by
    \begin{equation}
        \frac{c_{5.3}}{\sqrt{2}}\sum_{k=3}^{n}{\frac{1}{\sqrt{k-2}}}+\sqrt{2dc_3}R(n)\sqrt{n-1} \leq c_{5.4}\cdot(1+R(n))\sqrt{n-2},
    \nonumber
    \end{equation}
    with $c_{5.4}=c_{5.4}(\om)$. Combining all of the above we get that
    \begin{equation}
        c_{5.1}(d)\cdot e^{R(n)}\sqrt{n-1}\leq M(n)\leq c_{5.4}(\om) (1+R(n))\sqrt{n-2},
        \nonumber
    \end{equation}
    which implies that $R(n)$ is a bounded function $P$ almost surely. Thus one can find two random variables $c_{5.5},c_{5.6}:\Om_0\rightarrow\BR$, which are $P$ almost surely finite and positive, such that
    \begin{equation}
        c_{5.5}\sqrt{n} \leq M(n) \leq c_{5.6}\sqrt{n}. \nonumber
    \end{equation}
    Recalling the definition of $M(n)$, this yields the result.
\end{proof}


\section{Corrector - Construction and harmonicity}\label{sec:corrector}

In this section, we adapt the construction of the corrector presented in \cite{BB06} to our model. The corrector, originated in a paper by Kipnis and Varadhan (see \cite{KV86}) gives a decomposition of random variables into a martingale and a part which is $o(\sqrt{n})$. In our case, as in \cite{Bb06}, this is used to construct a graph deformation (perturbation of the graph embedding in $\mathbb{R}^d$) such that the resulting graph is harmonic, i.e., the location of each vertex is the averaged location of its neighbors and such that the change in location of each point $x\in\mathbb{Z}^d$ is $o(\|x\|_2)$.

\ifarxiv
\else
Since the proofs are very similar to the ones in \cite{BB06} we only state most of the theorems. A more detailed version of this section (including proofs) can be found in the Arxiv version \cite{RR10}.
\fi

We start with the following observation concerning the Markov chain "on environments".

\begin{lem}
    For every bounded measurable function $f:\Om_0\rightarrow \BR$ and every $x\in\BZ^2$ we have
    \begin{equation}
        \BE_P\left[(f\circ \gth_x)\ind_{\{x\in N_0(\om)\}}\right]=\BE_P[f\ind_{\{-x\in N_0(\om)\}}].
        \label{lem_chain_environments}
    \end{equation}
    As a consequence, $P$ is reversible and, in particular, stationary w.r.t the Markov kernel $\Lambda$ defined in \eqref{Lambda}.
\end{lem}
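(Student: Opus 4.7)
The plan is to rewrite $\ind_{\{x \in N_0(\om)\}}$ in a form that becomes symmetric after the change of variables $\om' = \gth_x\om$, so that the identity reduces to the $\gth_x$-stationarity of $Q$ on the unconditioned space $\Om$. The substance of the proof is therefore a combinatorial symmetry of the coordinate-nearest-neighbor relation.

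Concretely, I will show that on $\Om_0$,
\[
    \ind_{\{x \in N_0(\om)\}} = \ind_{\Om_0}(\gth_x \om)\,\ind_{\{-x \in N_0(\gth_x \om)\}}.
\]
For the forward direction, if $\om \in \Om_0$ and $x = f_e(\om)\, e \in N_0(\om)$, then $\om(x)=1$, so $\gth_x\om \in \Om_0$; moreover $\gth_x\om(-f_e(\om)e) = \om(0)=1$ while $\gth_x\om(-je) = \om((f_e(\om)-j)e) = 0$ for $0 < j < f_e(\om)$, so $f_{-e}(\gth_x\om) = f_e(\om)$ and hence $-x \in N_0(\gth_x\om)$. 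The converse follows by running the same calculation with the roles of $\om$ and $\gth_x\om$ interchanged. Inserting this identity into the definition of $\BE_P$, performing the substitution $\om' = \gth_x \om$ using the $\gth_x$-invariance of $Q$, and noting that $\ind_{\{-x \in N_0(\om')\}}$ already forces $\om'(-x)=1$ (so the surviving factor $\ind_{\Om_0}(\gth_{-x}\om')$ is automatic) yields
\[
    \BE_P[(f \circ \gth_x)\ind_{\{x \in N_0\}}] = \frac{1}{Q(\Om_0)}\int f(\om')\,\ind_{\Om_0}(\om')\,\ind_{\{-x \in N_0(\om')\}}\, dQ(\om') = \BE_P[f \ind_{\{-x \in N_0\}}].
\]

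The stated reversibility consequence then follows by applying \eqref{lem_chain_environments} to $\tilde f(\om'):= f(\om')g(\gth_{-x}\om')$, so that $(\tilde f \circ \gth_x)(\om)=f(\gth_x\om)g(\om)$; this produces
\[
    \BE_P[g(\om)f(\gth_x\om)\ind_{\{x\in N_0\}}] = \BE_P[f(\om)g(\gth_{-x}\om)\ind_{\{-x\in N_0\}}],
\]
and summing over $x \in \BZ^d$ with the relabeling $x \mapsto -x$ on the right gives $\int g\,(\Lambda f)\, dP = \int f\,(\Lambda g)\, dP$. Stationarity is the specialization $g \equiv 1$. The only step requiring real thought is the combinatorial identity, where membership $\om \in \Om_0$ is essential: without $\om(0)=1$ the reciprocal "neighbor from the opposite side" property in $\gth_x\om$ breaks down, as is already visible in elementary one-dimensional examples.
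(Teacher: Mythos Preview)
Your proof is correct and follows essentially the same route as the paper: both reduce the identity to the $\gth_x$-invariance of $Q$ via the combinatorial symmetry $\ind_{\Om_0}\ind_{\{x\in N_0\}} = (\ind_{\Om_0}\ind_{\{-x\in N_0\}})\circ\gth_x$, and both deduce reversibility by summing over $x$. You supply more detail than the paper does in verifying that symmetry (the paper simply asserts it), and your derivation of reversibility via the auxiliary function $\tilde f = f\cdot(g\circ\gth_{-x})$ is a cosmetic variant of the paper's direct expansion of $\BE_P[f\,\Lambda g]$.
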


\begin{proof}
    Multiplying \eqref{lem_chain_environments} by $\BP(\Om_0)$ gives
    \begin{equation}
        \BE_Q[f\circ\gth_x\ind_{\Om_0}\ind_{\{x\in N_0(\om)\}}]=\BE_Q[f\ind_{\Om_0}\ind_{\{-x\in N_0(\om)\}}].
        \label{lem_chain_environments2}
    \end{equation}
    The last equality holds since $\ind_{\{x\in N_0(\om)\}}\ind_{\Om_0}=\left(\ind_{\{-x\in N_0(\om)\}}\ind_{\Om_0}\right)\circ\gth_x$ and therefore $f\circ\gth_x\ind_{\Om_0}\ind_{\{x\in N_0(\om)\}}=\left(f\ind_{\Om_0}\ind_{\{-x\in N_0(\om)\}}\right)\circ\gth_x$. Thus taking expectation w.r.t $Q$ and recalling it is shift invariant gives \eqref{lem_chain_environments2}.

    For a measurable function $f:\Om\rightarrow\BR$ define $\Lambda f:\Om_0\rightarrow\BR$ by
    \begin{equation}
        (\Lambda f)(\om)=\frac{1}{2d}\sum_{x\in\BZ^d}{\left(\ind_{\{x\in N_0(\om)\}}f(\gth_x\om)\right)}. \label{Lambda_function}
    \end{equation}
    Using \eqref{lem_chain_environments} we deduce that for any bounded measurable functions $f,g:\Om\rightarrow\BR$,
    \begin{equation}
        \begin{array}{rcl}
            \BE_P[f\cdot (\Lambda g)]&=&\frac{1}{2d}\sum_{x\in\BZ^d}\BE_P[f\cdot (g\circ\gth_x)\ind_{\{x\in N_0(\om)\}}]\\
            &=&\frac{1}{2d}\sum_{x\in\BZ^d}\BE_P[f\circ \gth_{-x}\ind_{\{-x\in N_0(\om)\}}\cdot g]\\
            &=&\frac{1}{2d}\sum_{-x\in\BZ^d}\BE_P[f\circ \gth_{x}\ind_{\{x\in N_0(\om)\}}\cdot g]=\BE_P[(\Lambda f)\cdot g],
        \end{array}
        \label{inner_product_property}
    \end{equation}
    which is the definition of reversibility. Taking $f=1$ and noticing that $\Lambda f=1$, we get that $\BE_P[\Lambda g]=\BE_P[g]$ for every bounded measurable function $g:\Om\rightarrow\BR$, i.e., $P$ is stationary with respect to the Markov kernel $\Lambda$.
\end{proof}


\subsection{The Kipnis-Varadhan Construction}
$~$\\

We can now adapt the construction of the corrector to the present situation. Let $L^2=L^2(\Om_0,\FB,P)$ be the space of all Borel-measurable square integrable functions on $\Om_0$. We use the notation $L^2$ both for $\BR$-valued functions as well as for $\BR^d$-valued functions. We equip $L^2$ with the inner product $\langle f,g\rangle=\BE_P[fg]$, when for vector valued functions on $\Om$ we interpret $"fg"$ as the scalar product of $f$ and $g$. Let $\Lambda$ be the operator  defined by (\ref{Lambda_function}), and expand the definition to vector valued functions by letting $\Lambda$ act like a scalar, i.e., independently on each  component. From (\ref{inner_product_property}) we get that
\begin{equation}
    \langle f,\Lambda g\rangle=\langle\Lambda f,g\rangle,
\end{equation}
and so $\Lambda$ is self adjoint. In addition, for every $f\in L^2$ we have
\[
    |\langle f,\Lambda f\rangle|\leq\frac{1}{2d}\sum_{x\in\BZ^d}{|\langle f,\ind_{\{x\in N_0(\om)\}}f\circ\gth_x\rangle|}=\frac{1}{2d}\sum_{x\in\BZ^d}{|\langle f\ind_{\{x\in N_0(\om)\}},\ind_{\{x\in N_0(\om)\}}f\circ\gth_x\rangle|}
\]
which by the Cauchy-Schwarz inequality can be bounded by
\begin{equation}
    \begin{array}{cl}
        &\frac{1}{2d}\sum_{x\in\BZ^d}{\langle f\ind_{\{x\in N_0(\om)\}},f\ind_{\{x\in N_0(\om)\}}\rangle^{1/2}\cdot\langle\ind_{\{x\in N_0(\om)\}}f\circ\gth_x,\ind_{\{x\in N_0(\om)\}}f\circ\gth_x\rangle^{1/2}}\\
        &~\\
        =&\frac{1}{2d}\sum_{x\in\BZ^d}{\langle f,f\ind_{\{x\in N_0(\om)\}}\rangle^{1/2}\cdot\langle 1,\ind_{\{x\in N_0(\om)\}}f^2\circ\gth_x\rangle^{1/2}},
    \end{array}
    \nonumber
\end{equation}
and by \eqref{lem_chain_environments} equals
\[
    \frac{1}{2d}\sum_{x\in\BZ^d}{\langle f,f\ind_{\{x\in N_0(\om)\}}\rangle^{1/2}\cdot\langle f,f\ind_{\{-x\in N_0(\om)\}}\rangle^{1/2}}\leq\frac{1}{2d}\sum_{x\in\BZ^d}{\langle f,f\ind_{\{x\in N_0(\om)\}}\rangle}=\langle f,f\rangle.
\]
Thus $~\|\Lambda\|_{L^2}\leq 1$. In particular, $\Lambda$ is self adjoint and $sp(\Lambda)\subseteq [-1,1]$.

Let $V:\Om_0\rightarrow\BR^d$ be the local drift at the origin, i.e.,
\begin{equation}
    V(\om)=\frac{1}{2d}\sum_{x\in\BZ^d}{x\ind_{\{x\in N_0(\om)\}}}. \label{drift_equation}
\end{equation}
If the second moment of $f_e$ exists for every $e\in\CE$, then
\[
    \langle V,V\rangle=\sum_{e\in\CE}{\langle V\cdot e,V\cdot e\rangle}=\frac{1}{2d}\BE_P[(V\cdot e)^2]=\frac{1}{2d}\BE_P[f_e^2+f_{-e}^2]<\infty,
\]
and therefore $V\in L^2$. Thus for each $\ep>0$ we can define $\psi_\ep:\Om_0\rightarrow\BR^d$ as the solution in $L^2$ of
\begin{equation}
    (1+\ep-\Lambda)\psi_\ep=V. \label{defining_psi_ep}
\end{equation}

\begin{rem}
This is well defined since the spectrum of $\Lambda$, denoted by $sp(\Lambda)$, is contained in the interval $[-1,1]$, and therefore $sp(1+\ep+\Lambda)\subset[\ep,2+\ep]$. In particular since $\ep>0$ the operator $1+\ep-\Lambda$ has a bounded inverse.
\end{rem}

The following Theorem is the main result concerning the corrector:

\begin{thm}
    There is a function $\chi:\BZ^d\times\Om_0\rightarrow\BR^d$ such that for every $x\in\BZ^d$,
    \begin{equation}
        \lim_{\ep\downarrow 0}{\ind_{\{x\in\CP(\om)\}}(\psi_\ep\circ\gth_x-\psi_\ep)}=\chi(x,\cdot),\quad \text{in}~L^2.
    \end{equation}
    Moreover, the following properties hold:
    \begin{itemize}

        \item (Shift invariance) For $P$ almost every $\om\in\Om_0$
        \begin{equation}
            \chi(x,\om)-\chi(y,\om)=\chi(x-y,\gth_y(\om)), \label{shift_invariance}
        \end{equation}
        for all $x,y\in\CP(\om)$.

        \item (Harmonicity) For $P$ almost every $\om\in\Om_0$, the function
        \begin{equation}
            x\mapsto\chi(x,\om)+x,
        \end{equation}
        is harmonic with respect to the transition probability given in (\ref{transition_probability})

        \item (Square integrability) There exists a constant $C<\infty$ such that
        \begin{equation}
            \|[\chi(x+y,\cdot)-\chi(x,\cdot)]\ind_{\{x\in\CP(\om)\}}(\ind_{\{y\in N_0(\om)\}}\circ\gth_x)\|_2<C, \label{square_integrability}
        \end{equation}
        for all $x,y\in\BZ^d$.
    \end{itemize}
    \label{corrector_thm}
\end{thm}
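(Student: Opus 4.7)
The construction follows the Kipnis--Varadhan method in the form developed in \cite{BB06} for percolation. The first ingredient is a Dirichlet identity: for any $f\in L^2$,
\[
    \frac{1}{2d}\sum_{x\in\BZ^d}\BE_P\!\left[(f\circ\gth_x-f)^2\ind_{\{x\in N_0(\om)\}}\right]=2\langle f,(I-\Lambda)f\rangle,
\]
obtained by expanding the square and using the reversibility identity \eqref{lem_chain_environments} to symmetrize the cross term. Specializing to $f=\psi_\ep$ and using $\langle\psi_\ep,(I-\Lambda)\psi_\ep\rangle\leq\langle\psi_\ep,(1+\ep-\Lambda)\psi_\ep\rangle=\langle\psi_\ep,V\rangle$ yields the master estimate
\[
    \sum_{x\in\BZ^d}\BE_P\!\left[(\psi_\ep\circ\gth_x-\psi_\ep)^2\ind_{\{x\in N_0(\om)\}}\right]\leq 4d\,\langle\psi_\ep,V\rangle.
\]

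The crux is to bound $\langle\psi_\ep,V\rangle$ uniformly in $\ep$. Writing it spectrally, $\langle\psi_\ep,V\rangle=\int_{-1}^{1}(1+\ep-\lambda)^{-1}d\mu_V(\lambda)$, where $\mu_V$ is the spectral measure of $V$ with respect to the bounded self-adjoint operator $\Lambda$; this is monotone as $\ep\downarrow0$, and bounded precisely when $\int(1-\lambda)^{-1}d\mu_V<\infty$. This latter condition I would verify exactly as in \cite{BB06}: $V$ is a local drift with zero mean, by the symmetry $\BE_P[f_e]=\BE_P[f_{-e}]$ already used in the proof of Theorem \ref{LLN}, and belongs to $L^2$ by Assumption \ref{assumption3}. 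Applying the same Dirichlet identity to the difference $\psi_{\ep_1}-\psi_{\ep_2}$ then furnishes a Cauchy-in-$L^2$ estimate for the family $\{\ind_{\{x\in N_0\}}(\psi_\ep\circ\gth_x-\psi_\ep)\}_{\ep>0}$ via dominated convergence on the spectral side, and I would define $\chi(x,\cdot)\ind_{\{x\in N_0\}}$ as the resulting $L^2$ limit.

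To extend $\chi(x,\cdot)$ to all $x\in\BZ^d$, I use the pre-limit cocycle identity $\psi_\ep\circ\gth_x-\psi_\ep\circ\gth_y=(\psi_\ep\circ\gth_{x-y}-\psi_\ep)\circ\gth_y$, which passes to the limit to give the shift-invariance \eqref{shift_invariance} for coordinate nearest-neighbor pairs, and telescope along a finite path of coordinate nearest neighbors inside $\CP(\om)$ (whose existence is given by Claim \ref{The_model_claim}) to cover arbitrary $x\in\CP(\om)$; the cocycle property itself guarantees path-independence. Harmonicity comes from averaging $\Lambda\psi_\ep-\psi_\ep=\ep\psi_\ep-V$ over $N_0(\om)$: the bound $\ep\|\psi_\ep\|^2\leq\langle\psi_\ep,V\rangle$ forces $\ep\psi_\ep\to0$ in $L^2$, so the limit becomes $\frac{1}{2d}\sum_{x\in N_0(\om)}\chi(x,\om)=-V(\om)=-\frac{1}{2d}\sum_{x\in N_0(\om)}x$, which is exactly the harmonicity of $x\mapsto x+\chi(x,\om)$ at the origin; the general base-point case is immediate from shift-invariance.

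Finally, \eqref{square_integrability} follows from the uniform estimate above: by shift-invariance the increment $\chi(x+y,\om)-\chi(x,\om)$ on $\{x\in\CP(\om),\,y\in N_0(\gth_x\om)\}$ equals $\chi(y,\gth_x\om)$, and a translation-invariance argument using $P=Q(\cdot\mid\Om_0)$ reduces its $L^2$ bound back to the uniform nearest-neighbor estimate at the origin. The main obstacle throughout is the uniform spectral bound $\int(1-\lambda)^{-1}d\mu_V<\infty$; without Assumption \ref{assumption3} one obtains only a corrector in a weaker sense, not suitable for the CLT in Section \ref{sec:high_dim_CLT}.
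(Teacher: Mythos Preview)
Your proposal is correct and follows essentially the same Kipnis--Varadhan route as the paper (which in turn adapts \cite{BB06}): the Dirichlet identity, the spectral bound $\int(1-\lambda)^{-1}d\mu_V<\infty$, the Cauchy estimate for $G_x^{(\ep)}$, the path-telescoping definition of $\chi$, harmonicity from $\ep\psi_\ep\to 0$, and square integrability from the uniform nearest-neighbor bound are precisely the steps carried out there. One small caution: the spectral bound does not follow merely from $V\in L^2$ with zero mean but from the specific antisymmetry $x\,\BE_P[f\,\ind_{\{x\in N_0\}}]=\tfrac12 x\,\BE_P[(f-f\circ\gth_x)\ind_{\{x\in N_0\}}]$, which is what yields $|\langle f,a\cdot V\rangle|\leq C|a|\,\langle f,(1-\Lambda)f\rangle^{1/2}$; since you defer to \cite{BB06} at that point, this is fine.
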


\ifarxiv
    The rest of this section deals with proving Theorem \ref{corrector_thm}. The proof is based on spectral calculus and closely follows the corresponding arguments from \cite{BB06} and \cite{KV86}.
\else
\fi

\ifarxiv
\else

The proof of Theorem \ref{corrector_thm} follows the same lines as the one in \cite{BB06} without any major changes, and therefore we omit it. The following Lemma summarizes few of the intermediate steps in the proof of Theorem \ref{corrector_thm} which will be needed in order to prove the high dimensional CLT.

\begin{lem}
    Let $\psi_\ep$ be defined as in \eqref{defining_psi_ep}, i.e., the solution of $(1+\ep-\Lambda)\psi_\ep=V$. Then
    \begin{equation}
        \lim_{\ep\downarrow 0}{\ep\|\psi_\ep\|_2^2}=0. \label{psi_squared_limit}
    \end{equation}
    For every $x\in\BZ^d$ define
    \begin{equation}
        G_x^{(\ep)}(\om)=\ind_{\Om_0}(\om)\cdot\ind_{\{x\in N_0(\om)\}}(\om)\cdot (\psi_\ep\circ\gth_x(\om)-\psi_\ep(\om)).\label{G_definition}
    \end{equation}
    Then
    \begin{equation}
        \lim_{\ep_1,\ep_2\downarrow 0}{\|G_x^{(\ep_1)}\circ\gth_y-G_x^{(\ep_2)}\circ\gth_y\|_2}=0,\quad \forall x,y\in \BZ^d.\label{lim_of_G_exist}
    \end{equation}
    The corrector is now defined by
    \begin{equation}
        \chi(x,\om)\overset{def}{=}\sum_{k=0}^{n-1}{G_{x_k,x_{k+1}}(\om)},\label{chi_definition}
    \end{equation}
    where $(x_0,x_1,\ldots,x_n)$ is any "coordinate nearest neighbor" path in $\CP(\om)$ from $0$ to $x$ and $G_{x,y}(\om)=\lim_{\ep\downarrow 0}G_x^{(\ep)}\circ \theta_y(\om)$ in the $L^2$ sense.
\end{lem}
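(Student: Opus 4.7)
The plan is to apply spectral calculus to the self-adjoint operator $\Lambda$ on $L^2(\Om_0, P)$. First, $V \in L^2$ by assumption \ref{assumption3}, and $\BE_P[V] = 0$ follows from the symmetry $\BE_P[f_e] = \BE_P[f_{-e}]$ in \eqref{Expectation_equality}. Combined with the ergodicity of the environment-viewed Markov chain (Proposition \ref{ergodic_particle_view}), which forces $\ker(1-\Lambda)$ to consist only of constants, this yields $V \perp \ker(1-\Lambda)$, so the spectral measure $\mu_V$ of $V$ with respect to $\Lambda$ satisfies $\mu_V(\{1\}) = 0$. The main analytic input, and the step I expect to be the most delicate, is the $H_{-1}$ bound $\int_{-1}^{1} (1-\lambda)^{-1}\, d\mu_V(\lambda) < \infty$, which I would establish by adapting the variational arguments of \cite{BB06} to the present setting, using the moment condition in assumption \ref{assumption3} together with the heat-kernel estimates behind Theorem \ref{asymptotic_X_n}.

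For \eqref{psi_squared_limit}, spectral calculus gives $\ep\|\psi_\ep\|_2^2 = \int \frac{\ep}{(1+\ep-\lambda)^2}\, d\mu_V(\lambda)$. I would dominate the integrand by $\frac{1}{1-\lambda}$, which lies in $L^1(\mu_V)$ by the $H_{-1}$ bound, and observe that it vanishes pointwise on $[-1,1)$ as $\ep \downarrow 0$; since $\mu_V(\{1\}) = 0$, dominated convergence then yields the limit. For \eqref{lim_of_G_exist}, I would first treat the case $y = 0$: summing over $x$ and using the reversibility of $\Lambda$ gives
\begin{align*}
    \sum_{x\in\BZ^d}\|G_x^{(\ep_1)}-G_x^{(\ep_2)}\|_2^2 &= 4d\,\langle \psi_{\ep_1}-\psi_{\ep_2},(1-\Lambda)(\psi_{\ep_1}-\psi_{\ep_2})\rangle \\
    &= 4d\int (1-\lambda)\left(\frac{1}{1+\ep_1-\lambda}-\frac{1}{1+\ep_2-\lambda}\right)^2 d\mu_V(\lambda),
\end{align*}
which tends to $0$ as $\ep_1,\ep_2\downarrow 0$ by dominated convergence with majorant $\frac{4}{1-\lambda}$; non-negativity of the summands then forces each individual summand to vanish. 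For general $y$, I would rewrite the $L^2(P)$ norm in terms of $Q$ via $P = Q(\cdot|\Om_0)$ and use the $\gth_y$-invariance of $Q$ to relate $G_x^{(\ep)}\circ\gth_y$ back to $G_x^{(\ep)}$ (with the event $\{y\in\CP(\om)\}$ replacing $\{0\in\CP(\om)\}$), reducing the claim to the $y=0$ estimate.

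Finally, for the definition \eqref{chi_definition}, I would observe that for each $\ep > 0$ and any coordinate nearest-neighbor path $(x_0 = 0, x_1, \ldots, x_n = x)$ in $\CP(\om)$ starting at the origin, the telescoping identity
\begin{equation*}
    \sum_{k=0}^{n-1} G^{(\ep)}_{x_{k+1}-x_k}\circ\gth_{x_k}(\om) = \psi_\ep\circ\gth_x(\om) - \psi_\ep(\om)
\end{equation*}
holds, so the pre-limit path-sum is path-independent for every fixed $\ep$. Passing $\ep\downarrow 0$ termwise using \eqref{lim_of_G_exist}, the finite sum of $L^2$-convergent terms converges in $L^2(P)$ to a limit $\chi(x,\om)$ that inherits path-independence from the pre-limit identity. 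The real work of the lemma thus lies almost entirely in establishing the $H_{-1}$ bound on $V$; once that is available, the rest is a direct exercise in the spectral theorem.
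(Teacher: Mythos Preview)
Your approach is essentially identical to the paper's: spectral calculus for $\Lambda$, the $H_{-1}$ bound $\int (1-\lambda)^{-1}\,d\mu_V<\infty$ as the key input, dominated convergence with majorant $(1-\lambda)^{-1}$ for both \eqref{psi_squared_limit} and the Cauchy property of $G_x^{(\ep)}$, the reduction of $\sum_x\|G_x^{(\ep_1)}-G_x^{(\ep_2)}\|_2^2$ to $\langle\Psi,(1-\Lambda)\Psi\rangle$ with $\Psi=\psi_{\ep_1}-\psi_{\ep_2}$, and the telescoping/gradient-field argument for path-independence of \eqref{chi_definition}. Your treatment of general $y$ via $Q$-invariance is a cosmetic variant of the paper's (which drops the extra $\ind_{\Om_0}$ and uses $Q$-invariance implicitly); either way one gets the inequality $\|G_x^{(\ep_1)}\circ\gth_y-G_x^{(\ep_2)}\circ\gth_y\|_2\le\|G_x^{(\ep_1)}-G_x^{(\ep_2)}\|_2$, which suffices.

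One correction worth noting: the $H_{-1}$ bound does \emph{not} require the heat-kernel estimates from Section~\ref{sec:asymptotice_behavior}. The paper proves it (in the detailed version, following \cite{BB06} Lemma~2.3) using only the second-moment condition on $f_e$: a symmetrization via \eqref{lem_chain_environments} gives $|\langle f,a\cdot V\rangle|\le C|a|\,\langle f,(1-\Lambda)f\rangle^{1/2}$, and plugging in $f=a\cdot\psi(\Lambda)V$ with $\psi(\lambda)=\min\{\ep^{-1},(1-\lambda)^{-1}\}$ then yields the bound by monotone convergence. Invoking heat-kernel machinery here would be circular in spirit, since Section~\ref{sec:asymptotice_behavior} already uses the full strength of assumption~\ref{assumption3}, whereas the $H_{-1}$ bound needs only finite second moments. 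Also, your separate verification that $\mu_V(\{1\})=0$ is harmless but redundant: it is automatic once $\int(1-\lambda)^{-1}\,d\mu_V<\infty$.
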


\begin{rem}
    The fact that all the limits in the above lemma exist and that the corrector is well defined are all part of the proof of Theorem \ref{corrector_thm}.
\end{rem}

\fi


\ifarxiv
\subsection{Spectral calculation}
$~$\\

Let $\mu_{\Lambda,V}=\mu_V$ denote the spectral measure of $\Lambda:L^2\rightarrow L^2$ associated with the function V. i.e., for every bounded, continuous function $\Phi:[-1,1]\rightarrow\BR$, we have
\begin{equation}
    \left\langle V,\Phi(\Lambda)V\right\rangle=\int_{-1}^{1}{\Phi(\lambda)\mu_V(d\lambda)}.
\end{equation}
Since $\Lambda$ acts as a scalar, $\mu_V$ is the sum of the "usual" spectral measures for the Cartesian components of V. In the integral, we used the fact that $sp(\Lambda)\subset[-1,1]$, and therefore the measure $\mu_V$ is supported entirely on $[-1,1]$. The first observation, made already by Kipnis and Varadhan, is stated as follows:

\begin{lem}[\cite{BB06} Lemma 2.3]
    Assume that the second moments of $\{f_{\pm e_i}\}_{i=1}^{d}$ are finite, then
    \begin{equation}
        \int_{-1}^{1}{\frac{1}{1-\lambda}\mu_V(d\lambda)}<\infty. \label{spectral_measure}
    \end{equation}
\end{lem}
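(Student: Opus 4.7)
The plan is to reformulate the spectral integral as a uniform bound on $\langle V,\psi_\ep\rangle$ and then exploit the coordinate structure of $V$ to reduce matters to a single Cauchy-Schwarz estimate. By the spectral theorem and monotone convergence,
\[
\int_{-1}^{1}\frac{d\mu_V(\lambda)}{1-\lambda}=\lim_{\ep\downarrow 0}\int_{-1}^{1}\frac{d\mu_V(\lambda)}{1-\lambda+\ep}=\lim_{\ep\downarrow 0}\langle V,\psi_\ep\rangle,
\]
so it suffices to bound $\langle V,\psi_\ep\rangle$ uniformly in $\ep>0$. Reversibility of $\Lambda$ (which follows from the $\sigma_e$-invariance of $P$ together with the cocycle relation $\sigma_{-e}=\sigma_e^{-1}$) produces the Dirichlet form
\[
\mathsf{D}(u,u):=\langle u,(I-\Lambda)u\rangle=\frac{1}{4d}\sum_{e\in\CE}\BE_P\bigl[(u\circ\sigma_e-u)^2\bigr],
\]
and the standard variational principle for positive quadratic forms gives, for each $1\leq i\leq d$,
\[
\langle V\cdot e_i,\psi_\ep^{(i)}\rangle=\sup_{u\in L^2}\bigl[2\langle V\cdot e_i,u\rangle-\mathsf{D}(u,u)-\ep\|u\|_2^2\bigr],
\]
where $\psi_\ep^{(i)}:=\psi_\ep\cdot e_i$ is the scalar $i$th component of $\psi_\ep$. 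Since $\mu_V=\sum_i\mu_{V\cdot e_i}$, it is enough to bound each component integral separately.

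The heart of the argument is a short integration-by-parts. Writing $V\cdot e_i=\tfrac{1}{2d}(f_{e_i}-f_{-e_i})$ and using both the $\sigma_{-e_i}$-invariance of $P$ and the identity $f_{e_i}\circ\sigma_{-e_i}=f_{-e_i}$ (i.e., after shifting by the coordinate nearest neighbor in direction $-e_i$, the new coordinate nearest neighbor in direction $+e_i$ is the original origin at distance $f_{-e_i}(\om)$), one obtains $\BE_P[f_{e_i}u]=\BE_P[f_{-e_i}\,u\circ\sigma_{-e_i}]$ for every $u\in L^2$, and hence
\[
2d\,\langle V\cdot e_i,u\rangle=\BE_P\bigl[f_{-e_i}\,(u\circ\sigma_{-e_i}-u)\bigr].
\]
Cauchy-Schwarz together with the trivial bound $\BE_P[(u\circ\sigma_{-e_i}-u)^2]\leq 4d\,\mathsf{D}(u,u)$ then yields $|\langle V\cdot e_i,u\rangle|\leq C_i\,\mathsf{D}(u,u)^{1/2}$ with $C_i^2=\BE_P[f_{-e_i}^2]/d$, finite by the hypothesised second-moment condition on $f_{\pm e_i}$.

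Substituting this bound into the variational formula and maximising the scalar inequality $2C_i t-t^2\leq C_i^2$ in $t=\mathsf{D}(u,u)^{1/2}\geq 0$ yields $\langle V\cdot e_i,\psi_\ep^{(i)}\rangle\leq C_i^2$ uniformly in $\ep>0$. Summing over $i$ and letting $\ep\downarrow 0$ proves $\int d\mu_V/(1-\lambda)\leq\sum_{i=1}^d C_i^2<\infty$. The only genuinely delicate point is verifying the cocycle identity $f_{e_i}\circ\sigma_{-e_i}=f_{-e_i}$ directly from the definitions of the induced shift $\sigma_e$ and of the nearest-neighbor distance $f_e$; once this is in hand, the remaining spectral and Cauchy-Schwarz manipulations are automatic.
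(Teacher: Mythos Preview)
Your argument is correct and follows essentially the same Kipnis--Varadhan route as the paper. Both proofs hinge on the same core estimate
\[
|\langle V\cdot e_i,\,u\rangle|\;\le\;C_i\,\langle u,(1-\Lambda)u\rangle^{1/2},
\]
obtained by a symmetrisation/integration-by-parts followed by Cauchy--Schwarz: the paper's identity $\sum_x x\,\BE_P[f\,\ind_{\{x\in N_0\}}]=\tfrac12\sum_x x\,\BE_P[(f-f\circ\gth_x)\ind_{\{x\in N_0\}}]$ is precisely your cocycle computation $\BE_P[f_{e_i}u]=\BE_P[f_{-e_i}\,u\circ\sigma_{-e_i}]$ written in the $\gth_x$ rather than $\sigma_e$ notation. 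The only difference is the final packaging: the paper substitutes the test function $u=\psi(\Lambda)V$ with $\psi(\lambda)=\min\{1/\ep,1/(1-\lambda)\}$ and manipulates the resulting spectral integrals directly, whereas you invoke the variational characterisation $\langle V\cdot e_i,(1+\ep-\Lambda)^{-1}(V\cdot e_i)\rangle=\sup_u\bigl[2\langle V\cdot e_i,u\rangle-\mathsf{D}(u,u)-\ep\|u\|^2\bigr]$ and optimise. Both conclusions are standard and equivalent; your version is arguably a touch cleaner since it avoids tracking the truncated functional calculus.
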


\begin{proof}
    The proof follows the proof of Lemma 2.3 in \cite{BB06}. Let $f\in L^2$ be a bounded real-valued function. Using \eqref{lem_chain_environments} we get
    \begin{equation}
        \sum_{x\in\BZ^d}{x\BE_P\left[f\ind_{\{x\in N_0(\om)\}}\right]=\frac{1}{2}\sum_{x\in\BZ^d}x\BE_P\left[(f-f\circ\gth_x)\ind_{\{x\in N_0(\om)\}}\right]}. \label{symmetry_for_lem}
    \end{equation}
    Hence, for every $a\in\BZ^d$ we get
    \begin{equation}
    \begin{aligned}
        \langle f,a\cdot V\rangle& =  \frac{1}{2d}\sum_{x\in\BZ^d}{x\cdot a}\BE_P\left[f\ind_{\{x\in N_0(\om)\}}\right]\nonumber\\
        & =  \frac{1}{2}\frac{1}{2d}\sum_{x\in\BZ^d}{x\cdot a}\BE_P\left[(f-f\circ\gth_x)\ind_{\{x\in N_0(\om)\}}\right]\nonumber\\
        & \leq  \frac{1}{2}\left(\frac{1}{2d}\sum_{x\in\BZ^d}{(x\cdot a)^2 P(x\in N_0(\om))}\right)^{1/2} \nonumber\\
        & \cdot  \left(\frac{1}{2d}\sum_{x\in\BZ^d}{\BE_P\left[(f-f\circ\gth_x)^2\ind_{\{x\in N_0(\om)\}}\right]}\right)^{1/2}. \nonumber\\
    \end{aligned}
    \end{equation}
    where for the second equality we used \eqref{symmetry_for_lem}, and for the last step the Cauchy-Schwartz inequality. Using the assumption that the second moments of $f_e$ exist for every $e\in\CE$, the first term on the right hand side is less than a finite constant times $|a|$. On the other hand,  the second term, using \eqref{lem_chain_environments}, can be written as follows:
    \begin{equation}
    \begin{aligned}
        \frac{1}{2d}\sum_{x\in\BZ^d}{\BE_P\left[(f-f\circ\gth_x)^2\ind_{\{x\in
        N_0(\om)\}}\right]}
        &=2\frac{1}{2d}\sum_{x\in\BZ^d}{\BE_P\left[f(f-f\circ\gth_x)\ind_{\{x\in
        N_0(\om)\}}\right]} \nonumber\\
        &= 2\left\langle f,(1-\Lambda)f\right\rangle.\nonumber
    \end{aligned}
    \end{equation}
    We thus get, using assumption \ref{assumption3}, that there exists a constant $C_0<\infty$ such that for all bounded $f\in L^2$,
    \begin{equation}
        |\langle f,a\cdot V\rangle|\leq C_0|a|\left\langle f,(1-\Lambda)f\right\rangle^{1/2}. \label{bound_for_(f,aV)}
    \end{equation}
    Applying \eqref{bound_for_(f,aV)} for $f$ of the form $f=a\cdot\psi(\Lambda)V$,  where $a\in\BR^d$, and $\Psi:[-1,1]\rightarrow\BR$ is a bounded continuous function, summing over coordinate vectors in $\BR^d$ and invoking \eqref{spectral_measure}, we get
    \begin{equation}
    \begin{aligned}
        \left|\int_{-1}^{1}{\psi(\lambda)\mu_V(d\lambda)}\right|
        &=  \left|\sum_{i=1}^{d}{\left\langle V\cdot e_i,\psi(\Lambda)V\cdot e_i\right\rangle}\right| \nonumber\\
        &\leq  \sum_{i=1}^{d}{\left|\left\langle V\cdot e_i,\psi(\Lambda)V\cdot e_i\right\rangle\right|} \nonumber\\
        &\leq  C_0\sum_{i=1}^{d}{\left\langle V\cdot e_i,\psi(\Lambda)^2(1-\Lambda)V\cdot e_i\right\rangle^{1/2}} \nonumber\\
        &\leq  C_0\sqrt{d}\left(\sum_{i=1}^{d}{\left\langle V\cdot e_i,\psi(\Lambda)^2(1-\Lambda)V\cdot e_i\right\rangle}\right)^{1/2}  \nonumber\\
        &= C_0\sqrt{d}\left(\int_{-1}^{1}{\psi(\lambda)^2(1-\lambda)\mu_V(d\lambda)}\right)^{1/2}.
        \nonumber
    \end{aligned}
    \end{equation}
    Substituting $\psi_\ep(\lambda)=\min{\{\frac{1}{\ep},\frac{1}{1-\lambda}\}}$ for $\psi$ and noting that $(1-\lambda)\psi_\ep(\lambda)\leq 1$, we get
    \begin{equation}
        \int_{-1}^{1}{\psi_\ep(\lambda)\mu_V(d\lambda)}\leq C_0\sqrt{d}\left(\int_{-1}^{1}{\psi_\ep(\lambda)\mu_V(d\lambda)}\right)^{1/2},
    \end{equation}
    and therefore
    \begin{equation}
        \int_{-1}^{1}{\psi_\ep(\lambda)\mu_V(d\lambda)}\leq dC_0^2.
    \end{equation}
    Now, the Monotone Convergence Theorem implies
    \begin{equation}
        \int_{-1}^{1}{\frac{1}{1-\lambda}\mu_V(d\lambda)}=\lim_{\ep\downarrow 0} {\int_{-1}^{1}{\psi_\ep(\lambda)\mu_V(d\lambda)}}=\sup_{\ep>0}{\int_{-1}^{1}{\psi_\ep(\lambda)\mu_V(d\lambda)}}\leq
        dC_0^2<\infty,
    \end{equation}
    proving the desired claim.
\end{proof}

We now turn to prove another estimate on $\psi_\ep$, also taken from \cite{BB06}:

\begin{lem}[\cite{BB06} Lemma 2.4]\label{lemma:psi_squared_limit}
    Let $\psi_\ep$ be defined as in \eqref{defining_psi_ep}, i.e., the solution of $(1+\ep-\Lambda)\psi_\ep=V$. Then
    \begin{equation}
        \lim_{\ep\downarrow 0}{\ep\|\psi_\ep\|_2^2}=0. \label{psi_squared_limit}
    \end{equation}
    In addition, for every $x\in\BZ^d$ let
    \begin{equation}
        G_x^{(\ep)}(\om)=\ind_{\Om_0}(\om)\cdot\ind_{\{x\in N_0(\om)\}}(\om)\cdot (\psi_\ep\circ\gth_x(\om)-\psi_\ep(\om)). \label{G_definition}
    \end{equation}
    Then for all $x,y\in \BZ^d$,
    \begin{equation}
        \lim_{\ep_1,\ep_2\downarrow 0}{\|G_x^{(\ep_1)}\circ\gth_y-G_x^{(\ep_2)}\circ\gth_y\|_2}=0. \label{lim_of_G_exist}
    \end{equation}
\end{lem}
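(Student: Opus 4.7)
The plan is to use spectral calculus with respect to the spectral measure $\mu_V$ from Lemma 2.3, combined with the dominated convergence theorem. Both statements can be reduced to integral estimates against $\mu_V$, for which Lemma 2.3 supplies the needed integrability.

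For the first claim, I would write $\psi_\ep = (1+\ep-\Lambda)^{-1} V$ and apply the functional calculus to get
\[
\ep\,\|\psi_\ep\|_2^2 = \int_{-1}^{1} \frac{\ep}{(1+\ep-\lambda)^2}\, \mu_V(d\lambda).
\]
The integrand tends pointwise to $0$ on $[-1,1)$ as $\ep\downarrow 0$, and (setting $u = 1-\lambda\ge 0$) satisfies $\frac{\ep}{(\ep+u)^2}\le \frac{1}{u}$ since $\ep u \le (\ep+u)^2$. Thus the integrand is dominated by $\frac{1}{1-\lambda}$, which is $\mu_V$-integrable by Lemma 2.3 (note that Lemma 2.3 also forces $\mu_V(\{1\})=0$), so the dominated convergence theorem gives \eqref{psi_squared_limit}.

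For the second claim, I would first treat the case $y=0$. Set $h = \psi_{\ep_1}-\psi_{\ep_2}$. Applying the identity derived in the proof of Lemma 2.3, namely $\frac{1}{2d}\sum_{x}\BE_P[(h-h\circ\gth_x)^2\ind_{\{x\in N_0\}}]=2\langle h,(1-\Lambda)h\rangle$, the single-$x$ term is bounded by $4d\langle h,(1-\Lambda)h\rangle$, so
\[
\|G_x^{(\ep_1)}-G_x^{(\ep_2)}\|_2^2 \;\le\; 4d\int_{-1}^{1} \!\left(\tfrac{1}{1+\ep_1-\lambda}-\tfrac{1}{1+\ep_2-\lambda}\right)^{\!2}\!(1-\lambda)\,\mu_V(d\lambda).
\]
The integrand is bounded by $\bigl(\tfrac{2}{1-\lambda}\bigr)^2(1-\lambda)=\tfrac{4}{1-\lambda}$, again integrable by Lemma 2.3, and it vanishes pointwise as $\ep_1,\ep_2\downarrow 0$ on $[-1,1)$. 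Dominated convergence closes the case $y=0$.

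For general $y$, I would reduce to the above by $Q$-shift invariance: since $G_x^{(\ep)}(\om)$ already contains the indicator $\ind_{\Om_0}(\om)$, one has
\[
\|(G_x^{(\ep_1)}-G_x^{(\ep_2)})\circ\gth_y\|_2^2 = \tfrac{1}{Q(\Om_0)}\BE_Q\bigl[\ind_{\Om_0}\cdot (G_x^{(\ep_1)}-G_x^{(\ep_2)})^2\circ\gth_y\bigr],
\]
and applying $\gth_{-y}$ under $\BE_Q$ (using $Q$-stationarity) introduces the extra indicator $\ind_{\{-y\in\CP(\om)\}}\le 1$, yielding $\|(G_x^{(\ep_1)}-G_x^{(\ep_2)})\circ\gth_y\|_2 \le \|G_x^{(\ep_1)}-G_x^{(\ep_2)}\|_2$, and the $y=0$ case finishes the proof. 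The main subtlety, which is the only non-routine step, is to justify the reduction to $y=0$ correctly: $P$ is not itself shift invariant, so one must pass to $Q$ and use that $G_x^{(\ep)}$ already carries its own $\ind_{\Om_0}$ factor; everything else is spectral calculus plus dominated convergence anchored in Lemma 2.3.
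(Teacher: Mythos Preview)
Your proposal is correct and follows essentially the same route as the paper: spectral calculus against $\mu_V$ together with the dominated convergence theorem, using the $\frac{1}{1-\lambda}$ integrability from Lemma 2.3, and a reduction of the general-$y$ case to $y=0$. The only cosmetic differences are that the paper averages over $x\in N_0(\om)$ to obtain $2\langle\Psi,(1-\Lambda)\Psi\rangle$ exactly (whereas you bound the single-$x$ term by the sum), and the paper argues the $y$-reduction as an equality via an informal $P$-invariance remark, while your passage through $Q$ and the built-in $\ind_{\Om_0}$ factor, yielding an inequality, is in fact the cleaner justification.
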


\begin{proof}
    From the definition of $\psi_\ep$ we have,
    \begin{equation}
        \ep\|\psi_\ep\|_2^2=\int_{-1}^{1}{\frac{\ep}{(1+\ep-\lambda)^2}\mu_V(d\lambda)}.
    \end{equation}
    The integrand is dominated by $\frac{1}{1-\lambda}$ and in addition tends to zero as $\ep\downarrow 0$ in the support of $\mu_V$. Then \eqref{psi_squared_limit} follows by the Dominated Convergence Theorem. The second part of the claim is proved similarly. First we get rid of the $y$-dependence by noting the following. Due to the fact that $G_x^{\ep}\circ\gth^y\neq 0$ ensures that $y\in\CP(\om)$, and since $P$ is invariant under translation of the form $\gth^z~~z\in\CP(\om)$ we get that:
    \begin{equation}
        \|G_x^{(\ep_1)}\circ\gth_y-G_x^{(\ep_2)}\circ\gth_y\|_2=\|G_x^{(\ep_1)}-G_x^{(\ep_2)}\|_2. \label{norm_equality_G}
    \end{equation}
    Therefore, averaging the square of \eqref{norm_equality_G} over $x\in N_0(\om)$ we find that
    \begin{equation}
    \begin{aligned}
        \frac{1}{2d}\sum_{x\in N_0(\om)}{\|G_x^{(\ep_1)}\circ\gth_y-G_x^{(\ep_2)}\circ\gth_y\|_2^2}
        & = \frac{1}{2d}\sum_{x\in N_0(\om)}{\|G_x^{(\ep_1)}-G_x^{(\ep_2)}\|_2^2}\nonumber\\
        &=\frac{1}{2d}\sum_{x\in N_0(\om)}{\BE_P\Big[(G_x^{(\ep_1)}-G_x^{(\ep_2)})^2\Big]}  \\
        &= \frac{1}{2d}\sum_{x\in \BZ^d}{\BE_P\Big[\ind_{\Om_0}\ind_{\{x\in N_0{(\om)}\}}(\Psi\circ\gth_x-\Psi)^2\Big]},
    \end{aligned}
    \end{equation}
    where $\Psi=\psi_{\ep_1}-\psi_{\ep_2}$. Expanding the last expression we see that it equals
    \begin{equation}
        \frac{1}{2d}\sum_{x\in \BZ^d}{\BE_P\left[\ind_{\Om_0}\ind_{\{x\in N_0{(\om)}\}}(\Psi^2\circ\gth_x+\Psi^2-2\Psi\cdot\Psi\circ\gth_x)\right]}.
    \end{equation}
    Since $P$ is stationary under translation $\gth_x$ when $x\in N_0(\om)$, we get that it can be written as
    \begin{equation}
        2\langle\Psi,\Psi\rangle-2\left\langle\Psi,\frac{1}{2d}\sum_{x\in\BZ^d}{\BE_P\left[\ind_{\Om_0}\ind_{\{x\in N_0{(\om)}\}}\Psi\circ\gth_x\right]}\right\rangle=2\langle\Psi,(1-\Lambda)\Psi\rangle.
    \end{equation}
    Finally we evaluate $\langle\Psi,(1-\Lambda)\Psi\rangle$:
    \begin{equation}
    \begin{aligned}
        \left\langle\psi_{\ep_1}-\psi_{\ep_2},(1-\Lambda)(\psi_{\ep_1}-\psi_{\ep_2})\right\rangle &
        =\int_{-1}^{1}{\left(\frac{1}{(1+\ep_1-\lambda)^2}-\frac{1}{(1+\ep_2-\lambda)^2}\right)(1-\lambda)\mu_v(d\lambda)}\nonumber\\
        &=\int_{-1}^{1}{\frac{(\ep_1-\ep_2)^2(1-\lambda)}{(1+\ep_1-\lambda)^2(1+\ep_2-\lambda)^2}\mu_V(d\lambda)}.\nonumber
    \end{aligned}
    \end{equation}
    The integrand here is again bounded by $\frac{1}{1-\lambda}$ for all $\ep_1,\ep_2>0$, and it tends to zero as $\ep_1,\ep_2\downarrow 0$. The claim now follows by the Dominated Convergence Theorem.
\end{proof}

We now turn to prove Theorem \ref{corrector_thm}.

\begin{proof}[Proof of Theorem \ref{corrector_thm}]
    We closely follow the proof of Theorem 2.2 in \cite{BB06}. Let $G^{\ep}_x\circ\gth_y$ be as in \eqref{G_definition}. Using \eqref{lim_of_G_exist} we know that $G^{\ep}_x\circ\gth_y$ converges in $L^2$ as $\ep\downarrow 0$. We denote the limit by $G_{y,y+x}=\lim_{\ep\downarrow 0}{G_x^\ep\circ\gth_y}$. Since $G^{\ep}_x\circ\gth_y$ is a gradient field on $\CP(\om)$, we have $G_{y,y+x}(\om)+G_{y+x,y}(\om)=0$ and, more generally $\sum_{k=0}^{n-1}{G_{x_k,x_{k+1}}}=0$ whenever $(x_0,x_1,\ldots,x_n)$ is a closed loop in $\CP(\om)$. Thus we may define
    \begin{equation}
        \chi(x,\om):=\sum_{k=0}^{n-1}{G_{x_k,x_{k+1}}(\om)}, \label{chi_definition}
    \end{equation}
    where $(x_0,x_1,\ldots,x_n)$ is a "nearest neighbor" (in the sense that $x_i\in N_{x_{i-1}}(\om),~\forall 1\leq i\leq n$) path on $\CP(\om)$ connecting $x_0=0$ to $x_n=x$. By the above "loop" conditions, the definition is independent of this path for almost every $\om\in\Om_0\cap\{\om:x\in\CP(\om)\}$. The shift invariance \eqref{shift_invariance} will now follow from the definition of $\chi$ and the fact that $G_{x,x+y}=G_{0,y}\circ\gth_x$. In light of the shift invariance, to prove the harmonicity of $x\mapsto x+\chi(x,\om)$ it is sufficient to show that, $P$ almost surely,
    \begin{equation}
        \frac{1}{2d}\sum_{x\in N_0(\om)}{\left[x+\chi(x,\cdot)\right]=\chi(0,\cdot)},
    \end{equation}
    which is equivalent to
    \begin{equation}
        \frac{1}{2d}\sum_{x\in N_0(\om)}{\left[\chi(0,\cdot)-\chi(x,\cdot)\right]}=V(\om).
    \end{equation}
    By the definition of $\chi$ we have for $x\in N_0(\om)$ that $\chi(x,\cdot)-\chi(0,\cdot)=G_{0,x}$, therefore the left hand side is the $\ep\downarrow 0$ limit of
    \begin{equation}
        -\frac{1}{2d}\sum_{x\in\BZ^d}{G_x^\ep}=\frac{1}{2d}\sum_{x\in\BZ^D}{\ind_{\Om_0}\ind_{\{x\in N_0(\om)\}}(\psi_e-\psi_\ep\circ\gth_x)}=(1-\Lambda)\psi_\ep.
    \end{equation}
    Using \eqref{defining_psi_ep}, we get that $(1-\Lambda)\psi_\ep=V-\ep\psi_\ep$, and therefore Lemma \ref{lemma:psi_squared_limit} implies that the $\ep\downarrow 0$ limit equals $V$ and is a function in $L^2$.

    Finally, we need to show the square integrability \eqref{square_integrability}. We note that, by the construction of the corrector,
    \begin{equation}
        \left[\chi(x+y,\cdot)-\chi(x,\cdot)\right]\ind_{\{x\in\CP(\om)\}}\ind_{\{y\in N_0(\om)\}}\circ\gth_x=G_{x,x+y}.
    \end{equation}
    But $G_{x,x+y}$ is the $L^2$ limit of $L^2$-functions $G_y^{(\ep)}\circ\gth_x$ whose $L^2$ norm is bounded by that of $G_y^{\ep}$. Hence  \eqref{square_integrability} follows with $C=\max_{\{x:x\in N_0(\om)\}}{\|G_{0,x}\|_2}$.
\end{proof}
\fi


\ifarxiv
\section{Sublinearity along coordinate directions}
\label{sec:sublinearity_coordinate}
\fi

\ifarxiv
\else
\section{Essential sublinearity of the corrector }
\label{sec:essential_sublinearity}
\fi

\ifarxiv
We are now ready to start treating the sublinearity of the corrector. In this section, we treat the sublinearity along the coordinate directions in $\BZ^d$ and in the next one we turn to discuss general sublinearity.
\else
\fi

Fix $e\in\CE$ and define the random sequence $n^e_k(\om)$ inductively by $n^e_1(\om)=f_e(\om)$ and $n^e_{k+1}=n^e_k(\si_e(\om))$, where $\si_e$ is the induced
translation defined by $\si_e=\gth^{f_e(\om)}_e$. The numbers $n^e_k$ are well-defined and finite $P$ almost surely. Let $\chi$ be the corrector from Theorem \ref{corrector_thm}. The \ifarxiv main \else first \fi goal of this section is to prove the following theorem:

\begin{thm}
    For $P$ almost all $\om\in\Om_0$
    \begin{equation}
        \lim_{k\rightarrow\infty}{\frac{\chi(n^e_k(\om)e,\om)}{k}}=0.
    \end{equation}
    \label{coordinate_sublinearity}
\end{thm}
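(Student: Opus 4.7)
The plan is to telescope $\chi(n_k^e(\om) e, \om)$ along the sequence of coordinate nearest neighbors in direction $e$, reducing the statement to Birkhoff's ergodic theorem applied to the induced shift $\si_e$. Set $\phi_e(\om) := \chi(f_e(\om) e, \om)$. Since $n_k^e(\om) e$ and $n_{k-1}^e(\om) e$ both lie in $\CP(\om)$, their difference is $f_e(\si_e^{k-1}\om)\, e$, and $\gth_{n_{k-1}^e(\om)e}\om = \si_e^{k-1}\om$, the shift-invariance property of the corrector from Theorem \ref{corrector_thm} yields
\[
\chi(n_k^e(\om) e, \om) - \chi(n_{k-1}^e(\om) e, \om) = \phi_e(\si_e^{k-1} \om).
\]
Using $\chi(0, \om) = 0$ and telescoping, I obtain $\chi(n_k^e(\om) e, \om) = \sum_{j=0}^{k-1} \phi_e(\si_e^j \om)$.

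The heart of the proof is to show that $\phi_e \in L^1(P)$ with $\BE_P[\phi_e] = 0$. Since $f_e(\om) e$ always lies in $N_0(\om)$, the approximant collapses to $G_{f_e(\om) e}^{(\ep)}(\om) = \psi_\ep(\si_e \om) - \psi_\ep(\om)$ for every $\ep > 0$ and $P$-a.e.\ $\om$, irrespective of the particular value of $f_e(\om)$. The Kipnis-Varadhan machinery behind the construction of the corrector---the identity $2\langle \Psi,(1-\Lambda)\Psi\rangle = \frac{1}{2d}\sum_{e' \in \CE}\|\Psi \circ \si_{e'} - \Psi\|_2^2$ applied with $\Psi = \psi_{\ep_1} - \psi_{\ep_2}$, together with the vanishing of $\langle \Psi,(1-\Lambda)\Psi\rangle$ as $\ep_1,\ep_2 \downarrow 0$---shows that $\psi_\ep \circ \si_e - \psi_\ep$ is Cauchy in $L^2(P)$. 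Its $L^2$-limit must coincide with $\phi_e$, hence $\phi_e \in L^2(P) \subset L^1(P)$. Because $\si_e$ preserves $P$, each approximant has zero mean, and $L^1$-continuity of expectation gives
\[
\BE_P[\phi_e] = \lim_{\ep \downarrow 0} \BE_P\left[ \psi_\ep \circ \si_e - \psi_\ep \right] = 0.
\]

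Finally, Theorem \ref{induced_shift_ergodicy} guarantees that $\si_e$ is measure-preserving and ergodic with respect to $P$, so Birkhoff's ergodic theorem applied componentwise to $\phi_e$ gives
\[
\lim_{k \to \infty} \frac{\chi(n_k^e(\om) e, \om)}{k} = \lim_{k \to \infty} \frac{1}{k} \sum_{j=0}^{k-1} \phi_e(\si_e^j \om) = \BE_P[\phi_e] = 0 \qquad P\text{-a.s.}
\]

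The main delicacy is the integrability of $\phi_e$. A naive route via the square-integrability bound \eqref{square_integrability} provides only $\|\chi(ke,\cdot)\ind_{\{ke \in N_0(\om)\}}\|_2 \leq C$ for each fixed $k\in\BN$, and summing these bounds over the possible values of $f_e(\om)$ fails because they do not decay. The decisive observation that circumvents this is the $\om$-independent form $G_{f_e(\om) e}^{(\ep)}(\om) = \psi_\ep \circ \si_e(\om) - \psi_\ep(\om)$, which reduces the integrability question to the Cauchy-in-$L^2$ property of a single deterministic sequence, already supplied by the construction of $\chi$.
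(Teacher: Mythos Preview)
Your proof is correct and follows essentially the same approach as the paper: telescope $\chi(n_k^e(\om)e,\om)$ into an ergodic average of $\phi_e=\chi(f_e(\om)e,\om)$ under the induced shift $\si_e$, establish $\phi_e\in L^2\subset L^1$ with $\BE_P[\phi_e]=0$ via the approximants $\psi_\ep\circ\si_e-\psi_\ep$, and apply Birkhoff's theorem using Theorem~\ref{induced_shift_ergodicy}. Your explicit emphasis on the identity $G_{f_e(\om)e}^{(\ep)}(\om)=\psi_\ep\circ\si_e(\om)-\psi_\ep(\om)$, which converts an apparently $\om$-dependent evaluation point into a fixed functional of $\om$, is exactly the observation that makes the paper's appeal to \eqref{lim_of_G_exist} rigorous when the point $x=n_1^e(\om)e$ is random.
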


The proof of this theorem is based on the following properties of $\chi(n^e_k(\om)e,\om)$:

\begin{prop}
    ~\\\vspace{-4mm}
    \begin{enumerate}

        \item $\BE_P\big[|\chi(n^e_1(\om)e,\cdot)|\big]<\infty.$

        \item $\BE_P\big[\chi(n^e_1(\om)e,\cdot)\big]=0.$

    \end{enumerate}
    \label{chi_moment_prop}
\end{prop}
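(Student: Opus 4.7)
The plan centers on the representation
\[
H_e(\om) := \chi(n^e_1(\om)e,\om) \;=\; \lim_{\ep\downarrow 0}\bigl(\psi_\ep\circ\si_e-\psi_\ep\bigr)\qquad \text{in } L^2(\Om_0,P),
\]
where $\psi_\ep$ is the Kipnis--Varadhan solution of $(1+\ep-\Lambda)\psi_\ep=V$ from \eqref{defining_psi_ep} and $\si_e$ is the induced shift. Granting this, part (1) follows from $L^2(P)\subset L^1(P)$, while part (2) follows from the identity $\BE_P[\psi_\ep\circ\si_e-\psi_\ep]=0$ (valid for every $\ep>0$ because $\si_e$ preserves $P$ by Theorem \ref{induced_shift_ergodicy} and $\psi_\ep\in L^2\subset L^1$) by passing to the $L^2$ limit.

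To justify the representation I would first establish that $(\psi_\ep\circ\si_e-\psi_\ep)_\ep$ is Cauchy in $L^2(P)$. Setting $\Psi:=\psi_{\ep_1}-\psi_{\ep_2}$ and using $\si_e$-invariance of $P$,
\[
\|\Psi\circ\si_e-\Psi\|_2^2 \;=\; 2\|\Psi\|_2^2-2\langle\Psi,\Psi\circ\si_e\rangle,
\]
and summing over $e\in\CE$ together with the identity $\Lambda\Psi=\frac{1}{2d}\sum_{e\in\CE}\Psi\circ\si_e$ (valid on $\Om_0$) gives
\[
\sum_{e\in\CE}\|\Psi\circ\si_e-\Psi\|_2^2 \;=\; 4d\,\langle\Psi,(1-\Lambda)\Psi\rangle,
\]
which tends to $0$ as $\ep_1,\ep_2\downarrow 0$ by the same spectral calculation that drives \eqref{lim_of_G_exist} (the integrand $(\ep_1-\ep_2)^2(1-\lam)/[(1+\ep_1-\lam)(1+\ep_2-\lam)]^2$ is dominated by $(1-\lam)^{-1}$ against the spectral measure $\mu_V$ of $V$, and vanishes pointwise).

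To identify the resulting $L^2$-limit with $H_e$, I would partition $\Om_0$ by the value of $f_e$. On the event $\{f_e=k\}\subset\Om_0$ one has $\si_e(\om)=\gth_{ke}(\om)$ and $ke\in N_0(\om)$, so recalling \eqref{G_definition},
\[
\psi_\ep\circ\si_e-\psi_\ep \;=\; \sum_{k\geq 1}\ind_{\{f_e=k\}}\,G^{(\ep)}_{ke}\qquad\text{on } \Om_0.
\]
Extracting an a.e.\ convergent subsequence from the Cauchy sequence and using that $G^{(\ep)}_{ke}\to G_{0,ke}=\chi(ke,\cdot)\,\ind_{\{ke\in N_0(\om)\}}$ in $L^2$ for each fixed $k$ (with pointwise convergence along a further subsequence), the $L^2$ limit coincides $P$-a.s.\ with $\sum_{k\geq 1}\ind_{\{f_e=k\}}\chi(ke,\cdot)=\chi(f_e(\om)e,\om)=H_e$, using that $\{f_e=k\}\subset\{ke\in N_0(\om)\}$.

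The main obstacle is precisely this identification step: $H_e$ evaluates the corrector at a \emph{random} point, so one cannot take the $\ep\downarrow 0$ limit of $\psi_\ep(\si_e\om)-\psi_\ep(\om)$ in a purely pointwise manner (the exceptional null set of $\psi_\ep$ depends on $\ep$). The summed Cauchy estimate, which is a direct consequence of the spectral bound $\int(1-\lam)^{-1}d\mu_V(\lam)<\infty$ underlying the corrector's construction, is what upgrades the informal pointwise identity into a rigorous $L^2$ statement, after which (1) and (2) fall out at once.
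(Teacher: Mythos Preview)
Your proof is correct and follows essentially the same approach as the paper: represent $\chi(n^e_1(\om)e,\om)$ as the $L^2$-limit of $\psi_\ep\circ\si_e-\psi_\ep$ (equivalently $G^{(\ep)}_{n^e_1(\om)e}$), deduce (1) from $L^2\subset L^1$, and deduce (2) from $\si_e$-invariance of $P$ applied at positive $\ep$ and then passing to the limit. You are more careful than the paper about the identification step---the paper simply invokes \eqref{lim_of_G_exist}, which is stated for \emph{fixed} $x$, whereas you explicitly handle the random evaluation point $n^e_1(\om)e$ via the summed Dirichlet-form identity and the partition over $\{f_e=k\}$; this is a genuine detail the paper glosses over, and your treatment of it is sound.
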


\begin{proof}
    Using the definition of the corrector \eqref{chi_definition}, it follows that
    \begin{equation}
        \chi(n^e_1(\om)e,\om)=G_{0,n^e_1(\om)e}(\om).
    \end{equation}
    By \eqref{lim_of_G_exist}, and since $G_{0,n^e_1(\om)e}(\om)$ is the $\ep\downarrow 0$ limit of $G^{(\ep)}_{n^e_1(\om)e}$ in $L^2$, it follows that $G_{0,n^e_1(\om)e}(\om)\in L^2$. Since $P$ is a probability measure, it is in particular a finite measure, and therefore for every $1\leq r<2$ it is also true that $G_{0,n^e_1(\om)e}(\om)\in L^r$. Taking $r=1$ gives
    \begin{equation}
        \BE_P\big[|\chi(n^e_1(\om)e,\cdot)|\big]=\BE_P\big[|G_{0,n^e_1(\om)e}(\om)|\big]<\infty.
    \end{equation}

    For (2), observe that by Definition \ref{G_definition} and Theorem \ref{induced_shift_ergodicy}, for every $\ep>0$,
    \begin{equation}
    \begin{aligned}
        \BE_P\big[G^{(\ep)}_{n^e_1(\om)e}\big]&=\BE_P\big[\ind_{\Om_0}\ind_{\{n^e_1(\om)e\in
        N_0(\om)\}}(\psi_\ep\circ\gth^{n^e_1(\om)}_e-\psi_\ep)\big]\nonumber\\
        &=\BE_P\big[\ind_{\Om_0}\ind_{\{n^e_1(\om)e\in
        N_0(\om)\}}\psi_\ep\circ\gth^{n^e_1(\om)}_e\big]-\BE_P\big[\ind_{\Om_0}\ind_{\{n^e_1(\om)e\in N_0(\om)\}}\psi_\ep\big]\nonumber\\
        &=\BE_P\big[(\ind_{\Om_0}\ind_{\{n^e_1(\om)e\in N_0(\om)\}}\psi_\ep)\circ\si_e\big]-\BE_P\big[\ind_{\Om_0}\ind_{\{n^e_1(\om)e\in
        N_0(\om)\}}\psi_\ep\big]=0.
    \end{aligned}
    \end{equation}
    Thus by the definition of $\chi$ and the fact that it is in $L^1$
    \[
        \BE_P\big[\chi(n^e_1(\om)e,\cdot)\big]=\BE_P\big[G_{0,n^e_1(\om)e}\big]=\lim_{\ep\downarrow 0}\BE_P\big[G^{(\ep)}_{n^e_1(\om)e}\big]=0.
    \]
\end{proof}

\begin{proof}[Proof of Theorem \ref{coordinate_sublinearity}]
    Define $g:\Om\rightarrow\BR^d$ by $g(\om)=\chi(n^e_1(\om)e,\om)$, and let $\si_e$ be the induced shift in direction e. Then
    \begin{equation}
        \chi(n^e_k(\om)e,\om)=\sum_{i=0}^{k-1}{g\circ\si_e^i(\om)}.
    \end{equation}
    By Proposition \ref{chi_moment_prop} we have that $g\in L^1$ and $\BE_P[g]=0$. Since Theorem \ref{induced_shift_ergodicy} ensures $\si_e$ is $P$ preserving and ergodic, the claim follows from Birkhoff's Ergodic Theorem.
\end{proof}

\ifarxiv
\else

Next we turn to discuss general sublinearity of the corrector. The following Theorem states a weaker notion of sublinearity satisfied by the corrector. This notion though weaker than the one obtained for points along coordinate direction is enough in order to prove high dimensional CLT.

\begin{thm}
    For every $\ep>0$ and $P$ almost every $\om\in\Om_0$
    \begin{equation}
        \limsup_{n\rightarrow\infty}{\frac{1}{(2n+1)^d}\sum_{x\in\CP(\om),~|x|\leq n}\ind_{\{|\chi(x,\om)|\geq \ep n\}}}\leq \ep. \label{sublinearity_formula}
    \end{equation}
    \label{sub_linearity_everywhere}
\end{thm}

The proof of Theorem \ref{sub_linearity_everywhere} follows the same lines as the one in \cite{BB06} (Theorem 5.4) without major changes, and therefore we omit it from this version.
\fi


\ifarxiv
\section{Sublinearity everywhere}\label{sec:sublinearity_everywhere}

In this section we deal with a weaker notion of sublinearity satisfied by the corrector. This notion though weaker than the one obtained for points along coordinate direction is enough in order to prove high dimensional CLT. More precisely we prove the following theorem:

\begin{thm}
    For every $\ep>0$ and $P$ almost every $\om\in\Om_0$
    \begin{equation}
        \limsup_{n\rightarrow\infty}{\frac{1}{(2n+1)^d}\sum_{x\in\CP(\om),~|x|\leq n}\ind_{\{|\chi(x,\om)|\geq \ep n\}}}\leq \ep. \label{sublinearity_formula}
    \end{equation}
    \label{sub_linearity_everywhere}
\end{thm}

The proof, up to notations, is exactly the same as the one of Theorem 5.1 in \cite{BB06} and we give it here for completeness. We start with the following definition:

\begin{defn}
    Given $K > 0$ and $\ep > 0$, we say that a site $x\in\BZ^d$ is $K, \ep$-good in configuration $\om\in\Om$ if $x\in\CP(\om)$ and
    \begin{equation}
        |\chi(y,\om)-\chi(x,\om)|<K+\ep |x-y|,
    \end{equation}
    holds for every $y\in\CP(\om)$ of the form $y=le$, where $l\in\BZ$ and e is a unit coordinate vector. We will use $\mathcal{G}_{K,\ep}=\mathcal{G}_{K,\ep}(\om)$ to denote the set of $K,\ep$-good sites in the configuration $\om$.
\end{defn}

Before stating the proof, we give a short introduction of the basic idea.

Fix the dimension $d$, and for each $\nu = 1,2,\ldots, d$ let $\Lambda_n^\nu$ be the $\nu$-dimensional box
\begin{equation}
    \Lambda_n^\nu=\{k_1 e_1+\ldots+k_\nu e_\nu:k_i\in\BZ,|k_i|\leq n,\forall i=1,2,\ldots,\nu\}.
\end{equation}
We will run an induction over $\nu$-dimensional sections of the $d$-dimensional box $\{x\in\BZ^d:|x|\leq n\}$. The induction eventually gives Theorem  \ref{sub_linearity_everywhere} for $\nu=d$ thus proving it. Since it is not advantageous to assume that $0\in\CP(\om)$, we will carry out the proof for differences of the form $\chi(x,\om)-\chi(y,\om)$ with $x,y\in\CP(\om)$. For each $\om\in\Om$, we thus consider the (upper) density
\begin{equation}
    \CQ_\nu(\om)=\lim_{\ep\downarrow 0}{\lim_{n\rightarrow\infty}{\inf_{y\in \CP(\om)\cap\Lambda_n^1}{\frac{1}{|\Lambda_n^1|}{\sum_{x\in\CP(\om)\cap\Lambda_n^\nu}{\ind_{\{|\chi(x,\om)-\chi(y,\om)|\geq \ep n\}}}}}}}. \label{Q_nu_definition}
\end{equation}
Note that the infimum is taken only over sites in the one-dimensional box $\Lambda_n^1$. Our goal is to show by induction that $\CQ_\nu=0$ almost surely for all $\nu=1,\ldots,d$. The induction step is given by the following lemma:

\begin{lem}[\cite{BB06} Lemma 5.3]
    Let $1\leq \nu < d$. If $\CQ_\nu=0$ $P$ almost surely, then also $\CQ_{\nu+1}=0$ $P$ almost surely. \label{induction_lemma}
\end{lem}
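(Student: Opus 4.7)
My plan is to decompose $\Lambda_n^{\nu+1}$ into the $(2n{+}1)$ parallel $\nu$-dimensional slices $L_j := \Lambda_n^\nu + j e_{\nu+1}$, $|j| \leq n$, apply the inductive hypothesis $\CQ_\nu = 0$ on each slice with a carefully chosen reference point $y_j$, and bridge the references on different slices using the coordinate-direction sublinearity provided by Theorem \ref{coordinate_sublinearity}.

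Concretely, fix $\ep > 0$ and a typical $\om \in \Om_0$, and let $y \in \CP(\om) \cap \Lambda_n^1$ be a (near-)optimal reference point for the infimum in (\ref{Q_nu_definition}) at dimension $\nu{+}1$. For each $j \in \{-n,\dots,n\}$ I would select a site $y_j \in \CP(\om)$ on the coordinate line $y + \BZ e_{\nu+1}$ closest to $y + j e_{\nu+1}$; such sites exist by Claim \ref{The_model_claim}, and applying Theorem \ref{coordinate_sublinearity} to the shifted environment $\theta_y\om$ along direction $e_{\nu+1}$ yields the key bridging estimate
\begin{equation}
    \max_{|j| \leq n}\,|\chi(y_j,\om) - \chi(y,\om)| < \ep n \nonumber
\end{equation}
for all $n$ large enough.

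By the $\theta_{e_i}$-stationarity of $P$, the event $\{\CQ_\nu = 0\}$ is invariant and therefore holds simultaneously for $P$-a.e.~translated environment $\theta_{j e_{\nu+1}}\om$. Applying the inductive hypothesis to each such shifted environment, with reference point $y_j - j e_{\nu+1} \in \Lambda_n^1 \cap \CP(\theta_{j e_{\nu+1}}\om)$, and transporting back to $\om$ via the shift-covariance (\ref{shift_invariance}), gives
\begin{equation}
    \#\{x \in \CP(\om) \cap L_j : |\chi(x,\om) - \chi(y_j,\om)| \geq \ep n\} \leq \ep\,|\Lambda_n^\nu|, \nonumber
\end{equation}
for all sufficiently large $n$. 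Summing over $j$ and applying the triangle inequality with the bridging bound, at most $2\ep\,|\Lambda_n^{\nu+1}|$ points $x \in \Lambda_n^{\nu+1}$ satisfy $|\chi(x,\om) - \chi(y,\om)| \geq 2\ep n$. Letting $\ep \downarrow 0$ yields $\CQ_{\nu+1}(\om) = 0$.

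The principal obstacle is uniformity in $j$: both the inductive bound and Theorem \ref{coordinate_sublinearity} provide convergence rates that, for each fixed environment, hold only eventually in $n$, yet the argument applies them simultaneously on $(2n{+}1)$ shifted environments whose index range grows with $n$. This is resolved by an ergodic-theoretic Fubini argument — the indicator of ``slice $j$ is bad'' defines a shift-invariant bounded measurable function on $\Om_0$, and Birkhoff's theorem along $\theta_{e_{\nu+1}}$ shows that its spatial average over $j \in [-n,n]$ converges to its expectation, which vanishes by the inductive hypothesis — carried out in the same manner as in \cite[Lemma~5.3]{BB06}.
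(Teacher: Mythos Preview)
Your overall slice-and-bridge picture is natural, but the bridging step as written does not work, and the route the paper (and \cite{BB06}) actually takes is different.

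Concretely, your reference point $y_j - j e_{\nu+1}$ does not lie in $\Lambda_n^1$. Indeed $y_j = y + k_j e_{\nu+1}$ for some $k_j$ (the closest occupied height on the line $y+\BZ e_{\nu+1}$), so $y_j - j e_{\nu+1} = y + (k_j - j)e_{\nu+1}$ has a nonzero $e_{\nu+1}$-component whenever $k_j\neq j$, while $\Lambda_n^1\subset\BZ e_1$. Thus it is not an admissible choice for the infimum in \eqref{Q_nu_definition}. If you instead shift by $k_j e_{\nu+1}$ so that the reference point $y$ becomes legitimate, you only control slices at heights $k_j$ where the single line $y+\BZ e_{\nu+1}$ meets $\CP(\om)$ --- a fraction of roughly $Q(\Om_0)$ of all $2n{+}1$ slices, which is exactly the obstacle the paper spells out before introducing its fix. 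If instead you let the inductive hypothesis pick its own optimal reference $y_j^\ast\in\Lambda_n^1\cap\CP(\theta_{je_{\nu+1}}\om)$ on each slice, the points $y_j^\ast + j e_{\nu+1}$ now lie scattered in the two-dimensional strip $\Lambda_n^1+[-n,n]e_{\nu+1}$, and a single invocation of Theorem~\ref{coordinate_sublinearity} along one line no longer bridges them to a common anchor.

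The paper sidesteps both difficulties by a different mechanism. It introduces $K,\ep$-\emph{good} sites --- sites at which coordinate sublinearity in every direction is already captured by an additive constant $K$ --- and works with a \emph{fixed} finite stack $\Lambda_{n,j}^\nu$, $j=1,\ldots,L$, where $L=L(\delta)$ is chosen so that all but a $\delta$-fraction of columns in direction $e_{\nu+1}$ contain an occupied site within the stack. From a good site in the stack the goodness property then propagates the estimate along the entire column of length $2n{+}1$, and because $L$ is fixed the inductive hypothesis is applied to only finitely many shifted environments, eliminating the uniformity issue outright. Bridging between slices goes through a pair of good sites $z_i,z_j$ sharing the same $e_{\nu+1}$-column inside the stack, not through your single line $y+\BZ e_{\nu+1}$. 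Your final appeal to \cite[Lemma~5.3]{BB06} therefore does not help: that lemma is proved by exactly this good-site/finite-stack argument, not by a Birkhoff averaging over $2n{+}1$ slices.
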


Before we start the formal proof, we give the main idea: Suppose that $\CQ_\nu=0$ for some $\nu<d$, $P$ almost surely. Pick $\ep>0$. Then for $P$ almost every $\om$ and all sufficiently large n, there exists a set of sites $\Delta\subset\Lambda_n^\nu\cap\CP(\om)$ such that
\begin{equation}
    |(\Lambda_n^\nu\cap\CP(\om))\backslash\Delta|\leq \ep|\Lambda_n^\nu|, \label{Lambda_1+property}
\end{equation}
and
\begin{equation}
    |\chi(x,\om)-\chi(y,\om)|\leq\ep n,\quad\forall ~x,y\in\Delta. \label{induction_hypothesis}
\end{equation}
Moreover, for $n$ sufficiently large, $\Delta$ could be picked so that $\Delta\cap\Lambda_n^1\neq\emptyset$ and, assuming $K\gg 1$ the non-$K,\ep$-good sites could be pitched out with little loss of density to achieve even
\begin{equation}
    \Delta\subset \mathcal{G}_{K,\ep}. \label{Lambda_3_property}
\end{equation}

(All these claims are direct consequences of the Pointwise ergodic Theorem and the fact that $P(0\in\mathcal{G}_{K,\ep})$ converges to $P(0\in \Om)$ as $k\rightarrow\infty$). As a result of this construction we have
\begin{equation}
    |\chi(z,\om)-\chi(x,\om)\|\leq K+\ep n, \label{bound_sublinearity}
\end{equation}

for any $x\in\Delta$ and any $z\in \Lambda_n^{\nu+1}\cap\CP(\om)$ of the form $x+je_{\nu+1}$. Thus, if $r,s\in\CP(\om)\cap\Lambda_n^{\nu+1}$ are of the form,
$r=x+je_{\nu+1}$ and $s=y+ke_{\nu+1}$ then \eqref{bound_sublinearity} implies
\begin{equation}
\begin{aligned}
    |\chi(r,\om)-\chi(s,\om)|&\leq |\chi(r,\om)-\chi(x,\om)|+|\chi(x,\om)-\chi(y,\om)|+|\chi(y,\om)-\chi(s,\om)|\\
    &\leq 2K+2\ep n + |\chi(x,\om)-\chi(y,\om)|.\label{rs-inequality}
\end{aligned}
\end{equation}

Invoking the induction hypothesis \eqref{induction_hypothesis}, the right hand side is less than $2K+3\ep n$, implying a bound of the type  \eqref{induction_hypothesis} but one dimension higher. Unfortunately, the above is not sufficient to prove \eqref{induction_hypothesis} for all but a vanishing fraction of sites in $\Lambda_n^{\nu+1}$. The reason is that the $r's$ and $s's$ for which \eqref{rs-inequality} holds, need to be of the form $x+je_{\nu+1}$ for some $x\in \Delta\cap\CP(\om)$. But $\CP(\om)$ will occupy only about a $P(0\in\CP(\om))$ fraction of all sites in $\Lambda_n^\nu$, and so this argument does not permit to control more than a fraction of about $P(0\in \CP(\om))$ of $\Lambda_n^{\nu+1}\cap\CP(\om)$.

To fix this problem, we will have to work with a "stack" of translates of $\Lambda_n^\nu$ simultaneously. Explicitly, consider the collection of $\nu$-boxes
\begin{equation}
    \Lambda_{n,j}^\nu=\gth^j_{e_{\nu+1}}(\Lambda_n^\nu)~~~~j=1,2\ldots,L.
\end{equation}
Here $L$ is a deterministic number chosen so that, for a given $\delta>0$, the set
\begin{equation}
    \Delta_0=\{x\in\Lambda_n^\nu:\exists j\in\{0,1,\ldots,L-1\},~x+je_{\nu+1}\in\Lambda_{n,j}^\nu\cap\CP(\om)\},
\end{equation}
is so large that for sufficiently large $n$
\begin{equation}
    |\Delta_0|\geq (1-\delta)|\Lambda_n^\nu|.
\end{equation}
These choices ensure that $(1-\delta)$-fraction of $\Lambda_n^\nu$ is now "covered" which, by repeating the above argument, gives us control over $\chi(r,\om)$
for nearly the same fraction of all sites $r\in \Lambda^{\nu+1}\cap\CP(\om)$.

\begin{proof}[Proof of Lemma \ref{induction_lemma}]
    Let $\nu<d$ and suppose that $\mathcal{Q}_\nu=0$ $~P$ almost surely. Fix $\delta>0$ with $0<\delta<\frac{1}{2}P(0\in\CP(\om))^2$ and let $L$ be as defined above. Choose $\ep>0$ so that
    \begin{equation}
        L\ep+\delta<\frac{1}{2}P(0\in\CP(\om))^2. \label{delta_epsilon_assumption}
    \end{equation}
    For a fixed but large $K$, $P$ almost every $\om$ and $n$ exceeding an $\om$-dependent quantity, for each $j=1,2,\ldots,L$, we can find $\Delta_j\subset \Lambda_{n,j}^\nu\cap\CP(\om)$ satisfying the properties (\ref{Lambda_1+property}-\ref{Lambda_3_property}) - with $\Lambda_n^\nu$ replaced by $\Lambda_{n,j}^\nu$. Given $\Delta_1,\ldots,\Delta_L$, let $\Lambda$ be the set of sites in $\Lambda_n^{\nu+1}\cap\CP(\om)$ whose projection onto the linear subspace $\mathbb{H}=\{k_1 e_1+\ldots+k_\nu e_\nu:k_i\in\BZ\}$ belongs to the corresponding projection of $\Delta_1\cup\ldots\cup\Delta_L$. Note that the  $\Delta_j$ could be chosen so that $\Lambda\cap\Lambda_n^1\neq\emptyset$. By their construction, the projections of the $\Delta_j's$, $j=1,\ldots,L$ onto  $\mathbb{H}$ "fail to cover" at most $L\ep|\Lambda_n^\nu|$ sites in $\Delta_0$, and so at most $(\delta+L\ep)|\Lambda_n^\nu|$ sites in $|\Lambda_n^\nu|$ are  not of the form $x+ie_{\nu+1}$ for some $x\in\bigcup_j{\Delta_j}$. It follows that
    \begin{equation}
        |(\Lambda_n^{\nu+1}\cap\CP(\om))\backslash\Lambda|\leq(\delta+L\ep)|\Lambda_n^{\nu+1}|, \label{set_bound}
    \end{equation}
    i.e., $\Lambda$ contains all except at most $(\delta+L\ep)$-fraction of all sites in $\Lambda_n^{\nu+1}$ that we care about. Next we note that if $K$ is  sufficiently  large, then for every $1\leq i<j\leq L$, the set $\mathbb{H}$ contains $\frac{1}{2}P(0\in\CP(\om))$-fraction of sites such that
    \begin{equation}
        z_i\overset{def}{=}x+ie_\nu\in\mathcal{G}_{K,\ep},~~~~~~z_j=x_je_\nu\in\mathcal{G}_{K,\ep}.
    \end{equation}
    Since we assumed \eqref{delta_epsilon_assumption}, once $n\gg 1$, for each pair $(i,j)$ with $1\leq i<j\leq L$ such $z_i$ and $z_j$ can be found so that $z_i\in\Delta_i$ and $z_j\in\Delta_j$. But the $\Delta_j's$ were picked to make \eqref{induction_hypothesis} true and so using these pairs of sites we now  show that
    \begin{equation}
    \begin{aligned}
        |\chi(y,\om)-\chi(x,\om)|&\leq |\chi(y,\om)-\chi(z_j,\om)|+|\chi(z_j,\om)-\chi(z_i,\om)|+|\chi(z_i,\om)-\chi(x,\om)|\\
        &\leq \ep n + K + \ep L + \ep n = K+\ep L+2\ep n, \label{induction_inequality}
    \end{aligned}
    \end{equation}
    for every $x,y\in\Delta_1\cup\ldots\cup\Delta_L$. From \eqref{induction_hypothesis} and \eqref{induction_inequality}, we now conclude that for all $r,s\in\Lambda$,
    \begin{equation}
        |\chi(r,\om)-\chi(s,\om)|\leq 3K+\ep L + 4\ep n< 5\ep n, \label{chi_bound}
    \end{equation}
    assuming that $n$ is so large that $\ep n>3K+\ep L$. If $\CQ_{\nu,\ep}$ denotes the right-hand side of \eqref{Q_nu_definition} before taking $\ep\downarrow 0$, the bounds \eqref{set_bound} and \eqref{chi_bound} and the fact that $\Lambda\cap \Lambda_n^1\neq\emptyset$ yield
    \begin{equation}
        \CQ_{\nu+1,5\ep}(\om)\leq \delta+L\ep,
    \end{equation}
    for $P$ almost every $\om$, But the left-hand side of this inequality increases as $\ep\downarrow 0$ while the right hand side decreases. Thus, taking $\ep\downarrow 0$ and $\delta\downarrow 0$ proves that $\CQ_{\nu+1}=0$ holds $P$ almost surely.
\end{proof}

\begin{proof}[Proof of Theorem \ref{sub_linearity_everywhere}]
    The proof is an easy consequence of Lemma \ref{induction_lemma}. First, by Theorem \ref{coordinate_sublinearity} we know that $\CQ(\om)=0$ for $P$ almost every $\om$. Invoking appropriate shifts, the same conclusion applies $Q$ almost surely. Using induction on dimension, Lemma \ref{induction_lemma} then tells  us that $\CQ_d(\om)=0$ for $P$ almost every $\om$. Let $\om\in\Om_0$. By Theorem \ref{coordinate_sublinearity}, for each $\ep>0$ there is $n_0=n_0(\om)$ with  $P(n_0<\infty)=1$ such that for all $n\geq n_0(\om)$, we have $|\chi(x,\om)|\leq \ep n$ for all $x\in \Lambda_n^1\cap \CP(\om)$. Using this to estimate away  the infimum in \eqref{Q_nu_definition}, the fact that $\CQ_d=0$ now immediately implies \eqref{sublinearity_formula} for all $\ep>0$.
\end{proof}
\else
\fi


\section{High dimensional Central Limit Theorem}\label{sec:high_dim_CLT}

Here we finally prove the high dimensional CLT, starting with the following Lemma:

\begin{lem}
    Fix $\om\in\Om_0$ and let $x\mapsto \chi(x,\om)$ be the corrector as defined in Theorem \ref{corrector_thm}. Given a path of a random walk $\{X_n\}_{n=0}^{\infty}$ on $\CP(\om)$ with transition probabilities \eqref{transition_probability} let
    \begin{equation}
        M_n^{(\om)}=X_n+\chi(X_n,\om),\quad\forall n\geq 0. \label{deformed_random_walk}
    \end{equation}
    Then $\{M_n^{(\om)}\}_{n\geq 0}$ is an $L^2$-martingale w.r.t the filtration $\{\si(X_0,X_1,\ldots,X_n)\}_{n\geq 0}$. Moreover, conditioned on $X_{k_0}=x$, the increments $\{M_{k+k_0}^{(\om)}-M_{k_0}^{(\om)}\}_{k\geq 0}$ have the same law as $\{M_k^{(\gth_x\om)}\}_{k\geq 0}$.
\end{lem}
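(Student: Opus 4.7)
The plan is to separate the statement into three pieces: the $L^2$ property, the martingale identity, and the law of the increments.

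\textbf{$L^2$ integrability.} This is essentially free. For a fixed environment $\om$ and a fixed $n$, at most $(2d)^n$ distinct sites of $\CP(\om)$ are reachable by the walk in $n$ steps under $P_\om$, so $M_n^{(\om)}$ takes only finitely many values (with an $\om$- and $n$-dependent bound) and is therefore in $L^2(P_\om)$. No tail estimate on $\chi$ or on $f_e$ is required at this stage.

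\textbf{Martingale property.} This is the heart of the statement and a direct consequence of the harmonicity asserted in Theorem \ref{corrector_thm}. Setting $\CF_n = \si(X_0, \ldots, X_n)$, the Markov property under $P_\om$ together with the transition probabilities \eqref{transition_probability} yields
\[
E_\om[M_{n+1}^{(\om)} \mid \CF_n] = \frac{1}{2d} \sum_{y \in N_{X_n}(\om)} \bigl(y + \chi(y, \om)\bigr).
\]
Applying the harmonicity of $x \mapsto x + \chi(x,\om)$ at the site $X_n \in \CP(\om)$ identifies the right-hand side with $X_n + \chi(X_n, \om) = M_n^{(\om)}$, giving the martingale identity.

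\textbf{Law of the increments.} I would combine the shift invariance of the corrector, \eqref{shift_invariance}, with the shift covariance of the transition mechanism (namely $N_v(\om) = v + N_0(\gth_v \om)$, which is immediate from the definitions of $N_v$ and $f_e$). Working on the event $\{X_{k_0}=x\}$ and letting $Y_k := X_{k+k_0} - x$, the ordinary Markov property applied at the deterministic time $k_0$ shows that the conditional $P_\om$-law of $\{Y_k\}_{k \geq 0}$ coincides with the $P_{\gth_x \om}$-law of $\{X_k\}_{k \geq 0}$. Using \eqref{shift_invariance} at $x$ and $x + Y_k$ (both in $\CP(\om)$, since the walk stays in the point process) gives
\[
M_{k+k_0}^{(\om)} - M_{k_0}^{(\om)} = Y_k + \chi(x+Y_k, \om) - \chi(x,\om) = Y_k + \chi(Y_k, \gth_x \om),
\]
which is precisely $M_k^{(\gth_x \om)}$ read along the translated path. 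The only mild point of care is that \eqref{shift_invariance} is applied at random points, but those points lie almost surely in $\CP(\om)$, which is exactly the range of validity of the identity, so I do not expect any real obstacle.
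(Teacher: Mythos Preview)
Your argument is correct and follows essentially the same route as the paper: boundedness of $X_n$ (hence of $M_n^{(\om)}$) for the $L^2$ property, harmonicity from Theorem \ref{corrector_thm} for the martingale identity, and shift invariance \eqref{shift_invariance} combined with the Markov property for the law of the increments. Your write-up is in fact more explicit than the paper's, which dispatches the last point in a single sentence by appealing to shift invariance and the interpretation of $M_n^{(\om)}$ as a simple random walk on the deformed graph.
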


\begin{proof}
    Since $X_n$ is bounded, $\chi(X_n,\om)$ is bounded and so $M_n^{(\om)}$ is square integrable with respect to $P_\om$. By Theorem \ref{corrector_thm} the map $x\mapsto x+\chi(x,\om)$ is harmonic with respect to the transition probabilities in \eqref{transition_probability}, and therefore
    \begin{equation}
        E_\om[M_{n+1}^{(\om)}|\si(X_n)]=M_n^{(\om)},\quad\forall n\geq 0,~ P_\om~a.s.
    \end{equation}
    By the definition of $M_n^{(\om)}$ it is $\si(\{X_k\}_{k=1}^n)$-measurable, and therefore $\{M_n^{(\om)}\}$ is a martingale. The stated relation between the laws of $\{M_{k+k_0}^{(\om)}-M_{k_0}^{(\om)}\}_{k\geq 0}$ and $\{M_k^{(\gth_x \om)}\}_{k\geq 0}$ is implied by the shift invariance proved in Theorem  \ref{corrector_thm} and the fact that $\{M_n^{(\om)}\}_{n\geq 0}$ is a simple random walk on the deformed graph.
\end{proof}

\begin{thm}[CLT of the Modified random walk]
    Fix $d\geq 2$. and assume $P$ satisfies assumptions \ref{Assumptions} and \ref{assumption3}. For $\om\in\Om_0$ let $\{X_n\}_{n\geq 0}$ be a random walk with transition probabilities \eqref{transition_probability} and $\{M_n^{(\om)}\}_{n\geq 0}$ as in \eqref{deformed_random_walk}. Then for $P$ almost every  $\om\in\Om_0$ we have
    \begin{equation}
        \lim_{n\rightarrow\infty}\frac{M_n^{(\om)}}{\sqrt{n}}\overset{D}{=}N(0,D),
    \end{equation}
    where the convergence is in distribution and $N(0,D)$ is a $d$-dimensional multivariate normal distribution with covariance matrix $D$ which depends on $d$ and the distribution $P$, given by $D_{i,j}=\BE\left[cov(M_1^{(\om)}\cdot e_i,M_1^{(\om)}\cdot e_j)\right]$. \label{Modified_CLT}
\end{thm}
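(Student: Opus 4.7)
The strategy is to apply a quenched martingale central limit theorem to $M_n^{(\om)}$, viewing its increments as a stationary ergodic sequence produced by the environment viewed from the particle. Writing $\Delta_k := M_k^{(\om)} - M_{k-1}^{(\om)}$, the shift-invariance of $\chi$ from Theorem \ref{corrector_thm} gives $\Delta_k = \Phi\big(\theta_{X_{k-1}}\om, X_k - X_{k-1}\big)$ for a fixed measurable $\Phi$, so each $\Delta_k$ is a functional of the pair $(\theta_{X_{k-1}}\om, \theta_{X_k}\om)$. By the preceding lemma, $\{\Delta_k\}$ is an $L^2$-martingale difference sequence under $P_\om$. It therefore suffices, by the Lindeberg--Feller martingale CLT, to verify that the conditional covariance sums $V_n := \frac{1}{n}\sum_{k=1}^n E_\om\big[\Delta_k\Delta_k^T \mid \CF_{k-1}\big]$ converge a.s.~to a deterministic matrix $D$, and that the conditional Lindeberg condition holds for $\{\Delta_k/\sqrt{n}\}$.

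Both conditions will be extracted from the mutual ergodic theorem (Theorem \ref{Thm_mutual_ergodic}). By the Markov property and the correspondence between the laws of $\{M_{k+k_0}^{(\om)}-M_{k_0}^{(\om)}\}_{k\ge 0}$ and $\{M_k^{(\theta_x\om)}\}_{k\ge 0}$,
\[
E_\om\big[\Delta_k\Delta_k^T \mid \CF_{k-1}\big] = C\big(\theta_{X_{k-1}}\om\big), \qquad C(\om') := E_{\om'}\big[M_1^{(\om')}(M_1^{(\om')})^T\big],
\]
so $V_n$ is precisely a Birkhoff average of $C$ along the chain of environments. Provided $C\in L^1(\Om_0,P)$, Theorem \ref{Thm_mutual_ergodic} gives $V_n\to \BE_P[C] =: D$ for $P$-a.e.~$\om$ and $P_\om$-a.e.~trajectory, producing the matrix in the statement. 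The Lindeberg condition is handled similarly: with $g_n(\om') := E_{\om'}\big[|M_1^{(\om')}|^2 \ind_{\{|M_1^{(\om')}|>\ep\sqrt{n}\}}\big]$, the conditional Lindeberg sum equals $\frac{1}{n}\sum_{k=1}^n g_n(\theta_{X_{k-1}}\om)$; since $g_n\downarrow 0$ pointwise with $g_n\le \mathrm{tr}\,C$, one first bounds by $g_N$ for a fixed $N\le n$, applies Theorem \ref{Thm_mutual_ergodic} to get $\limsup_n \frac{1}{n}\sum_{k=1}^n g_N(\theta_{X_{k-1}}\om) = \BE_P[g_N]$, and lets $N\to\infty$ by dominated convergence.

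The main obstacle is thus the a priori integrability $\BE_P\big[E_\om[|M_1^{(\om)}|^2]\big] < \infty$. The drift contribution $\BE_P[E_\om[|X_1|^2]] = (2d)^{-1}\sum_{e\in\CE}\BE_P[f_e^2]$ is finite by Assumption \ref{assumption3}. The corrector contribution satisfies
\[
\BE_P\big[E_\om[|\chi(X_1,\om)|^2]\big] = \frac{1}{2d}\sum_{y\in\BZ^d}\BE_P\big[\ind_{\{y\in N_0(\om)\}}|\chi(y,\om)|^2\big] = \frac{1}{2d}\sum_{y\in\BZ^d}\|G_{0,y}\|_2^2,
\]
and the per-term bound from Theorem \ref{corrector_thm}(3) diverges when summed naively over $y$. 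Instead I would pass through the approximants $G_y^{(\ep)}$ from the Kipnis--Varadhan construction: a direct computation using the reversibility of $\Lambda$ and $(1+\ep-\Lambda)\psi_\ep = V$ gives
\[
\sum_{y\in\BZ^d}\|G_y^{(\ep)}\|_2^2 = 4d\,\langle\psi_\ep,(I-\Lambda)\psi_\ep\rangle = 4d\,\langle\psi_\ep,V\rangle - 4d\,\ep\|\psi_\ep\|_2^2 \le 4d\,\langle\psi_\ep,V\rangle,
\]
and the last quantity is uniformly bounded in $\ep$ by the Kipnis--Varadhan spectral estimate underlying the construction of the corrector (the same estimate that is used to produce $\chi$ in Section \ref{sec:corrector}). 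Passing to the $L^2$-limit via Fatou's lemma then yields $\sum_y\|G_{0,y}\|_2^2<\infty$, furnishing the required integrability. With $C\in L^1(P)$ in hand, the two martingale-CLT hypotheses are verified and the quenched weak convergence of $M_n^{(\om)}/\sqrt{n}$ to $N(0,D)$ follows.
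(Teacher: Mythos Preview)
Your proof is correct and follows essentially the same route as the paper: Lindeberg--Feller for martingale differences, with both the conditional-covariance convergence and the Lindeberg condition read off from the ergodic theorem for the environment viewed from the particle (Theorem~\ref{Thm_mutual_ergodic}). The only difference is that you work harder than necessary on the integrability of $C$: since the events $\{y\in N_0(\om)\}$ for $y=ke$, $k\ge 1$, partition $\Om_0$, one has $\sum_{k\ge 1}\|G_{0,ke}\|_2^2=\BE_P\big[|\chi(f_e(\om)e,\om)|^2\big]<\infty$ directly from the $L^2$-construction of $\chi$, so the spectral detour through $\langle\psi_\ep,V\rangle$ is not needed.
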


\begin{proof}
    Let
    \begin{equation}
        V_n^{(\om)}(\ep)=\frac{1}{n}\sum_{k=0}^{n-1}{E_\om\left[D_k^{(\om)} \ind_{\{\min_{i,j}|(D_k^{(\om)})_{i,j}|\geq \ep \sqrt{n} \}}\Big|X_0,X_1,\ldots,X_k\right]},\nonumber
    \end{equation}
    where $D_k^{(\om)}$ is the covariance matrix for $M_{k+1}^{(\om)}-M_k^{(\om)}$. By the Lindeberg-Feller Central Limit Theorem (see for example \cite{Du96}, Theorem 4.5),  it is enough to show that\\
    \begin{enumerate}

        \item $\lim_{n\rightarrow\infty}{V_n^{(\om)}(0)}=D$ in $P_\om$ probability. \\

        \item $\lim_{n\rightarrow\infty}{V_n^{(\om)}(\ep)}=0$ in $P_\om$ probability for every $\ep>0$.\\

    \end{enumerate}
    Both conditions are implied from Theorem \ref{Thm_mutual_ergodic}. Indeed, one can write $V_n^{(\om)}(0)$ as
    \begin{equation}
        V_n^{(\om)}(0)=\frac{1}{n}\sum_{k=0}^{n-1}{h_0\circ\gth_{X_k}(\om)}, \nonumber
    \end{equation}
    where
    \begin{equation}
        h_K(\om)=E_\om\left[D_1^{(\om)} \ind_{\{\min_{i,j}|(D_1^{(\om)})_{i,j}|\geq K \}}\right].\nonumber
    \end{equation}
    Therefore by Theorem \ref{Thm_mutual_ergodic} we have for $P$ almost every $\om\in\Om_0$
    \begin{equation}
        \lim_{n\rightarrow\infty}{V_n^{(\om)}(0)}=\BE\left[h_0(\om)\right]=D.\nonumber
    \end{equation}
    Turning to the second limit, for every $K\in\BR$ and $\ep>0$ it holds that $\ep\sqrt{n}>K$ for sufficiently large $n$, and therefore $f_{\ep\sqrt{N}}\leq f_K$. Consequently, by the Dominated Convergence Theorem
    \begin{equation}
        \limsup_{n\rightarrow\infty}{V_n^{(\om)}(\ep)}\leq \BE\left[D_1^{(\om)} \ind_{\{\min_{i,j}|(D_1^{(\om)})_{i,j}|\geq K \}}\right]\underset{_{K\rightarrow\infty}}{\longrightarrow}~ 0,\quad P~\text{a.s,}\nonumber
    \end{equation}
    where in order to apply the Dominated Convergence Theorem, we used the fact that $M_1^{(\om)}\in L^2$.
\end{proof}

Finally we turn to prove the high dimensional Central Limit Theorem

\begin{proof}[Proof of Theorem \ref{CLT2}]
    Due to Theorem \ref{Modified_CLT} it is enough to prove that for $P$ almost every $\om\in\Om_0$
    \begin{equation}
        \lim_{n\rightarrow\infty}{\frac{\chi(X_n,\om)}{\sqrt{n}}}{\longrightarrow}0,\quad P_\om \text{-in probability}.
    \end{equation}
    This will follow once we show that for some random variable $C=C(\om)$ which is $P$ almost surely finite and positive
    \begin{equation}
        \limsup_{n\rightarrow\infty}{P_\om\left(|\chi(X_n,\om)|>\ep\sqrt{n}\right)}<C\ep^{1/d},\quad \forall \ep>0,~P ~\text{a.s.} \label{final_claim}
    \end{equation}

    Separating the event in \eqref{final_claim} we can bound its probability by
    \begin{equation}
    \begin{aligned}
        P_\om\left(|\chi(X_n,\om)|>\ep\sqrt{n}\right) \nonumber
        \leq P_\om\left(\|X_n\|>\frac{\sqrt{n}}{\ep^{1/d}}\right)+
        P_\om\left(\chi(X_n,\om)>\ep\sqrt{n}~,~\|X_n\|\leq\frac{\sqrt{n}}{\ep^{1/d}}\right)\nonumber
    \end{aligned}
    \end{equation}
    Thus it is enough to deal with each term on the r.h.s separately. For the first term note that by Theorem \ref{asymptotic_X_n} and the Markov inequality, there exists a random variable  $c=c(\om)$, which is $P$ almost surely finite and positive, so that
    \begin{equation}
        P_\om\left[\|X_n\|>\frac{1}{\ep^{1/d}}\sqrt{n}\right]\leq \ep^{1/d}\frac{\BE_\om[\|X_n\|]}{\sqrt{n}}\leq c\ep^{1/d},\quad P ~\text{a.s.}\label{Markov_X_n}
    \end{equation}
    Moving to deal with the second term, by Proposition \ref{bound_of_transitions} we can write
    \begin{equation}
        \begin{array}{rcl}
            P_\om\left(\chi(X_n,\om)>\ep\sqrt{n}~,~\|X_n\|\leq\frac{\sqrt{n}}{\ep^{1/d}}\right)
            &=&\sum_{x\in\CP(\om)}P_\om^n(0,x)\ind_{\left\{|\chi(x,\om)|>\ep\sqrt{n},~x\in\left[-\frac{\sqrt{n}}{\ep^{1/d}},\frac{\sqrt{n}}{\ep^{1/d}}\right]\right\}}\\
            &~&\\
            &\leq & \frac{K}{n^\frac{d}{2}}\sum\limits_{\tiny{\begin{array}{c}x\in \CP(\om)\\ |x|\leq \frac{\sqrt{n}}{\ep^{1/d}}\end{array}}}{\ind_{\left\{\chi(x,\om)>\ep\sqrt{n}\right\}}}\\
            &~&\\
            &=&K\left(\frac{2}{\ep^{1/d}}+\frac{1}{\sqrt{n}}\right)^d\frac{1}{\left(2\frac{\sqrt{n}}{\ep^{1/d}}+1\right)^d}\sum\limits_{\tiny{\begin{array}{c}x\in \CP(\om)\\ |x|\leq \frac{\sqrt{n}}{\ep^{1/d}}\end{array}}}{\ind_{\left\{\chi(x,\om)>\ep^{1+1/d}\frac{\sqrt{n}}{\ep}    \right\}}},\\
        \end{array}
        \nonumber
    \end{equation}
    which by Theorem \ref{sub_linearity_everywhere} yields that
    \begin{equation}
        \limsup_{n\rightarrow\infty}P_\om\left(\chi(X_n,\om)>\ep\sqrt{n}~,~\|X_n\|\leq\frac{\sqrt{n}}{\ep^{1/d}}\right)\leq 2^dK\ep^{1/d}\nonumber
    \end{equation}
    as required.
\end{proof}


\section{Some Conjectures And Questions}\label{sec:conj_and_ques}

While we have full classification of transience-recurrence of random walks on discrete point processes in dimensions $d=1$ and $d\geq 3$, we only have a partial classification in dimension 2. We therefore give the following two conjectures:

\begin{conj}
    There are transient two dimensional random walks on discrete point processes.
\end{conj}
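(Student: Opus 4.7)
The plan is to exhibit an explicit stationary, ergodic $Q$ satisfying Assumption \ref{Assumptions} for which the induced two-dimensional random walk is almost surely transient. The starting point is that the only obstruction in Theorem \ref{tran_recu2} is the Cauchy-tail assumption \eqref{Cauchy_tail_ass}, so any candidate $Q$ must have $f_{e_1}, f_{e_2}$ with tails strictly heavier than $k^{-2}$.

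A natural first attempt is a \emph{product renewal} environment. Let $\mu$ be a probability measure on $\BN^+$ with $\mu([k,\infty))\sim c\, k^{-(\alpha-1)}$ for some $\alpha\in(1,2)$, and let $\mathcal{P}_i\subset\BZ$ be the stationary (size-bias corrected) version of a renewal process with inter-arrival law $\mu$. Take $\CP(\om)=\mathcal{P}_1(\om)\times\mathcal{P}_2(\om)$. This yields a stationary ergodic $Q$ whose $f_{e_i}$ has tail $\sim k^{-(\alpha-1)}$, violating \eqref{Cauchy_tail_ass}. The appeal of this choice is that the conductance network built in the proof of Theorem \ref{tran_recu2} factors as a Cartesian product of two weighted copies of $\BZ$, so one can hope to analyze it explicitly.

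I would then attempt to prove transience via the flow criterion: produce a unit flow from the origin to infinity with finite energy. By \eqref{eq:c(e)_dist} the conductances assigned to unit edges (inside a length-$k$ original edge the conductance is $k$) are themselves heavy-tailed with exponent $\alpha$, so the maximum conductance inside a box of side $2^n$ is of order $2^{n/(\alpha-1)}$. For $\alpha$ close to $1$ this vastly exceeds the $O(n\log n)$ scaling used in the Nash--Williams bound of Theorem \ref{tran_recu2}, and a flow that successively concentrates on these ``freak'' edges at each dyadic scale should have summable resistive energy, in the spirit of the long-range-percolation flows constructed in \cite{Be01}.

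The main obstacle is that producing a globally consistent, energy-finite flow is considerably more delicate than obtaining a Nash--Williams lower bound on cutsets, and the product environment may in fact remain recurrent since a single long edge in direction $e_1$ does not help the walker escape in direction $e_2$. An alternative route, which I would pursue in parallel, is to construct $Q$ so that $\CP(\om)$ quasi-isometrically contains a transient graph (a $3$-regular tree with appropriately spaced levels, or the trace of a supercritical Galton--Watson subtree) and then to invoke Rayleigh monotonicity. The technical crux of either approach is to reconcile the \emph{nested} combinatorial structure needed for transience with the \emph{two-parameter} shift stationarity required by Assumption \ref{Assumptions}.
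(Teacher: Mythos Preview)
The statement you are attempting to prove is a \emph{conjecture} in the paper; the authors present it as an open problem in Section~\ref{sec:conj_and_ques} and provide no proof. So there is nothing in the paper to compare your argument against, and your proposal should be read as an attack on an open problem rather than a reconstruction of an existing proof.

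On the mathematical content: your first approach via a product renewal environment $\CP(\om)=\CP_1\times\CP_2$ fails outright, and not for the delicate reason you suggest. In a product environment the four coordinate-nearest-neighbors of $(x_1,x_2)$ are $(x_1^\pm,x_2)$ and $(x_1,x_2^\pm)$, where $x_i^\pm$ are the neighbors of $x_i$ in $\CP_i$. The bijection sending $(x_1,x_2)$ to its pair of indices in $\CP_1\times\CP_2$ is therefore a graph isomorphism between the walk on $\CP(\om)$ and simple random walk on $\BZ^2$. The walk is \emph{always} recurrent in this class, regardless of how heavy the tail of $\mu$ is; the heavy tails are invisible to the walk because it never sees the metric, only the combinatorics. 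Your parenthetical worry (``the product environment may in fact remain recurrent'') is exactly right and is not a technical obstacle but a structural obstruction. The flow heuristic based on \eqref{eq:c(e)_dist} is misleading here: the large conductances produced by long edges are exactly compensated by the many unit edges in series that they were cut into, so the effective resistance between consecutive points of $\CP_i$ is always $1$.

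Your second approach (force $\CP(\om)$ to contain a rough copy of a transient tree) is too schematic to assess. The real issue, which you identify, is that any hierarchical or tree-like construction must be reconciled with stationarity and ergodicity under both $\gth_{e_1}$ and $\gth_{e_2}$, and you give no mechanism for doing this. As it stands the proposal contains no argument that comes close to settling the conjecture.
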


\begin{conj}
    The condition given in Theorem \ref{tran_recu2}, for recurrence of two-dimensional random walk on discrete point process, i.e., the existence of a constant $C>0$ such that
    \begin{equation}
        \sum_{k=N}^{\infty}{\frac{k\cdot P(f_{e_i}=k)}{\BE(f_{e_i})}}\leq\frac{C}{N},\quad i\in\{1,2\},~N\in\BN
    \end{equation}
    is not necessary.
\end{conj}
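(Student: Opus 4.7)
The plan is to disprove the necessity of \eqref{Cauchy_tail_ass} by constructing a stationary ergodic two-dimensional point process for which the hypothesis fails in some coordinate direction but the induced random walk is nevertheless $\BP$-almost surely recurrent. My candidate is a \emph{layered} environment: for each $y \in \BZ$, let $R_y \subset \BZ$ be an independent copy of an aperiodic stationary renewal point process whose gap distribution satisfies $P(\mathrm{gap} = k) \sim c\, k^{-\alpha}$ for some $\alpha \in (2,3)$, and set $\CP(\om) = \bigcup_{y \in \BZ} R_y \times \{y\}$. The iid structure across rows and the aperiodicity within each row together make this process stationary and ergodic under both coordinate shifts, satisfying Assumption \ref{Assumptions}. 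Under the Palm measure $P$, $f_{e_2}$ is geometric (finite moments, so \eqref{Cauchy_tail_ass} holds for $i = 2$), while $f_{e_1}$ inherits the renewal tail; a direct computation gives $\sum_{k \geq N} k\,P(f_{e_1}=k) \sim C N^{2-\alpha}$, which decays strictly slower than $1/N$, so \eqref{Cauchy_tail_ass} is violated for $i = 1$ and the hypothesis of Theorem \ref{tran_recu2} genuinely fails.

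The key step is to prove recurrence directly, since the Nash--Williams cutset argument of Theorem \ref{tran_recu2} breaks down in precisely this regime. I would exploit the fact that the walk decouples into two one-dimensional subwalks: the $y$-coordinate of $X_n$ changes only during $e_2$-steps (by $\pm f_{e_2}$, geometric), while the $x$-coordinate changes only during $e_1$-steps, by $\pm f_{e_1}$ drawn from the renewal law of the currently visited row. Because the rows $R_y$ are iid, the $x$-increments are, conditionally on the order in which rows are visited, a sum of heavy-tailed but finite-mean iid random variables, and for $\alpha > 2$ the walk still sits in the Gaussian domain of attraction. I would then adapt Proposition \ref{bound_of_transitions} to $d = 2$ (the isoperimetric Lemma \ref{lem_volume_surface} is valid for all $d$) to obtain the on-diagonal upper bound $p^n_\om(0,0) \leq K/n$, together with a matching lower bound $p^n_\om(0,0) \geq c(\om)/n$ from a local CLT analogous to Theorem \ref{Modified_CLT}, but with Assumption \ref{assumption3} relaxed to $\BE_P[f_{e_1}^{1+\delta}] < \infty$ (satisfied for any $\delta < \alpha - 2$). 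Together these give $\sum_n p^n_\om(0,0) = \infty$, from which recurrence follows by the standard Borel--Cantelli / second-moment argument.

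The main obstacle is the sharpness of the constraint $\alpha > 2$: for $\alpha < 2$ the $x$-steps lie in the domain of attraction of an $\alpha$-stable law and a two-dimensional walk with $\alpha$-stable steps is transient, so the counterexample must live just above the stability threshold, where finite-variance arguments barely fail. Establishing the local CLT in this borderline regime requires genuinely relaxing the $2 + \varepsilon$-moment condition of Assumption \ref{assumption3}, most likely by rebuilding the corrector of Section \ref{sec:corrector} under a weaker integrability hypothesis and by sharpening the essential-sublinearity estimate used in the passage from Theorem \ref{Modified_CLT} to Theorem \ref{CLT2}. An alternative route that avoids the corrector entirely would be to refine Nash--Williams itself: choose cutsets adaptively to avoid the $O(1)$ exceptionally long edges per scale (whose presence is precisely what breaks the Cauchy-tail bound) and use concentration to show that the ``typical'' cutset conductance still grows only like $n \log n$. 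Either approach is expected to require technical input beyond what is developed in this paper.
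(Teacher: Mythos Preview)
This statement is a \emph{conjecture} in the paper (Section~\ref{sec:conj_and_ques}); the paper offers no proof, so there is nothing to compare your attempt against. What you have written is not a proof but a research programme toward the conjecture, and you say as much in your final sentence. As a programme it is reasonable, but there are two concrete issues worth flagging.

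First, your claim that ``for $\alpha>2$ the walk still sits in the Gaussian domain of attraction'' because ``the $x$-increments are \ldots\ a sum of heavy-tailed but finite-mean iid random variables'' is misleading. For $P(\text{gap}=k)\sim k^{-\alpha}$ with $\alpha\in(2,3)$ the gap has \emph{infinite} variance, so an iid sum of such increments is in the domain of attraction of an $(\alpha-1)$-stable law, not a Gaussian. What actually makes the horizontal displacement diffusive is the mechanism behind Theorem~\ref{CLT1}: within a fixed row the walk is, in index coordinates, a simple random walk, and the spatial position of the $S_m$-th renewal point is $S_m\cdot\BE_P[f_{e_1}]+o(|S_m|)$ by the ergodic theorem. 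This is a very different reason from the one you give, and it breaks down as soon as the walk jumps between rows, since the index within the new row bears no simple relation to the index in the old one. Your decoupling picture is therefore too coarse to carry the argument.

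Second, both of your proposed routes to recurrence are genuinely open at the level of this paper. The local-CLT route requires rebuilding the corrector of Section~\ref{sec:corrector} without square integrability of $V$ (since $\BE_P[f_{e_1}^2]=\infty$), which is not a technical refinement but a structural obstacle: the Kipnis--Varadhan construction uses $V\in L^2$ in an essential way. Even granting a CLT, upgrading it to the pointwise lower bound $p_\om^n(0,0)\geq c(\om)/n$ is a separate and substantial step not addressed anywhere in the paper. The adaptive Nash--Williams route is more promising for this specific layered example (the vertical edges already have Cauchy tails, and the problematic long horizontal edges are sparse and row-localised), but ``choose cutsets to avoid them'' is a heuristic, not an argument; you would need to control how much the cutset lengthens when routed around a long edge and show that the trade-off still yields divergent $\sum_n C_{\Pi_n}^{-1}$. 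In short: your candidate example is well chosen and the conjecture is plausible for it, but neither route you sketch is close to a proof.
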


In Theorem \ref{CLT2} we gave conditions for the random walk on discrete point processes to satisfy a Central Limit Theorem. However, we didn't give any example for a random walk without a Central Limit Theorem. We therefore give the following conjecture:

\begin{conj}
    There are random walks on discrete point processes in high dimensions that don't satisfy a Central Limit Theorem.
\end{conj}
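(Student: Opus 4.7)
The plan is to construct an explicit point process in dimension $d \geq 2$ whose associated random walk fails the CLT. The natural strategy is to violate Assumption \ref{assumption3} in a strong way, arranging that $f_e$ has infinite variance in one coordinate direction. Concretely, I would work in $d = 2$ with a translation-invariant ergodic $Q$ on $\{0,1\}^{\BZ^2}$ whose row-wise gap distribution in the $e_1$-direction has tail $P(f_{e_1} = n) \sim c\, n^{-1-\alpha}$ for some $\alpha \in (1,2)$, so that $\BE_P[f_{e_1}]$ is finite (as must hold by the argument in Section \ref{sec:LLN}) but $\BE_P[f_{e_1}^2] = \infty$. A first candidate is to stack independent stationary delayed renewal processes along horizontal lines, chosen so that the joint measure is genuinely stationary under both $\gth_{e_1}$ and $\gth_{e_2}$.

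Given such a $Q$, I would analyze the $e_1$-projection $Y_n = X_n \cdot e_1$. At each step, with probability $\tfrac{1}{2}$ the walker moves vertically (contributing $0$ to $Y_n$), and with probability $\tfrac{1}{2}$ it makes a horizontal jump of magnitude $f_{\pm e_1}(\gth_{X_n}\om)$. Theorem \ref{Thm_mutual_ergodic} ensures that the empirical distribution of horizontal jump sizes converges to the $P$-law of $f_{e_1}$, which is in the domain of attraction of a symmetric $\alpha$-stable law. Combined with a stopping-time decomposition that records only the horizontal steps and a subordinating time change reflecting their asymptotic density, one expects $n^{-1/(2\alpha)} Y_n$ to converge in distribution to a nondegenerate $\alpha$-stable random variable, which would rule out any $\sqrt{n}$-Gaussian scaling limit for $X_n$.

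The technical backbone would be a stable-law invariance principle for the ergodic sequence of horizontal jumps seen from the walker. One clean route is a martingale-plus-coboundary decomposition in the spirit of Section \ref{sec:corrector}, but adapted to the $\alpha$-stable regime rather than the $L^2$ regime: write $Y_n = M_n + R_n$ where $M_n$ has heavy-tailed martingale increments and $R_n$ is of lower order, then apply a Durrett-Resnick style functional CLT for martingales attracted to stable laws. Alternatively one may try to mimic the one-dimensional argument from Section \ref{sec:one_dim_CLT}: observe that each horizontal move of the walk traverses precisely one gap of the underlying 1D stationary renewal process on the current row, and invoke classical stable-law limit theorems for partial sums of heavy-tailed stationary sequences.

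The main obstacle lies in this last step. Although the $e_1$-projection morally behaves like a stretched one-dimensional walk, successive horizontal jumps are interlaced with vertical excursions that re-randomize the environment seen from the walker, so the horizontal jump sizes are not independent but dependent through the joint law of horizontal and vertical gaps. Controlling this dependence carefully enough to preserve the $\alpha$-stable tails, without inadvertently averaging them into a finite-variance limit, is the crux of the proof. A secondary difficulty is merely producing a concrete heavy-tailed $Q$ which is jointly stationary and ergodic under both translations $\gth_{e_1}$ and $\gth_{e_2}$: a naive product of renewal processes across rows requires careful Palm-type initialization to be $\gth_{e_1}$-stationary, and ergodicity in the vertical direction must be checked rather than assumed.
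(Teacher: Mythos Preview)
The statement you are trying to prove is not proved in the paper. It appears in Section~\ref{sec:conj_and_ques} as an open conjecture; the authors explicitly say they ``didn't give any example for a random walk without a Central Limit Theorem.'' So there is no proof in the paper to compare your proposal against.

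As for the proposal itself, it is a reasonable heuristic program rather than a proof, and you candidly flag the two real difficulties. Two further comments. First, you should keep Theorem~\ref{CLT1} firmly in view: in one dimension the CLT holds with diffusion constant $\BE_P[f_1]^2$ \emph{regardless} of whether $f_{e_1}$ has finite variance. This is precisely because consecutive horizontal steps retrace the same gaps and cancel, so that $X_n=\mathfrak p_{Y_n}$ with $Y_n$ a simple random walk. Any argument in $d\geq 2$ must therefore exploit something genuinely multi-dimensional --- namely that vertical moves carry the walker to fresh rows where the horizontal gaps are new, approximately independent heavy-tailed draws --- and you will need to quantify that the walker changes rows often enough to see of order $n$ essentially fresh horizontal gaps rather than $O(\sqrt n)$ as in one dimension. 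Without this, the same cancellation mechanism that rescues the CLT in $d=1$ could rescue it here. Second, your claimed scaling $n^{-1/(2\alpha)}$ for $Y_n$ looks wrong: if the horizontal increments genuinely behave like order-$n$ many (approximately independent) symmetric variables in the domain of attraction of an $\alpha$-stable law with $\alpha\in(1,2)$, the correct normalization is $n^{-1/\alpha}$, and this is exactly what would contradict a $\sqrt n$ Gaussian limit.
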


In the proof of Theorem \ref{CLT2} we used the additional assumption that there exists $\ep_0>0$ such that $E_P[f_e^{2+\ep_0}]<\infty$ for every  $e\in\CE$. The assumption that the second moments are finite, is fundamental in our CLT proof in order to build the corrector, and seems to be necessary for the CLT to hold. On  the other hand, existence of such $\ep_0>0$ though needed in our proof, was used only in order to bound \eqref{distance_prop_3}. We therefore give the following  conjecture:

\begin{conj}
    Theorem \ref{CLT2} is true even with the weaker assumption that only the second moments are finite.
\end{conj}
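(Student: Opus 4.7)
The $(2+\ep_0)$-moment hypothesis \ref{assumption3} enters the entire proof of Theorem \ref{CLT2} at a single location: the derivation of the uniform bound \eqref{distance_prop_3} in Proposition \ref{prop:asymptotic}, where the Nevo--Stein ergodic theorem forces $f_e^2\in L^p(P)$ for some $p>1$. The plan is to bypass this uniform bound by a Cesaro-averaged one that is valid under only the second-moment assumption, and then to adapt the downstream arguments to cope with the small loss this entails.

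The first step is to observe that $\Lambda$ is a reversible (hence self-adjoint, $L^1$-contractive) Markov operator on $L^p(P)$ for every $p\in[1,\infty]$, and is ergodic by Proposition \ref{ergodic_particle_view}. Birkhoff's pointwise ergodic theorem, applied to $g(\om)=\tfrac{1}{2d}\sum_{e\in\CE}f_e^2(\om)\in L^1(P)$, then yields
\[
\sup_{n\geq 1}\frac{1}{n}\sum_{k=0}^{n-1}E_\om\bigl[g\circ\gth_{X_k}\bigr]<\infty \quad\text{for $P$-almost every $\om$,}
\]
which is precisely a Cesaro replacement for the pointwise-in-$n$ bound that was used to establish \eqref{distance_prop_3}.

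The second step is to re-run the proof of Theorem \ref{asymptotic_X_n} with this averaged bound. Carrying $\upsilon_3(k)=E_\om[g\circ\gth_{X_k}]$ as a time-dependent quantity, the Cauchy--Schwarz argument producing \eqref{distance_prop_4} still gives $(M(k+1)-M(k))^2\leq C\upsilon_3(k)(Q(k+1)-Q(k))$, and a second Cauchy--Schwarz in $k$ combined with the first step produces $M(n)\leq C(\om)\sqrt{n\,Q(n)}$. Combined with the lower bound \eqref{distance_prop_2} this forces $Q(n)=\tfrac{d}{2}\log n+O(\log\log n)$ and yields only the weakened preliminary estimate $E_\om[\|X_n\|]\leq C(\om)\sqrt{n\log n}$.

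The third, and hardest, step is to remove this parasitic $\sqrt{\log n}$, since the proof of Theorem \ref{CLT2} splits $\{|\chi(X_n,\om)|>\ep\sqrt{n}\}$ using Markov's inequality on $E_\om[\|X_n\|]$. One route is a bootstrap, feeding the preliminary estimate into a sharper \emph{upper} bound on $Q(n)$ via localization of the support of $g_n$ to a box of radius $\sqrt{n\log n}$, and iterating. A cleaner alternative is to abandon the distance-entropy machinery altogether in favor of a direct variance bound on the martingale $M_n^{(\om)}=X_n+\chi(X_n,\om)$: its martingale differences have conditional variances equal to $(g+\phi)\circ\gth_{X_k}$ where $\phi(\om)=\tfrac{1}{2d}\sum_{e\in\CE}\|\chi(f_e(\om)e,\om)\|^2\in L^1(P)$ by \eqref{square_integrability}, so Step~1 immediately gives $E_\om[\|M_n^{(\om)}\|^2]\leq C(\om)n$ and hence tightness of $M_n^{(\om)}/\sqrt n$ under $P_\om$. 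Transferring this tightness to $X_n/\sqrt n$ is then the main obstacle: the essential sublinearity of $\chi$ (Theorem \ref{sub_linearity_everywhere}) is only a density statement on deterministic boxes, and combining it with tightness of $M_n^{(\om)}$ seems to require a self-consistency argument using the heat-kernel bound of Proposition \ref{bound_of_transitions} to control how often $X_n$ lands in the exceptional set where $\chi$ fails to be sublinear.
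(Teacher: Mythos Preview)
The statement you are addressing is a \emph{conjecture} in the paper, not a theorem; the authors explicitly leave it open in Section~\ref{sec:conj_and_ques} and do not offer a proof. So there is no ``paper's own proof'' to compare against, and your proposal has to be judged on whether it actually closes the gap.

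It does not. Your Steps~1 and~2 are sound: the Hopf--Dunford--Schwartz ergodic theorem for the $L^1$-contraction $\Lambda$ does give $\sup_n \tfrac{1}{n}\sum_{k<n}(\Lambda^k g)(\om)<\infty$ a.s.\ for $g=\tfrac{1}{2d}\sum_e f_e^2\in L^1(P)$, and carrying a time-dependent $\upsilon_3(k)$ through the Cauchy--Schwarz step legitimately yields $M(n)\le C(\om)\sqrt{n\log n}$. Similarly, the martingale variance bound $E_\om[\|M_n^{(\om)}\|^2]\le C(\om)n$ follows cleanly from the Cesaro estimate together with \eqref{square_integrability}. But your Step~3 is not a proof --- it is a description of the remaining obstacle. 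The entire difficulty of the conjecture is concentrated in the sentence ``Transferring this tightness to $X_n/\sqrt n$ is then the main obstacle,'' and you end the argument precisely there.

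Concretely: in the paper's proof of Theorem~\ref{CLT2}, the bound $E_\om[\|X_n\|]\le c(\om)\sqrt n$ is used \emph{only} to control $P_\om(\|X_n\|>\ep^{-1/d}\sqrt n)$ via Markov. With your $\sqrt{n\log n}$ bound this probability is of order $\ep^{1/d}\sqrt{\log n}$, which blows up. The alternative you sketch --- tightness of $M_n/\sqrt n$ plus essential sublinearity of $\chi$ --- runs into a circularity: $X_n=M_n-\chi(X_n,\om)$, so locating $X_n$ in a box where Theorem~\ref{sub_linearity_everywhere} applies already requires control of $\chi(X_n,\om)$. The heat-kernel bound of Proposition~\ref{bound_of_transitions} lets you control the \emph{mass} $X_n$ places on the exceptional set inside a given box, but it does not by itself tell you which box $X_n$ lives in. The ``self-consistency argument'' you allude to would have to break this circle, and you have not indicated how; absent that, the proposal is a promising outline rather than a proof, and the conjecture remains open.
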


Even if the Theorem is true with the weaker assumption that only the second moment of the distances between points is finite, we can still ask the following question:

\begin{ques}
    Can one find examples for random walks on discrete point processes that satisfy a Central Limit Theorem in high dimensions but don't have all of their second moments finite?
\end{ques}

We also have the following conjecture about the Central Limit Theorem:

\begin{conj}
	Theorem \ref{CLT2} can be strengthened as follows: Let $(\Omega,\mathcal{B},Q)$ be a 		$d$-dimensional discrete point process satisfying assumptions \ref{Assumptions} and 		\ref{assumption3}. Then for $P$ almost every $\om\in\Om_0$ the random walk satisfies an 	invariance principle (i.e., converges to Brownian motion under appropriate scaling). 
\end{conj}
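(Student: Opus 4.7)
My plan is to mirror the martingale-plus-corrector strategy of Berger-Biskup (2007) and Mathieu-Piatnitski (2007) for the supercritical percolation cluster, building on the machinery of Sections \ref{sec:corrector}--\ref{sec:high_dim_CLT}. Writing the rescaled process as
\[
W_n(t) := \frac{X_{\lfloor nt\rfloor}}{\sqrt n} = \frac{M_{\lfloor nt\rfloor}^{(\om)}}{\sqrt n} - \frac{\chi(X_{\lfloor nt\rfloor},\om)}{\sqrt n},
\]
with $M_n^{(\om)} = X_n + \chi(X_n,\om)$ the corrector-modified martingale, the invariance principle splits into (i) a functional CLT for the first term in the Skorohod space $D([0,T],\BR^d)$, and (ii) uniform-in-$t$ smallness of the second.

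For (i), I would apply the vector-valued Lindeberg-Feller martingale functional CLT to $\{M_n^{(\om)}\}$. Its two hypotheses -- convergence of the conditional quadratic variation
\[
\frac{1}{n}\sum_{k=0}^{\lfloor nt\rfloor-1} E_\om\!\left[\Delta M_k^{(\om)}(\Delta M_k^{(\om)})^T \,\Big|\, \si(X_0,\ldots,X_k)\right] \to tD,
\]
and the Lindeberg jump condition -- are exactly the two limits already verified at $t=1$ in the proof of Theorem \ref{Modified_CLT}. Theorem \ref{Thm_mutual_ergodic} together with monotonicity in $t$ upgrades both to the uniform-in-$t$ statements required by the functional theorem, yielding $n^{-1/2}M^{(\om)}_{\lfloor n\cdot\rfloor} \Rightarrow B$ in $D([0,T],\BR^d)$ for $P$-a.e.\ $\om$, where $B$ is Brownian motion with covariance matrix $tD$.

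For (ii), the pointwise convergence $\chi(X_n,\om)/\sqrt n \to 0$ already appears in the proof of Theorem \ref{CLT2}; promoting it to the supremum over $k \leq Tn$ is the substantive step. The plan is to combine three ingredients already present in the paper: (a) Doob's $L^2$ maximal inequality for $M_k^{(\om)}$, which gives $\max_{k\leq Tn}\|M_k^{(\om)}\| = O_{P_\om}(\sqrt n)$ and, together with a short bootstrap through Theorem \ref{sub_linearity_everywhere}, confines the walk to $B_{R\sqrt n}(0)$ with probability at least $1-\eta$; (b) an $\ell^2$-refinement of Theorem \ref{sub_linearity_everywhere} via dyadic decomposition of the level sets of $|\chi(\cdot,\om)|$, giving the box-averaged second-moment bound $|B_{R\sqrt n}|^{-1}\sum_{x \in \CP(\om) \cap B_{R\sqrt n}}|\chi(x,\om)|^2 = o(n)$; and (c) the heat-kernel bound $p_\om^k(0,x) \leq K k^{-d/2}$ from Proposition \ref{bound_of_transitions}. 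Combining (b) and (c) furnishes quenched second-moment bounds on the corrector along the walk, and a chaining argument along a dyadic grid of times, with Doob's inequality on the stretches of $M_k^{(\om)}$ handling the excursions between grid points, upgrades these to $\max_{k\leq Tn}|\chi(X_k,\om)|/\sqrt n \to 0$ in $P_\om$-probability. Slutsky's theorem in $D([0,T],\BR^d)$ then closes the proof.

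The main obstacle is closing the chaining estimate when $d=2$: there the heat kernel decays only like $k^{-1}$ and the naive chaining sum is borderline divergent. One must replace it with a finer argument that couples the strong sublinearity along coordinate directions (Theorem \ref{coordinate_sublinearity}) with the Nevo-Stein ergodic theorem invoked in Section \ref{sec:asymptotice_behavior}, in order to control the second moments of the jump lengths $f_e$ averaged along the trajectory -- exactly as in Proposition \ref{prop:asymptotic}. This $d=2$ refinement is the genuinely novel step relative both to Theorem \ref{CLT2} and to the bounded-step Berger-Biskup framework, and is where I expect the real difficulty to lie.
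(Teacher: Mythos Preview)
The statement you address is a \emph{conjecture} in the paper (Section~\ref{sec:conj_and_ques}); the authors do not prove it, so there is no proof of theirs to compare your proposal against. What you have written is an attempt to settle an open problem they pose, not a reconstruction of a known argument.

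Your decomposition and the functional CLT for the martingale part are standard and should go through as you say; the difficulty is entirely in step~(ii), and there your sketch has a genuine gap. The claimed $\ell^2$-refinement of Theorem~\ref{sub_linearity_everywhere} does \emph{not} follow from a dyadic decomposition: that theorem says only that for each fixed $\ep>0$ the density of sites $x\in[-n,n]^d\cap\CP(\om)$ with $|\chi(x,\om)|\geq\ep n$ is eventually at most~$\ep$, but it gives no information when $\ep\geq 1$, hence no control on sites where $|\chi(x,\om)|$ is a large multiple of~$n$. Summing the dyadic shells where the theorem does apply already yields only $O(n)$, not $o(n)$, and the uncontrolled tail can dominate. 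In the bounded-step literature (Berger--Biskup, Biskup--Prescott, Mathieu--Piatnitski) the invariance principle is obtained by first upgrading sublinearity on average to \emph{sublinearity everywhere}, $\max_{x\in\CP(\om),\,|x|\leq n}|\chi(x,\om)|=o(n)$, via a harmonicity/heat-kernel argument; that upgrade is exactly what is missing here, and in this model it is not routine, because the unbounded step lengths deprive you of the deterministic bound $\max_{k\leq n}\|X_k\|\leq n$ that closes the bootstrap in the percolation setting (the paper supplies only $E_\om[\|X_n\|]\leq c(\om)\sqrt n$). Your diagnosis that $d=2$ is the delicate case is thus somewhat misplaced: the missing ingredient is sublinearity everywhere, and the gap is present in every dimension $d\geq 2$.
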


Our model describes non nearest neighbors random walk on random subset of $\BZ^d$ with uniform transition probabilities. We suggest the following generalization  of the model:

\begin{ques}
    Fix $\alpha\in\BR$. We look on the same model for the environments with transition probabilities as follows: for $\om\in\Om_0$
    \begin{equation}
        P_\om(X_{n+1}=u|X_n=v)=\left\{
        \begin{array}{cc}
            0&~~~u\notin N_v(\om) \\
            \frac{1}{Z(v)}\|u-v\|^\alpha&~~~u\in N_v(\om)
        \end{array}
        \right. ,
    \end{equation}
    where $Z(v)$ is normalization constant (The case $\alpha=0$ is the uniform distribution case). Which of the Theorems proved in this paper can be generalized to the extended model?
\end{ques}


\bigskip
\bigskip

\paragraph{Acknowledgements.}
The authors would like to thank an anonymous referee for the careful reading of this paper and many helpful comments. Research of N. B. and R.R. was partially supported by ERC StG grant 239990.

\bigskip


\bibliography{pointproc}
\bibliographystyle{alpha}


\end{document}